\def\R{\mathbb{R}}
\def\S{\mathbb{S}}
\def\Z{\mathbb{Z}}
\def\d{|\nabla|}
\def\p{\partial}
\def\vo{\vspace{1\baselineskip}}
\def\be{\begin{equation}}
\def\ee{\end{equation}}
\newtheorem{theorem}{Theorem}[section]
\newtheorem{lemma}{Lemma}[section]
\newtheorem{proposition}{Proposition}[section]
\theoremstyle{definition}
\theoremstyle{remark}
\numberwithin{equation}{section}
\begin{document}
 \title[3D Relativistic Vlasov-Maxwell]{Global solution of the $3D$  relativistic Vlasov-Maxwell system for the large radial data}
\author{Xuecheng Wang}
\address{YMSC, Tsinghua  University, Beijing, China,  100084
}
\email{xuecheng@tsinghua.edu.cn,\quad xuecheng.wang.work@gmail.com 
}
 
\thanks{}

\maketitle

\begin{abstract}

We prove global existence of  the $3D$ relativistic Vlasov-Maxwell system for a class of arbitrary large regular initial data with spherical symmetry, in which the initial distribution function of particles is assumed to decay fast but polynomially towards infinity.

\end{abstract} 
 
\section{Introduction}
 The  $3D$ relativistic  Vlasov-Maxwell system is one of the   fundamental models in the collisionless plasma physics. It reads as follows, 
\be\label{mainequation}
(\textup{RVM})\qquad \left\{\begin{array}{c}
\p_t f + \hat{v} \cdot \nabla_x f + (E+ \hat{v}\times B)\cdot \nabla_v f =0,\\
\nabla \cdot E = 4 \pi \displaystyle{\int_{\R^3}  f(t, x, v) d v}, \qquad \nabla \cdot B =0, \\
\p_t E = \nabla \times B - 4\pi \displaystyle{\int_{\R^3} f(t, x, v) \hat{v} d v }, \quad \p_t B  =- \nabla\times E,\\
f(0,x,v)=f_0(x,v), \quad E(0,x)=E_0(x), \quad B(0,x)=B_0(x).
\end{array}\right.
\ee
where $f:\R_t\times \R_x^3\times \R_v^3\longrightarrow \R_{+}$ denotes the distribution function of particles, $E, B :\R_t\times \R_x^3 \longrightarrow \R^3 $ denote  the electromagnetic field,  $\hat{v}:=v/\sqrt{1+|v|^2}$. As a result of direct computations, 
we can reduce the Maxwell system into the standard wave equations as follows, 
\be\label{electromagnetic}
\p_t^2 B- \Delta B =  { -4\pi\int_{\R^3} \hat v\times \nabla_x f(t, x, v) d v},\quad \p_t^2 E- \Delta E = - 4\pi \int_{\R^3}  \hat{v} \p_t f(t, x, v)   d v- 4\pi \int_{\R^3} \nabla_x f(t, x , v) d v.
\ee 
The following conservation law holds for the RVM system,
 \be\label{conservationlaw}
\mathcal{H}(t):= \int_{\R^3}|E^2(t,x)| + |B^2(t,x)| d x + 8\pi  \int_{\R^3} \int_{\R^3} \sqrt{1+|v|^2} f(t,x,v)  d v = \mathcal{H}(0)
\ee
Moreover for any $p\in [1, \infty]$, we have
 \be\label{conservationlaw2}
\| f(t,x,v)\|_{L^p_{x,v}} = \| f(0,x,v)\|_{L^p_{x,v}}.
\ee
 Note that the  following system of equations satisfied by the backward characteristics, 
 \be\label{characteristicseqn}
\left\{\begin{array}{l}
\displaystyle{\frac{d}{d s} X(s;t, {x}_0, {v}_0 ) = \widehat{V} (s;t,{x}_0, {v}_0 ) } \\ 
\displaystyle{\frac{d}{d s} V(s;t,{x}_0, {v}_0 ) = K(s, X(s;t,{x}_0, {v}_0 ), V(s;t, {x}_0, {v}_0))  } \\ 
 X(t;t,{x}_0, {v}_0 )= {x}_0,  V(t;t,{x}_0, {v}_0 )= {v}_0,\\
\end{array}\right. 
\ee 
 where $K(s,x,v):=E(s,x)+\hat{v}\times B(s,x)$. Due to the transport nature of  the Vlasov equation,  we have $f(t,x_0,v_0)=f(s, X(s;t,{x}_0, {v}_0 ) ,  V(s;t,{x}_0, {v}_0 ) )$ for any $s\in (-T, T)$, where $T$ denotes the maximal time of existence.  

There is a large literature in the study of the   Vlasov-Maxwell system.  Large data global solutions for the RVM have been constructed  in  two dimensions  and two and half dimensions, we refer readers to \cite{glasseys1,luk} and reference therein for more comprehensive introduction and more details. Moreover,  global solutions for the RVM in the three dimensions  for small data have also been constructed, we refer readers to \cite{glassey2,schaeffer1,wang} for more details.

  In this paper,    we are mainly  interested in the large data Cauchy problem of RVM in the  $3D$ case. The  result  of  Glassey-Strauss \cite{glassey3} says that the  classical solution can be globally  extended  as long as the particle density has compact support in $v$ for all the time. A new proof of this result based on Fourier analysis was   given by Klainerman-Staffilani \cite{Klainerman3}, which adds a new perspective to the study of $3D$ RVM system, see also \cite{glasseys2, pallard2,alfonso}.  An interesting line of research is the continuation criterion for the global existence of the Vlasov-Maxwell system. In \cite{glassey4}, Glassey-Strauss showed that the lifespan of the solution of the relativistic Vlasov-Maxwell system can be continued if the initial data decay at rate $|v|^{-7}$ as $|v|\rightarrow \infty$ and   $\| (1+|v|) f(t,x,v)\|_{L^\infty_x L^1_v}$ remains bounded for all time.
 An improvement of this result and a new continuation criterion was given by Luk-Strain \cite{luk}, which says that a regular solution can be extended as long as $\|(1+|v|^2)^{\theta/2} f(t,x,v)\|_{L^q_x L_v^1}$ remains bounded for $\theta > 2/q, 2< q \leq +\infty$, see also Kunze\cite{kunze}, Pallard\cite{pallard3}, and Patel\cite{patel} for the recent improvements  on the continuation criterion. 

We assume that the initial data  $(f_0(x,v), E_0(x), B_0(x))$ are spherically symmetric in the following sense, 
\be\label{radialsym}
f_0(Rx, Rv) = f_0( x,v), \quad  E_0(  Rx)= RE_0(x), \quad B_0( Rx)= RB_0(x),  \quad \forall R\in SO(3). 
\ee
As a result of direct computation and the uniqueness of solution, we know that the above radial   symmetry property can be propagated from the initial data.  Our main result in this paper is summarized as follows.
 \begin{theorem}\label{maintheorem}
Assume that  initial data $(f_0, E_0, B_0)$ are radial  \textup{(}in the sense of \textup{(\ref{radialsym})}\textup{)}$,  f_0(x,v)\in H^s(\R_x^3 \times \R_v^3)$, $E_0, B_0\in H^s(\R^3)$, $s\in \mathbb{Z}_{+},s\geq 6$. Moreover,  for  $N_0:= 10^{8}$,  we assume that the following estimate holds for the initial distribution function $f_0(x,v),$
\be\label{assumptiononinitialdata}
\sum_{\alpha \in \mathbb{Z}_{+}^6,|\alpha|\leq s} \|(1+|x|+|v|)^{N_0}\nabla_{x,v}^\alpha f_0(x,v)\|_{L^2_{x,v}}  < +\infty. 
\ee
Then the relativistic Vlasov-Maxwell system (\ref{mainequation})   admits global solution $(f(t), E(t), B(t))$ in $ H^s(\R_x^3 \times \R_v^3)\times H^s(\R^3_x)\times H^s(\R^3_x)$. 

\end{theorem}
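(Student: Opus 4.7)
The plan is to exploit spherical symmetry as strongly as possible to reduce the system to an effective electrostatic one, and then to close a velocity-moment bound via conservation of angular momentum along characteristics, in order to trigger a Luk--Strain-type continuation criterion.

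First, I would observe that $B\equiv 0$ throughout the evolution. The condition $B_0(Rx)=RB_0(x)$ for all $R\in SO(3)$ forces $B_0$ to be a smooth radial vector field, i.e.\ $B_0(x)=b_0(|x|)\,x$; imposing $\nabla\cdot B_0=0$ gives $rb_0'+3b_0=0$, hence $b_0\equiv 0$ and $B_0\equiv 0$. Since radial symmetry is propagated by uniqueness, $E(t,x)=e(t,|x|)\,x$ is a radial vector field at all times, so $\nabla\times E=0$ and $\partial_tB=-\nabla\times E=0$. The system thus reduces to an effective relativistic Vlasov--Poisson system with the explicit Gauss-law representation
\begin{equation*}
E(t,x)=\frac{x}{|x|^3}\int_{|y|\le|x|}\rho(t,y)\,dy,\qquad \rho(t,x):=\int_{\R^3}f(t,x,v)\,dv.
\end{equation*}

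Second, I would extract the two structural a priori controls provided by this reduced picture. Along any characteristic $(X(s),V(s))$, with $B\equiv 0$ one computes
\begin{equation*}
\frac{d}{ds}\bigl(X\times V\bigr)=\hat{V}\times V+X\times E(s,X)=0,
\end{equation*}
so the per-particle angular momentum $L:=X(s)\times V(s)$ is conserved, yielding the centrifugal lower bound $|X(s)|\ge |L|/|V(s)|$ for characteristics with $|L|>0$; simultaneously, the total energy identity \eqref{conservationlaw} keeps $\int\int\sqrt{1+|v|^2}\,f\,dv\,dx$ uniformly bounded in time.

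Third, and most delicately, I would use these two facts to verify a Luk--Strain-type continuation criterion. For $|L|>0$, the centrifugal barrier keeps $|E(s,X(s))|\lesssim \|\rho\|_{L^1}/|X(s)|^2$ bounded along the trajectory by a power of $|L|^{-1}$ times a moment of $f$, so integration in $s$ yields a polynomial-in-$t$ bound on $|V(s)|$. For radial characteristics ($L=0$), the smoothness of $\rho$ at the origin, inherited from the rapid polynomial decay \eqref{assumptiononinitialdata} of $f_0$ via the characteristic representation $f(t,x,v)=f_0(X(-t),V(-t))$, implies $\int_{|y|\le r}\rho(t,y)\,dy=O(r^3)$ as $r\to 0$, so $|E|$ stays bounded along such trajectories as well. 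Combining the two cases should give a polynomial-in-$t$ bound on $\|(1+|v|)^\theta f(t)\|_{L^q_xL^1_v}$ within the Luk--Strain range $\theta>2/q$, $2<q\le\infty$, ruling out finite-time blow-up by \cite{luk}. Once this moment bound is in place, $H^s$ regularity is propagated by standard weighted energy estimates for the Vlasov equation and the reduced wave system \eqref{electromagnetic}; the huge decay exponent $N_0=10^{8}$ in \eqref{assumptiononinitialdata} is designed to absorb all losses from vector-field commutators.

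The main obstacle is the uniform treatment of characteristics with small but nonzero $|L|$: such particles can pass very close to the origin, where the centrifugal bound $|L|/|V|$ interacts badly with the $1/|x|^{2}$ singularity of $E$, and a naive estimate would blow up like $|L|^{-2}$. Closing the argument therefore requires a quantitative cancellation between the near-origin vanishing of the charge-inside-sphere $\int_{|y|\le r}\rho\,dy$ and the singular kernel, together with a bootstrap that ties the $L^\infty$ velocity moment $P(t)=\sup_{s\le t}|V(s)|$ to the conserved angular momentum of each characteristic in a way that avoids any loss depending on $|L|^{-1}$ in the final estimate.
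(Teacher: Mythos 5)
Your opening observation is correct, and it is genuinely different from anything the paper does: under the equivariance assumption (\ref{radialsym}), $B_0(Rx)=RB_0(x)$ for all $R\in SO(3)$ forces $B_0(x)=b_0(|x|)x$, and $\nabla\cdot B_0=0$ together with $H^s$ regularity at the origin forces $B_0\equiv 0$; since the symmetry propagates and an equivariant electric field $E(t,x)=e(t,|x|)x$ is curl-free, $\p_t B=-\nabla\times E=0$ gives $B\equiv 0$ for all time, and Gauss's law then yields the explicit radial representation of $E$. In other words, the theorem as stated is really a statement about the relativistic Vlasov--Poisson system (the paper itself never exploits this reduction and instead runs the full Maxwell machinery: propagation of the moment $M_{N_1}(t)$, the Glassey--Strauss decomposition, the radial averaging lemma, and the Fourier/normal-form smoothing estimates of Sections 4--5, closed by the bootstrap of Proposition \ref{mainprop}). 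Your conservation of $L=X(s)\times V(s)$ and the bound $|X(s)|\geq |L|/|V(s)|$ are also correct once $B\equiv 0$.

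The genuine gap is in your third step, which is where the entire difficulty of the problem sits, and your argument there is circular. The bound $\int_{|y|\leq r}\rho(t,y)\,dy=O(r^3)$ and, more generally, the claim that $|E(s,X(s))|$ integrates in time to a polynomial bound, presuppose control of $\|\rho(t)\|_{L^\infty}$ (equivalently, of the decay of $f(t,x,v)$ in $v$ uniformly in $x$) near the origin; but without compact velocity support this is exactly the quantity one is trying to bound --- it is controlled only through the growth of $|V(s)|$ along characteristics, which in turn requires the field bound. Moreover, the naive estimate along a near-radial trajectory gives an impulse of size $M|V|/|L|$ per close approach to the origin, which degenerates as $|L|\to 0$; interpolating between the $L=0$ case and small $|L|>0$ without any loss in $|L|^{-1}$, and doing so with only the polynomial decay (\ref{assumptiononinitialdata}) rather than compact support, is precisely the quantitative content that must be supplied (it is what the paper's moment bootstrap, and the companion relativistic Vlasov--Poisson paper it cites, are designed to do). You flag this yourself as ``the main obstacle,'' but no mechanism is proposed to overcome it: neither the conservation law (\ref{conservationlaw}) nor angular momentum alone yields the uniform bound on $\int_0^T|E(s,X(s))|\,ds$ needed for the continuation criterion (\ref{march13eqn21}). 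As it stands, the proposal reduces the problem correctly but does not prove the theorem; the reduction would become a complete proof only if combined with a moment-propagation/bootstrap argument of the type carried out in Sections 3--5 of the paper (or an invocation of the corresponding large-data radial relativistic Vlasov--Poisson result, which is not part of your argument).
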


\subsection{Local theory and  the main idea  of proof}

Since the assumption imposed on the initial data in (\ref{assumptiononinitialdata}) is stronger than the assumptions required for initial data in the work of Luk-Strain \cite{luk},  the continuation criteria obtained there can be applied directly in this paper. From  the work of Luk-Strain \cite{luk}, we know that the $3D$ relativistic  Vlasov-Maxwell system is local well-posed and the regularity can be propagated within the  time interval of existence $[0,T)$, where $T$ denotes the maximal time of existence.   Moreover, from the work of    Luk-Strain \cite{luk},  we know that the lifespan can be extended to $[0,T+\epsilon]$ for some positive number $\epsilon$ if the following assumption holds, 
\be\label{march13eqn21}
\sup_{t\in[0, T)} \sup_{x_0,v_0\in \R^3} \int_0^T\big(|E(s,X(s;t,x_0,v_0))| +|B(s,X(s;t,x_0,v_0))|  \big)  ds < +\infty. 
\ee
Readers are refereed to  \cite{luk}[Theorem 5.7] for more details.

The main goal of this paper is to show that the acceleration accumulated along characteristics is indeed uniformly bounded for all time. Hence finishing the proof of global  regularity for the $3D$ relativistic RVM system. 

To this end, we propagate a high order moment of the distribution function of particles. By using the Glassey-Strauss decomposition and the spherical symmetry of the solution, we show that the quantity in (\ref{march13eqn21}) is controlled from the above by the moment of the distribution function, see the estimate (\ref{feb25eqn72}) in Lemma \ref{roughalongchar2}. 

Actually, we not only  show that the boundedness of the   high order moment  but also show that it grows at most polynomially over time, see the estimates (\ref{march21eqn21}) and (\ref{march21eqn22}). To this end,  as summarized in the Proposition \ref{mainprop}, the main observation is that   the majority of   particles, which are localized around zero due to the polynomial decay assumption on the initial data,  will not be accelerated much after the speed of particles reaches a certain level. 

Intuitively speaking,  particles will travel toward infinity when  the speed of particles reaches a certain level. Due to the spherical symmetry, the electromagnetic field is localized around zero and it becomes weaker as the radius becomes larger. As a result, the accumulate acceleration caused by the electromagnetic field  is not strong.  To get more transparent intuition,  we refer readers to \cite{wang2} for a similar result in the $3D$  relativistic Vlasov-Poisson system, which is a simpler model of RVM.

To prove Proposition \ref{mainprop}, we measure carefully the gain and the loss of using the smoothing effects. One type of the smoothing effects comes from the oscillation in time for the electromagnetic field itself. The other   type of the smoothing effect, as pointed out by  Klainerman-Staffilani \cite{Klainerman3}, is that the integration of electromagnetic field along the characteristic is smoother.  

This paper is organized as follows.

\begin{enumerate}
	\item[$\bullet$] In section \ref{pre}, we introduce the notation and prove two basic lemmas used in this paper.
	\item[$\bullet$] In section \ref{proof},   we introduce the set-up of propagation of moment and prove the Theorem \ref{maintheorem}  under the assumption that we have a good control of the electromagnetic field and the increment of characteristics in terms of the moment of the distribution function. 
\item[$\bullet$] In section \ref{roughcontrol},  we give a rough control of the electromagnetic field based on the Glassey-Strauss decomposition. 
\item[$\bullet$] In section \ref{incrementestimate}, by exploiting the smoothing effect, we use a Fourier method to control the increment of the magnitudes of the spatial characteristic $X(s;t,x_0,v_0)$ and the velocity characteristic   $V(s;t,x_0,v_0)$ over time. 
\end{enumerate}

\vo

\noindent \textbf{Acknowledgment}\qquad The author is supported by NSFC-11801299.

\section{Preliminary}\label{pre}
 
 For any two numbers $A$ and $B$, we use  $A\lesssim B$ and $B\gtrsim A$  to denote  $A\leq C B$, where $C$ is an absolute constant. We use the convention that all constants which only depend on the initial data, e.g., the conserved quantities  ($\|f(t,x,v)\|_{L^p_{x,v}},$ $p\in[1,\infty]$, $\|(1+|v|^2)f(t,x,v)\|_{L^1_{x,v}},$ $ \|E(t)\|_{L^2}, \|B(t)\|_{L^2}$), will be treated as absolute constants. 

 For any two vectors $v,u\in \R^3$, we use $\angle(v,u)$ to denote the angle between $v$ and $u$ and use the convention that $\angle(v,u)\in[0,\pi]$. For any $r\in \R_{+}$, we use  $r_{-}$ to denote $\min\{r,1\}.$ For any vector $v\in \R^3/\{0\}$, we use $\tilde{v}:=v/|v|\in \mathbb{S}^2$ to denotes the direction of $v$.  Note that, for any $v\in\R^3/\{0\}$,  we define $S_{v}:=\{\omega,\omega\in \mathbb{S}^2,\omega\cdot \tilde{v} =0\}$ to be the great circle on sphere that is orthogonal to the direction $\tilde{v}$.

  We  fix an even smooth function $\tilde{\psi}:\R \rightarrow [0,1]$, which is supported in $[-3/2,3/2]$ and equals to ``$1$'' in $[-5/4, 5/4]$. For any $k\in \mathbb{Z}$, we define the cutoff functions $\psi_k, \psi_{\leq k}, \psi_{\geq k}:\cup_{n=1,3}\R^n\longrightarrow \R$ as follows, 
\[
\psi_{k}(x) := \tilde{\psi}(|x|/2^k) -\tilde{\psi}(|x|/2^{k-1}), \quad \psi_{\leq k}(x):= \tilde{\psi}(|x|/2^k)=\sum_{l\leq k}\psi_{l}(x), \quad \psi_{\geq k}(x):= 1-\psi_{\leq k-1}(x).
\]
 Moreover, for any $l,n\in \Z, l\geq n$, we define   the  cutoff function $\varphi_{l;	n}(\cdot)$ with threshold $n$ as follows, 
 \be\label{cutoffwith}
\varphi_{l;	n}(x) =\left\{\begin{array}{ll}
\psi_{\leq n}(x) & \textup{if\,\,} l=n\\
\psi_{l}(x)  & \textup{if\,\,} l> n.\\
\end{array}\right. 
 \ee
If $n=0$, then we use the convention that $\varphi_j(\cdot)$ denotes $\varphi_{j;0}(\cdot).$

We first record  the classic Kirchhoff's formula, which allows us to represent the solution of linear wave in physical space.  
 
\begin{lemma}\label{Kirchoff}
 For any $t\in \R,x\in \R^3$, the following equality holds, 
\be\label{dec20eqn1}
\d^{-1}   \sin( t\d ) h( x)  = \frac{1}{4\pi}   t  \int_{\mathbb{S}^2} h(x+t \theta) d \theta,
\ee
\be\label{march4eqn100}
\d^{-1}   \cos(t\d) h(x)= \frac{1}{4\pi}     \int_{\mathbb{S}^2}  \d^{-1}  h(x+t \theta) d \theta     + \frac{1}{4\pi}     \int_{\mathbb{S}^2} t \theta\cdot \frac{\nabla}{\d} h(x+t \theta) d \theta. 
\ee
\end{lemma}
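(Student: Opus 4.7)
The plan is to establish the first identity \eqref{dec20eqn1} via Fourier analysis and then derive the second identity \eqref{march4eqn100} by differentiating the first in $t$ and composing with $\d^{-1}$.

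For \eqref{dec20eqn1}, I would take the spatial Fourier transform of both sides. The left-hand side has symbol $\sin(t|\xi|)/|\xi|$ acting on $\hat{h}(\xi)$. For the right-hand side, translation by $t\theta$ multiplies $\hat{h}(\xi)$ by $e^{i t\theta\cdot\xi}$, so the identity reduces to the classical spherical integral $\int_{\mathbb{S}^2} e^{i t\theta\cdot\xi}\,d\theta = 4\pi \sin(t|\xi|)/(t|\xi|)$. This latter identity follows by choosing spherical coordinates with $\xi$ aligned to the north pole and computing $\int_0^\pi e^{i t|\xi|\cos\phi}\sin\phi\,d\phi$ directly. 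Multiplying by the prefactor $t/(4\pi)$ matches the left-hand side exactly.

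For \eqref{march4eqn100}, I would differentiate \eqref{dec20eqn1} in $t$. The left-hand side becomes $\cos(t\d) h(x)$. On the right, the product rule gives two terms: one without the factor of $t$, and one in which $\partial_t$ hits the argument of $h$, producing $\theta \cdot \nabla h(x+t\theta)$. Applying $\d^{-1}$ to both sides, and using that $\d^{-1}$ commutes with spatial translations, yields precisely \eqref{march4eqn100}, with the Riesz transform $\nabla/\d$ arising naturally in the second term.

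No serious obstacle arises; the only point requiring mild care is the low-frequency behavior of the multipliers $|\xi|^{-1}$ and $\xi/|\xi|$, but these are locally integrable in three dimensions. Hence all Fourier-side manipulations are justified for $h$ in a suitable dense class (e.g., Schwartz functions), and the identities then extend by density to the function spaces encountered throughout the paper.
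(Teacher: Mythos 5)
Your proposal is correct and follows essentially the same route as the paper: the first identity is reduced on the Fourier side to the spherical integral $\int_{\mathbb{S}^2}e^{it\theta\cdot\xi}\,d\theta=4\pi\sin(t|\xi|)/(t|\xi|)$ computed in spherical coordinates, and the second identity is obtained by differentiating the first in $t$ (with $\d^{-1}$ applied, commuting with translations).
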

\begin{proof}
Note that 
\be\label{march4eqn41}
 \int_{\R^3} e^{-i   x\cdot \xi } \int_{\mathbb{S}^2} h(x+t\theta) d \theta d x = \int_{\mathbb{S}^2 } e^{i  t \xi \cdot \theta} \hat{h}(\xi) d\theta=  2\pi \hat{h}(\xi)  \int_{0}^{\pi}e^{it|\xi|\cos(\phi)} \sin(\phi) d \phi = 	\frac{4\pi \sin(  t |\xi|)}{t|\xi|}  \hat{h}(\xi)  .
\ee
 Hence finishing the proof of the desired formula (\ref{dec20eqn1}). Our desired equality (\ref{march4eqn100}) holds after taking derivative with respect to ``$t$'' for  the equality (\ref{dec20eqn1}).
\end{proof}

Recall the equations satisfied by the electromagnetic field in (\ref{electromagnetic}) and   the Kirchhoff's formula in (\ref{dec20eqn1}).  From the Duhamel's formula, the following decomposition holds after we do dyadic decomposition for the velocity variable, 
\be\label{march14eqn1}
K(t)= K_{free}(t) +  \sum_{j\in \mathbb{Z}_{+}} K_j(t), \quad K\in\{E, B\}.
\ee
where $K_{free}(t), K\in \{E, B\},$   denote  the linear wave solution determined by the initial data of RVM (\ref{mainequation}), 
\be\label{march14eqn2}
K_{free}(t)=\cos(t\d)K_0 + \sin(t\d)\d^{-1}(\p_t K)\big|_{t=0}, \quad K\in \{E, B\},
\ee
\be\label{march14eqn4}
E_j(t):=     -   \int_{0}^{t}  \int_{\R^3} \int_{\mathbb{S}^2}  (t-s)   \big(  \hat{v} \p_t f(s, x+(t-s)\theta, v)    + \nabla_x f(s, x+(t-s)\theta, v)\big) \varphi_j(v)d\theta  d v d s,
\ee
\be\label{march14eqn5}
B_j(t):= - \int_{0}^{t}   \int_{\mathbb{S}^2}\int_{\R^3}  (t-s) \hat v\times \nabla_x f(s, x+(t-s)\theta, v) \varphi_j(v) d\theta  d v d s.
\ee

The benefit of radial symmetry is mainly exploited in the following Lemma. Essentially speaking, it says that 	the average on sphere for an integrable radial function is well controlled if the center of sphere is far away from zero. Also, it's very natural that the radial symmetry won't provide any gain when the center is close to zero, which explains why the estimates (\ref{march6eqn1}) and (\ref{feb29eqn1}) behaves badly when $r$ approaches to zero. 
 \begin{lemma}\label{basicestimateint}
For any fixed $\omega_0 \in \S^2$,  any radial function $h:\R^3\longrightarrow \mathbb{C}$, the following estimate holds for any $ l\in \mathbb{Z}\cap(-\infty,2],   x\in \R^3/\{0\}, s\in \R_{+}/\{|x|\}$, 
\be\label{march6eqn1}
\big| \int_{\mathbb{S}^2} h(x +s\omega) \psi_{\leq l}(\angle(\omega, \omega_0))  d \omega  \big| \lesssim \frac{2^l}{(2^l+| \tilde{x}\times \omega_0|) rs} \min\{ \frac{1}{ |r-s|} \|h\|_{L^1_x},\sup_{z\in \R^3, |z|\in [|r-s|, r+s]} s|z||h(z)| \}, 
\ee
where $\tilde{x}:=x/|x|$ and $r:=|x|$.
Moreover,  for  any fixed   $a,b\in \R, $   and  any radial function $f:\R_x^3\times \R_v^3 \longrightarrow \R_{+} $ in the sense of \textup{(\ref{radialsym})}, the following estimate holds,
  \be\label{feb29eqn1}
 \big| \int_{\R^3}\int_{\mathbb{S}^2}\int_{S_v}  f(x-a \tilde{v} -b\omega_{v}+ s \omega, v)  \psi_{\leq l}(\angle(\omega, \omega_0)) d\omega_v   d \omega d v\big|\lesssim  \frac{ 2^l\| f(x,v)\|_{L^1_{x,v}}}{(2^l+|\tilde{x}\times \omega_0|)r s |r-s|}. 
 \ee 
 where   for  any fixed $v\in \R^3/\{0\}, S_v:=\{\theta\in \S^2, \theta\cdot \tilde{v}=0\}$.
\end{lemma}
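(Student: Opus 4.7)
The plan is two-stage: first prove Part 1 by explicit spherical computation, then reduce Part 2 to Part 1 by bundling the $v$- and $\omega_v$-integrations into a new weight made radial by the joint radial symmetry of $f$.

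For Part 1, I would parameterize $\omega \in S^2$ by the polar angle $\theta = \angle(\omega,\tilde x)$ and an azimuthal angle $\phi$. Since $h$ is radial and $|x+s\omega|^2 = r^2+s^2+2rs\cos\theta$, the substitution $\theta \mapsto \rho := |x+s\omega|$ via $\rho\,d\rho = -rs\sin\theta\,d\theta$ gives $d\omega = (\rho/rs)\,d\rho\,d\phi$ with $\rho\in [|r-s|,r+s]$. Write $\alpha = \angle(\tilde x,\omega_0)$, $\beta = \sin\alpha = |\tilde x\times\omega_0|$, and let $L(\rho) := \int_\phi \mathbf 1_{\{\angle(\omega,\omega_0)\leq 2^l\}}\,d\phi$. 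The identity $\cos(\angle(\omega,\omega_0)) = \cos\theta\cos\alpha + \sin\theta\sin\alpha\cos\phi$ gives the uniform bound $L(\rho) \lesssim 2^l/(2^l+\beta)$: in the off-pole regime $\beta \gtrsim 2^l$, the cap intersects a latitude of radius $\sim\beta$ in an arc of length $\lesssim 2^l/\beta$; in the near-pole regime $\beta \lesssim 2^l$, the trivial $L\leq 2\pi$ matches $2^l/(2^l+\beta)\sim 1$. Substituting this yields
\[
\Bigl|\int_{S^2} h(x+s\omega)\,\psi_{\leq l}(\angle(\omega,\omega_0))\,d\omega\Bigr| \lesssim \frac{2^l}{(2^l+\beta)\,rs}\int_{|r-s|}^{r+s} \rho\,|H(\rho)|\,d\rho,
\]
where $h(z)=H(|z|)$. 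The first entry of the min follows from $\rho\geq |r-s|$ together with $\|h\|_{L^1_x} = 4\pi\int \rho^2|H|\,d\rho$; the second from $\int\rho|H|\,d\rho\leq s^{-1}\sup_\rho s\rho|H|\cdot 2\min(r,s)$ with $\min(r,s)\leq s$.

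For Part 2, the key observation is that the $v$- and $\omega_v$-integrations can be absorbed into a radial weight. Define
\[
h(y) := \int_{\R^3}\int_{S_v} f\bigl(y - a\tilde v - b\omega_v,\,v\bigr)\,d\omega_v\,dv.
\]
I would first check $h$ is radial in $y$: given $R\in SO(3)$, substituting $v\mapsto Rv$ (unit Jacobian) and $\omega_v\mapsto R\omega_v$ (which sends $S_v$ bijectively onto $S_{Rv}$ and preserves arc length) combined with the joint symmetry $f(Rz,Rv)=f(z,v)$ forces $h(Ry)=h(y)$. Next, by Fubini and translation invariance in $y$,
\[
\|h\|_{L^1_y} = \int_{\R^3}\int_{S_v}\Bigl(\int_{\R^3} f(y',v)\,dy'\Bigr)d\omega_v\,dv = 2\pi\,\|f\|_{L^1_{x,v}},
\]
using that $S_v$ is a great circle of length $2\pi$. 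Exchanging the order of integration in Part 2's left-hand side then rewrites it as $\int_{S^2} h(x+s\omega)\,\psi_{\leq l}(\angle(\omega,\omega_0))\,d\omega$, and the first entry of the min in Part 1 delivers the claimed bound.

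The main obstacle is the uniform latitude-arc bound $L(\rho)\lesssim 2^l/(2^l+\beta)$ — elementary but fiddly, requiring care at the two endpoints $\rho\to |r-s|$ and $\rho\to r+s$ where $\sin\theta\to 0$ (but $L$ shrinks commensurately), and a clean case split between the off-pole and near-pole regimes. Once that geometric estimate is in hand, Part 1 reduces to a coarea-style change of variable plus two elementary bounds on $\int\rho|H|\,d\rho$, and Part 2 is essentially one line after verifying the radiality of $h$.
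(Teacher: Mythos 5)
Your proposal is correct and follows essentially the same route as the paper: bound the azimuthal support of the angular cutoff by $2^l/(2^l+|\tilde{x}\times\omega_0|)$, change variables $\theta\mapsto \rho=|x+s\omega|$ to reduce to a one-dimensional radial integral over $[|r-s|,r+s]$, and for the second estimate define the auxiliary function $h(y)=\int\int_{S_v}f(y-a\tilde v-b\omega_v,v)\,d\omega_v\,dv$, verify its radiality from the joint symmetry of $f$, and apply the first estimate with the $L^1$ branch of the minimum. The only difference is that you spell out the latitude-arc bound in slightly more detail than the paper, which simply asserts it.
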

\begin{proof}
Since $h$ is radial and fixed $\omega_0$ is arbitrary, without loss of generality, we assume that $x=(r,0,0), r:=|x|$. Let $\omega:=(\cos\theta, \sin\theta \cos\phi, \sin\theta \sin \phi )$ and  $\omega_0:=(\cos\theta_0, \sin\theta_0 \cos\phi_0, \sin\theta_0 \sin \phi_0 )$. Due to the cutoff function $\psi_{\leq l}(\angle(\omega, \omega_0))$, we know that
\[
|supp_{\phi}(\psi_{\leq l}(\angle(\omega, \omega_0)))| \lesssim \frac{2^l}{2^l + \sin \theta_0} \sim \frac{2^l}{2^l + |\tilde{x}\times \omega_0|}=:C(x,\omega_0).
\] 
Therefore, from the above estimate and the radial symmetry of $h$, we have 
\[
\big| \int_{\mathbb{S}^2} h(x +s\omega) \psi_{\leq l}(\angle(\omega, \omega_0))  d \omega    \big| \lesssim  C(x,\omega_0)  \int_0^\pi |h((\sqrt{r^2+s^2 + 2rs \cos \theta},0,0))| \sin \theta d \theta  
\]
\be\label{march7eqn1}
 \lesssim  \frac{C(x,\omega_0)}{  rs} \int_{|r-s|}^{r+s} |h((z,0,0))| z d z\lesssim   \frac{C(x,\omega_0)}{  rs}\min\{\frac{1}{|r-s|} \int_{|r-s|}^{r+s} |h((z,0,0))| z^2 d z,  \sup_{y\in \R^3, |y|\in [|r-s|, r+s]} s|y||h(y)|  \}.
\ee
Hence finishing the proof of our desired estimate (\ref{march6eqn1}).
In the above estimate, we used the change coordinates $\theta\longrightarrow z:=\sqrt{r^2+s^2+2rs \cos\theta}$.   For fixed $a,b\in \R$, we  define
\[
\rho(x   ;a,b):=\int_{\R^3} \int_{S_v}  f(x-a \tilde{v} -b\omega_{v}, v)   d\omega_v   d v.
\]
Note that, $\forall  R\in SO(3),$ the following equality holds from the radial symmetry of $f(x,v)$ in (\ref{radialsym}),
\[
\rho( R x ;a,b )=  \int_{\R^3}  \int_{S_v}  f(Rx-a \tilde{v} -b\omega_{v}, v)   d\omega_v   d v=  \int_{\R^3} 	 \int_{\R^3}  \int_{S_u}  f(Rx-a R\tilde{u} -bR\omega_{u}, Ru)   d\omega_u   d v
\]
\[ 
 =   \int_{\R^3} \int_{S_u}  f(x-a \tilde{u} -b\omega_{u}, u)   d\omega_u   d v=\rho(x;a,b) .
\]
Therefore,   our desired estimate (\ref{feb29eqn1}) holds directly after we applying the estimate (\ref{march6eqn1})  to the radial function $\rho(x;a,b)$.  
 
\end{proof}

\section{Propagation of moments and the proof of Theorem}\label{proof}

Let $N_1= N_0\times 10^{-3}=10^5$. Define
\be\label{jan18eqn1}
 M_{N_1}(t):= \int_{\R^3} \int_{\R^3} (1+|v|)^{N_1} f(t,x,v) d x d v ,\quad 
\tilde{M}_{N_1}(t):= (1+t)^{ N_1^2}+ \sup_{s\in [0,t]}  M_{N_1}(s).
\ee
Note that for any $t\in [0, T ), j\in \mathbb{Z}_{+}$, we have
\[
\int_{\R^3}  f(t,x,v)  \psi_j(v) d v  \leq    \min\{2^{3j},\int_{\R^3}  f(t,x,v)  \psi_j(v) d v \} \lesssim   2^{3j(1-1/p)}\big( \int_{\R^3}  f(t,x,v)  \psi_j(v) d v\big)^{1/p}
\]
Hence, we obtain the following basic estimate for any $p\in [1,\infty], $
\be\label{feb25eqn12}
\|   f(t,x,v)  \psi_j(v) \|_{ L^p_x L^1_v }\lesssim 2^{3j(1-1/p)} \big( \int_{\R^3} \int_{\R^3} f(t,x,v)  \psi_j(v) d v\big)^{1/p} 
\lesssim  2^{3j(1-1/p)} \big( \min\{2^{-j}, 2^{- N_1 j}  M_{N_1}(t)\} \big)^{1/p}.
\ee

 Let $M_t$ denotes the minimum integer such that $2^{M_t}\geq (\tilde{M}_{N_1}(t))^{1/ {N_1}}$. Hence $2^{M_t}   \sim(\tilde{M}_{N_1}(t))^{1/{N_1}}$.  We define a set of majorities of particles at time $s$ as follows,  
\be\label{march19eqn31}
R  (t,s):= \{(X(s;t, x_0,v_0),V(s;t,x_0,v_0)) :  |X(0;t,x_0,v_0)|+|V(0;t, x_0,v_0))|\leq  2^{\beta M_t}, x_0,v_0\in \R^3 \}, \quad \beta:=1/300.
\ee

 Let $t\in [0,T)$ and the initial data $(x_0, v_0)\in R(t,0)$ of characteristics in (\ref{characteristicseqn}) be fixed. For the simplicity of notation, we will omit the dependence of characteristics with respect to the initial data and view the spatial characteristic $X(s)$ and velocity characteristic  $V(s)$ as regular functions with respect to time $s$.

We will use a standard bootstrap argument to show that the size of any   velocity characteristic $V(s)$, which starts from the major set $R(t,0)$,  is uniformly bounded by $  2^{(1-\beta)M}$, $\beta:=1/300$, for all $s\in [0,t]$. More precisely, we have 
 \begin{proposition}\label{mainprop}
For any $t\in[0, T)$, the following relation holds for some sufficiently large absolute constant $C$, 
\be\label{jan13eqn11}
R(t,t)\subset B(0,   C 2^{ \beta {M_t} })\times B(0,  C 2^{(1-\beta)M_t}).  
\ee

\end{proposition}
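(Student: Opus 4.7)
The plan is a continuous induction (bootstrap) argument in time. By reversibility of the characteristic system (\ref{characteristicseqn}), $R(t,t)$ coincides with the image at time $s=t$ of $\{(a_0,b_0) : |a_0|+|b_0|\leq 2^{\beta M_t}\}$ under the forward flow; denote by $(X(s),V(s))$ the corresponding forward characteristic, so that $(X(0),V(0))=(a_0,b_0)$. It then suffices to prove, uniformly in such $(a_0,b_0)$, the bounds $|V(t)|\lesssim 2^{(1-\beta)M_t}$ and $|X(t)|\lesssim 2^{\beta M_t}$. The spatial bound is essentially automatic: $|\hat V|\leq 1$ yields $|X(s)|\leq|a_0|+s\leq 2^{\beta M_t}+t$, and the defining inequality $2^{M_t}\geq (1+t)^{N_1}$ together with $\beta=1/300\gg 1/N_1=10^{-5}$ gives $t\leq 2^{M_t/N_1}\ll 2^{\beta M_t}$. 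So the substantive content is the velocity bound.

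For the velocity, I would fix a large absolute constant $C_0$ and set
\[
s_* := \sup\{s\in[0,t] : |V(\tau)|\leq C_0\,2^{(1-\beta)M_t}\ \text{for all}\ \tau\in[0,s]\};
\]
the set is nonempty since $|b_0|\leq 2^{\beta M_t}\ll 2^{(1-\beta)M_t}$. The aim is the improved estimate
\[
|V(s)-V(0)| \leq \int_0^s |K(\tau,X(\tau),V(\tau))|\,d\tau \leq \tfrac{C_0}{2}\,2^{(1-\beta)M_t},\qquad s\in[0,s_*],
\]
which by continuity forces $s_*=t$ and closes the proof. Using (\ref{march14eqn1})--(\ref{march14eqn5}), split the field along the flow as $K=K_{free}+\sum_{j\in\mathbb{Z}_{+}}K_j$ for $K\in\{E,B\}$. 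The free part is a linear wave from $H^s$ data, decays in $L^\infty$ at rate $(1+\tau)^{-1}$, and integrates to $O(\log(1+t))$, which is negligible next to $2^{(1-\beta)M_t}$. For each dyadic velocity shell $K_j$, I would substitute the Kirchhoff representation, interchange the $\tau$-, sphere-, and $v$-integrations, and apply Lemma \ref{basicestimateint} to the inner sphere average. The radial-symmetry gain $(r|r-s|)^{-1}$ in (\ref{march6eqn1})--(\ref{feb29eqn1}) is the main engine of smallness; it operates precisely where $r=|X(\tau)|$ is large, which by the spatial bound holds for most $\tau$. Combined with (\ref{feb25eqn12}), which trades $L^p_xL^1_v$ norms of $f\psi_j(v)$ for negative powers of $2^j$ times $M_{N_1}(t)^{1/p}\lesssim 2^{N_1M_t/p}$, this should yield a bound $\sum_j\int_0^{s_*}|K_j|\,d\tau\lesssim 2^{\gamma M_t}$ with $\gamma<1-\beta$, and the bootstrap closes for $C_0$ large.

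The hardest step is the exponent bookkeeping. The margin $\beta=1/300$ is very small, so in each dyadic shell the cumulative loss---after the $\tau$-integration, the $j$-summation, and the weakest zone of Lemma \ref{basicestimateint} (the light-cone regime $|r-s|\to 0$)---must stay strictly below $\beta M_t$ powers. Taming the light-cone region is where the Klainerman-Staffilani observation, that the integral of the electromagnetic field along the characteristic is smoother than pointwise bounds would suggest, has to be invoked, through a Fourier-side analysis of the increment $X(s)-X(\tau)$ along the flow. Two regimes in $j$ must then be joined near $2^j\sim 2^{M_t}$: for $2^j\lesssim 2^{M_t}$ the conservation laws (\ref{conservationlaw})--(\ref{conservationlaw2}) combined with (\ref{feb25eqn12}) already deliver the required smallness, while for $2^j\gtrsim 2^{M_t}$ one must convert the polynomial weight (\ref{assumptiononinitialdata}) on the initial distribution---propagated to $M_{N_1}(t)$ and hence $\tilde M_{N_1}(t)$---into enough negative powers of $2^j$ to beat the moment growth. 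The argument succeeds only if the combined loss remains below $2^{(1-2\beta)M_t}$, the margin between the initial velocity scale $2^{\beta M_t}$ and the target $2^{(1-\beta)M_t}$.
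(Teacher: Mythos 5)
There is a genuine gap, and it sits at the heart of the matter: your bootstrap hypothesis is only a one-sided (upper) bound $|V(\tau)|\leq C_0 2^{(1-\beta)M_t}$, and from it you try to conclude that $\int_0^{s_*}|K(\tau,X(\tau))|\,d\tau\lesssim 2^{\gamma M_t}$ with $\gamma<1-\beta$ using Lemma \ref{basicestimateint} and (\ref{feb25eqn12}). The radial gain $(rs|r-s|)^{-1}$ in (\ref{march6eqn1})--(\ref{feb29eqn1}) only helps when $r=|X(\tau)|$ is not small, and your justification that this ``holds for most $\tau$ by the spatial bound'' is a non sequitur: the spatial estimate $|X(\tau)|\leq 2^{\beta M_t}+\tau$ is an \emph{upper} bound and gives no lower bound whatsoever on $|X(\tau)|$; under your hypotheses the particle may linger near the origin, where the best available pointwise bound is (\ref{march3eqn51}), of size $2^{(1+\epsilon)M_t}/r_{-}$, and the best bound along characteristics is the first estimate of (\ref{feb25eqn72}), of size $2^{3(1+3\epsilon)M_t}$ --- both far above the target $2^{(1-\beta)M_t}$. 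This is precisely why the paper's bootstrap is two-sided: in the window $[T^{\ast},T^{\ast\ast}]$ one assumes $2^{(1-\beta)M_t-2}\leq |V(s)|\leq 2^{(1-\beta)M_t}$ (see (\ref{aproiriestimate})--(\ref{march12eqn31})), so that $|\hat V(s)|\geq 4/5$, and then, using the field estimates of Proposition \ref{mainproposition} (whose proof is the Fourier machinery of Section \ref{incrementestimate} and which itself assumes this two-sided regime), one obtains the convexity estimate (\ref{march17eqn21}) for $X(s)\cdot\hat V(s)$ and hence $|X(s)|\gtrsim |s-\tau_{\star}|$ outside a tiny neighborhood of a single near-minimum time $\tau_{\star}$. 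Only this ballistic-escape mechanism converts the $2^{(1-\alpha+\epsilon)M_t}/|X(s)|_{-}$ bound of (\ref{march14eqn41}) and (\ref{march4eqn1}) into a velocity increment $\lesssim 2^{(1-\alpha+2\epsilon)M_t}<2^{(1-2\beta)M_t}$, which is what closes the bootstrap; near $\tau_{\star}$ the crude bounds (\ref{march15eqn35}) and (\ref{feb25eqn72}) are affordable only because the excluded time window has length $2^{-2M_t+O(\alpha M_t)}$. Your outline contains no substitute for this step, so the claimed smallness $\gamma<1-\beta$ cannot be reached with the tools you invoke.

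Two smaller points. First, your treatment of $K_{free}$ asserts $(1+\tau)^{-1}$ pointwise decay, which does not follow from $E_0,B_0\in H^s$ alone; what is true (and sufficient, since $t\lesssim 2^{M_t/N_1}$) is a uniform-in-time $L^\infty_x$ bound via Sobolev embedding and energy conservation. Second, even granting your strategy, the exponent target you state is not quite the right one for the paper's scheme: the improvement must beat the gap between the two-sided bootstrap thresholds (roughly $2^{(1-2\beta)M_t}$, cf.\ the final estimates in the proof of Proposition \ref{mainprop}), not merely $2^{(1-\beta)M_t}$; this matters because $\beta$ is tiny and the admissible loss per step is correspondingly tiny.
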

\begin{proof}

Recall (\ref{march19eqn31}) and the system of equations satisfied by characteristics in (\ref{characteristicseqn}). First of all, for any $t_1, t_2\in  [0,t]$,  the following rough estimate holds, 
\be\label{march17eqn11}
||X(t_2)| -|X(t_1)| |\leq \int_{t_1}^{t_2} |\tilde{X}(s)\cdot \hat{V}(s)| ds \leq  |t_2-t_1|, \quad \Longrightarrow \sup_{s\in[0,t]} |X(s)| \leq 2^{\beta M_t} +|t| <  2^{\beta M_t+1}.
\ee
Hence finishing the proof of first part of (\ref{jan13eqn11}).  

 From the continuity of characteristics and the size of initial data in $R(t,0)$, we know that one of the following two scenarios holds. 
\begin{enumerate}
\item[(i)] There exists    a maximal time  $T^{\ast}\in (0,t)$  and a maximal number  $T^{\ast\ast} > T^{\ast}$ such that the following estimates holds, 
\be\label{aproiriestimate}
\sup_{s   \in [0, T^{\ast}]}   |V(s )| \leq  2^{(1-\beta) M_t-1},  \quad |V(T^{\ast})|= 2^{(1-\beta) M_t-1},\quad 
\ee
\be\label{march12eqn31}
2^{(1-\beta) M_t-2} \leq \inf_{s \in [T^{\ast},  T^{\ast\ast} ]}   |V(s)|\leq \sup_{s \in [T^{\ast},  T^{\ast\ast} ]}  |V(s)| \leq   2^{(1-\beta)M_t}, \quad [T^{\ast},  T^{\ast\ast} ]\subset[0,t].
\ee
\item[(ii)] The following estimate holds, 
\be\label{aproiriestimatetrivial}
\sup_{s   \in [0, t]}   |V(s )| \leq   2^{(1-\beta)M_t-1} . 
\ee
\end{enumerate}

If the second scenario happens or $\tilde{M}_t \lesssim 1$, then there is nothing left to be proved, we restrict ourself to the first scenario under the assumption that  $\tilde{M}_t \gg 1$. We will show that the estimate (\ref{march12eqn31}) can be improved hence close the bootstrap argument. To obtain an improved estimate, we control the increment of velocity between any two time in $[T^{\ast},  T^{\ast\ast} ]\subset[0,t]$, which is also the main mission of this paper.

As a result of direct computations, we have
\be\label{jan12eqn21}
\frac{d}{d s} | X(s )|^2   = 2X(s ) \cdot  \hat{V} (s ), \quad \frac{d}{d s} | V (s )|    = 2  \tilde{V}(s )\cdot E(s, X(s)).
\ee
To better see the dynamics of the magnitude of spatial characteristic, we also study the second order derivative of $|X(s)|^2$. As a result, we have 
\[
\frac{d}{d s}\big( X(s  ) \cdot   \hat{V} (s )\big)   = |\hat{V}(s  ) |  +  \frac{X(s  ) \cdot \big(E(s, X(s  ) ) + \hat{V}(s   ) \times B(s,X(s  ) )\big)}{{\sqrt{1+ | V(s)|^2 }}} - \frac{X(s)\cdot {V}(s ) }{\big( {1+ | V(s )|^2 }\big)^{3/2}} \big({V}(s ) \cdot  E(s,X(s))\big)
\]
\be\label{march1eqn2100}
=|\hat{V}(s)| + C_1(X(s), V(s))\cdot E(s, X(s)) +C_2(X(s), V(s))\cdot B(s, X(s))
\ee
where
\be\label{march17eqn1}
 C_1(X(s), V(s)):= \frac{X(s)}{\sqrt{1+|V(s)|^2}}- \frac{X(s)\cdot V(s)}{(1+|V(s)|^2)^{3/2}} V(s), \quad C_2(X(s), V(s)):=\frac{X(s)\times \hat{V}(s)}{\sqrt{1+|V(s)|^2} }. 
\ee
From the estimate (\ref{march16eqn51}) in Proposition \ref{mainproposition}, the following rough estimate holds for any $t_1, t_2\in  [T^{\ast}, T^{\ast\ast} ]\subset[0,t]$, s.t., $t_1\leq t_2,$
\[
  X(t_2 ) \cdot   \hat{V} (t_2 ) - X(t_1 ) \cdot   \hat{V} (t_1 )   \geq   \frac{4}{5} (t_2-t_1) -  \big|\int_{t_1}^{t_2} C_1(X(s),V(s)) \cdot E(s, X(s  ) ) d s \big|
\]	
\be\label{march17eqn21}
-\big|\int_{t_1}^{t_2}(  C_2(X(s), V(s)) \cdot  B(s,X(s  ) )  d s\big| \geq \frac{2}{3}(t_2-t_1) - 2^{-2M_t +16\alpha M_t},\quad \alpha:=1/100=3\beta.
\ee
 \noindent \textbf{Case $1$:\quad}  If there exists a time $\tau_{\star}\in [T^{\ast}, T^{\ast\ast} ]$ s.t.,  $X(\tau_{\star})\cdot \hat{V}(\tau_{\star})=0$. 

From the estimate (\ref{march17eqn21}), the following estimate holds for $s_1,s_2\geq 0$ s.t, $s_1+\tau_{\star}, -s_2+\tau_{\star}\in[T^{\ast}, T^{\ast\ast} ]$,
\[
|X(\tau_{\star})|^2-|X(\tau_{\star}-s_2)|^2= \int_{0}^{s_2} 2X(\tau_{\star}-s_2+\tau)\cdot \hat{V}(\tau_{\star}-s_2+\tau)d  \tau \leq  \int_{0}^{s_2} - \frac{4}{3}(s_2-\tau) +  2^{-2M_t+16\alpha M_t} d \tau 
\]
\be\label{march17eqn32}
 \Longrightarrow |X(\tau_{\star}-s_2)|^2\geq |X(\tau_{\star})|^2 + \frac{2}{3} s_2^2 - 2^{-2M_t+16\alpha M_t} s_2. 
\ee
\be\label{march17eqn31}
|X(\tau_{\star}+s_1)|^2 -|X(\tau_{\star})|^2 \geq\int_{0}^{s_1} 2\big(\frac{2}{3}\tau - 2^{-2M_t+7\alpha  M_t} \big) d \tau \gtrsim s_1^2-  2^{-2M_t+16\alpha M_t}s_1. 
\ee

 \noindent \textbf{Case $2$:\quad}  If there doesn't exist  a time $\tau_{\star}\in [T^{\ast},  T^{\ast\ast} ]$ s.t.,  $X(\tau_{\star})\cdot \hat{V}(\tau_{\star})=0$.

For this case we know that $|X(s)|^2$ is a monotonic function with respect to $s$, which implies that either   $|X(T^{\ast}   )|^2$   or   $|X(T^{\ast\ast} )|^2$   is the minimum. If $|X( T^{\ast\ast} )|^2$ is the minimum , i.e., $X(s) \cdot \hat{V}( s)\leq 0$, then the following estimate holds from (\ref{march17eqn21}), 
 \[
 - X( T^{\ast\ast} -s ) \cdot   \hat{V} ( T^{\ast\ast} -s)\geq   X( T^{\ast\ast}  ) \cdot   \hat{V} (  T^{\ast\ast} ) - X(  T^{\ast\ast} -s ) \cdot   \hat{V} ( T^{\ast\ast} -s)   \geq \frac{2}{3} s -  2^{-2M_t+16 \alpha M_t}  . 
 \]
 \[
 \Longrightarrow |X(   T^{\ast\ast}  )|^2-|X(   T^{\ast\ast} -\tau )|^2 = \int_{0}^{\tau} 2 X(  T^{\ast\ast} -s)\cdot \hat{V}(  T^{\ast\ast} -s) d s \leq  -\frac{2}{3}\tau^2 +  2^{-2M_t+16\alpha  M_t+1} \tau
 \]
 \be\label{march17eqn33}
 \Longrightarrow |X(  T^{\ast\ast} -\tau )|^2 \geq |X( T^{\ast\ast}  )|^2+  \frac{2}{3} \tau^2 -  2^{-2M_t+16 \alpha M_t+1} \tau, \quad \tau\in [0, T^{\ast\ast}-T^{\ast}]. 
 \ee
If  $|X( T^{\ast } )|^2$ is the minimum , i.e., $X(s) \cdot \hat{V}( s)\geq  0$, then the following estimate holds from (\ref{march17eqn21}), 
\be\label{march25eqn1}
|X(   T^{\ast } +\tau  )|^2-|X(   T^{\ast } )|^2 = \int_{0}^{\tau} 2 X(  T^{\ast  } +s)\cdot \hat{V}(  T^{\ast } +s) d s\geq  \frac{2}{3} \tau^2 -  2^{-2M_t+16 \alpha M_t+1} \tau, \quad \tau\in [0, T^{\ast\ast}-T^{\ast}].  
\ee

 To sum up, from the estimates (\ref{march17eqn32}--\ref{march25eqn1}), we have a good control of  the magnitude of characteristic for all time except a small neighborhood of the fixed local (global) minimum. 

 Since the   \textbf{Case} $2$ is of the same type as in the  \textbf{Case} $1$, 
  without loss of generality,  we restrict ourself to the   \textbf{Case} $1$. Let $\tau_0:=2^{-2M_t+20\alpha M_t}$. Based on the possible size of $|X(\tau_{\star})|$, we separate into two cases as follows.  

 \noindent $\oplus$\quad If $|X(\tau_{\star})|\geq 2^{-5M_t/3}$. 

  From the rough estimate of increment of magnitude of spatial characteristic in (\ref{march17eqn11}), we have
 \be\label{march17eqn47}
\sup_{s\in [\tau_{\star}- \tau_0, \tau_{\star}+ \tau_0]\cap [T^{\ast}, T^{\ast\ast} ]} |X(s)|\geq 2^{-5M_t/3}-2^{-2M_t+20\alpha M_t}\gtrsim  2^{-5M_t/3}.
 \ee
 From the above estimate,  the estimate (\ref{march14eqn41}) in Proposition \ref{mainproposition} and the rough estimate of the electromagnetic field (\ref{march3eqn51}) in Lemma \ref{roughestimateofelectromag}, the following estimate holds for any $\kappa\in[-2\tau_0, 2\tau_0  ] $,   s.t., $\tau_{\star}+\kappa\in  [T^{\ast}, T^{\ast\ast} ],$
 \be\label{march17eqn81}
 \big||V(\tau_{\star}+\kappa)| -  |V(\tau_{\star})| \big|\lesssim2^{M_t/3+7\alpha M_t} +\big|  \int_{0}^{\kappa} \frac{2^{(1-\alpha+\epsilon)M_t}}{|X(\tau_{\star}+s)|_{-}} ds 
\big| \lesssim  2^{2M_t/3+20\alpha M_t}.
 \ee
 For any $\kappa\in[ -\tau_0, \tau_0 ]^c $, s.t., $\tau_{\star}+\kappa\in  [T^{\ast},  T^{\ast\ast} ],$ from the estimates (\ref{march17eqn32})  and   (\ref{march17eqn31}), we have 
 \[
 |X(\tau_{\star}+\kappa)| \gtrsim |X(\tau_{\star})|+|\kappa|
 \]
 From the above estimate, the rough estimate of the electromagnetic field (\ref{march3eqn51}) in Lemma  \ref{roughestimateofelectromag}  and the estimate (\ref{march14eqn41}) in Proposition \ref{mainproposition}, the following estimate holds for   $\kappa\in  [ -2\tau_0,2 \tau_0 ]^c$, s.t., $\tau_{\star}+\kappa\in  [T^{\ast}, T^{\ast\ast} ],$
 \be\label{march17eqn62}
  \big||V(\tau_{\star}+\kappa)| -  |V(\tau_{\star}+sign(\kappa)\tau_0)| \big|\lesssim 2^{M_t/3+7\alpha M_t} + \big| \int_{\tau_{\star}+ sign(\kappa)\tau_0}^{\tau_{\star}+   \kappa} \frac{2^{(1-\alpha+\epsilon)M_t}}{\min\{|X(\tau_{\star})|+|s-\tau_{\star}|,1\} }  d s \big| \lesssim 2^{(1-\alpha)M_t+2\epsilon M_t}.
 \ee
where $\epsilon:=60/N_1=6\times 10^{-4}.$

  \noindent $\oplus$\quad If $|X(\tau_{\star})|\leq 2^{-5M_t/3}$. 

From the rough estimate of increment of magnitude of spatial characteristic in (\ref{march17eqn11}), the following estimate holds for $s\in  [-\tau_0, \tau_0]$ s.t., $ \tau_{\star}+s \in  [T^{\ast},  T^{\ast\ast} ],$
 \[
 |X(\tau_{\star}+s)|\leq  2^{-5M_t/3} +  s\lesssim  2^{-5M_t/3}. 
 \]
Since now $ |X(\tau_{\star}+s)|, s\in[-\tau_0, \tau_0]$, is small, from   the estimate (\ref{march16eqn51}) in Proposition \ref{mainproposition},  the following improved estimate holds, for any $t_1,t_2\in [\tau_{\star}-\tau_0, \tau_{\star}+\tau_0]\cap  [T^{\ast},  T^{\ast\ast} ]$, s.t., $t_1\leq t_2$, 
\[
  X(t_2 ) \cdot   \hat{V} (t_2 ) - X(t_1 ) \cdot   \hat{V} (t_1 )   \geq  \frac{2}{3}(t_2-t_1) - 2^{-5M_t/3-2M_t/3+9\alpha M_t}. 
\]
With the above improved estimate,  the following improved estimate holds for any $s\in[-\tau_0, \tau_0]$, s.t., $\tau_{\star}+s\in  [T^{\ast},  T^{\ast\ast} ],$
\be\label{march17eqn61}
 \Longrightarrow |X(\tau_{\star}+s)|^2\geq |X(\tau_{\star})|^2 + \frac{2}{3} s^2 -   2^{ -7M_t/3+9\alpha M_t} |s|. 
\ee
Let $\tau_1:=  2^{-7M_t/3+10\alpha M_t}< \tau_0$. Recall the decomposition of electromagnetic field in (\ref{march14eqn1}).  From   the rough estimate of electromagnetic field (\ref{march15eqn35}) in Lemma \ref{roughestimateelectro}, which is used for the case   $j\leq (1+\epsilon) M_t$, and the second estimate in  (\ref{feb25eqn72}) in Lemma 
\ref{roughalongchar2}, which is used for the the case   $j\geq (1+\epsilon) M_t$, the following estimate holds for any $\kappa\in [- 2\tau_1, 2\tau_1]$, s.t., $\tau_{\star}+\kappa\in  [T^{\ast},  T^{\ast\ast} ],$
 \be\label{march17eqn63}
 \big||V(\tau_{\star}+\kappa)|- |V(\tau_{\star} )|\big| \lesssim \sum_{j\in\mathbb{Z}_{+}} \sum_{K\in\{E,B\}} \int_{\tau_{\star}}^{\tau_{\star}+\kappa} |K_j(s, X(s))| d  s \lesssim     2^{-7M_t/3+10\alpha M_t} 2^{3(1+3\epsilon)M_t}  \lesssim 2^{ 2M_t/3+11\alpha M_t}. 
 \ee
 For any $\kappa\in  [- \tau_1, \tau_1 ]^c$, s.t., $\tau_{\star}+\kappa\in  [T^{\ast},  T^{\ast\ast} ],$ from the estimates (\ref{march17eqn31}), (\ref{march17eqn32}), which are used for $\tau\in  [- \tau_0, \tau_0 ]^c$  and  the estimate (\ref{march17eqn61}), which is used when $\tau\in  [- \tau_1, \tau_1 ]^c\cap[- \tau_0, \tau_0 ] $,   we have
 \[
 |X(\tau_{\star}+\kappa)| \gtrsim |X(\tau_{\star})|+|\kappa|.
 \]
 From the above estimate,  the rough estimate of the electromagnetic field (\ref{march3eqn51}) in Lemma  \ref{roughestimateofelectromag} , the estimate (\ref{march14eqn41}) in Proposition \ref{mainproposition}, the following estimate holds for any  $\kappa\in  [ -2\tau_1, 2\tau_1 ]^c$, s.t., $\tau_{\star}+\kappa\in  [T^{\ast}, T^{\ast\ast} ],$
 \be\label{march17eqn64}
  \big||V(\tau_{\star}+\kappa)| -  |V(\tau_{\star}+sign(\kappa)\tau_1 )| \big|\lesssim 2^{M_t/3+7\alpha M_t} +\big| \int_{ \tau_{\star}+sign(\kappa)\tau_1 }^{\tau_{\star}+  \kappa} \frac{2^{(1-\alpha+\epsilon)M_t}}{\min\{|X(\tau_{\star})|+|s-\tau_{\star}|,1\}}  d s\big| \lesssim 2^{(1-\alpha+2\epsilon)M_t }.
 \ee

 To sum up, in whichever case, from the estimates (\ref{march17eqn81}), (\ref{march17eqn62}), (\ref{march17eqn63}),  and (\ref{march17eqn64}), the following estimate holds for any $t_1, t_2\in  [T^{\ast},  T^{\ast\ast} ]\subset[0,t]$,
 \be
   \big||V(t_1)| -   |V(t_2)| \big|\lesssim2^{(1-\alpha)M_t+2\epsilon M_t}< 2^{(1-2\beta)M_t},\ee
   \be
    \Longrightarrow \frac{3}{4} 2^{(1-\beta) M_t-1}  \leq \inf_{s \in [T^{\ast}, T^{\ast}+\epsilon]}   |V(s)|\leq \sup_{s \in [T^{\ast}, T^{\ast}+\epsilon]}  |V(s)| \leq   \frac{5}{4}2^{(1-\beta)M_t-1}  
 \ee
Hence improving the bootstrap assumption in (\ref{march12eqn31}). Therefore, we can extend the size of $\epsilon$ such that $T^{\ast\ast} =t$.  To sum up, in whichever case, we have $\sup_{s   \in [0, t]}   |V(s )| <     2^{(1-\beta)M_t  } .  $ Hence finishing the proof of (\ref{jan13eqn11}).

\end{proof}

 \noindent \textit{Proof of Theorem} \ref{maintheorem}  :\qquad Recall (\ref{jan18eqn1}).   
From the conservation law (\ref{conservationlaw}), we have 
\be\label{march20eqn11}
 \big| \int_{\R^3}\int_{|v|\leq 2^{(1-\beta+\epsilon)M_t }}(1+|v|)^{N_1} f(t,x,v)   d x dv\big|\lesssim  2^{(N_1-1)(1-\beta+\epsilon)M_t}\leq (\tilde{M}_{N_1}(t))^{(1-\beta+\epsilon)}. 
\ee

Recall the definition of the majority set $R(t,s)$ in (\ref{march19eqn31}). From the relation (\ref{jan13eqn11}) in Proposition \ref{mainprop}, we have  $|X(0;t,x ,v ) |+|  V(0;t,x ,v ) |\gtrsim 2^{\beta M_t }$ if    $|x|\gtrsim 2^{\beta M_t  } $ or $|v|\gtrsim 2^{(1-\beta)M_t}$. Therefore, the polynomial decay of  the initial data in (\ref{assumptiononinitialdata}) implies that the following estimate holds if $|v|\gtrsim 2^{(1-\beta)M_t}$, 
\[
|f(t,x,v)|=| f_0(X(0;t,x ,v ), V(0;t,x ,v ))| \lesssim (1+|X(0;t,x ,v )|+|V(0;t,x ,v )|)^{-N_0}\lesssim 2^{-10N_1 M_t/3}.
\]
Moreover, if $|x|\geq 2^{2\epsilon M_t}$, then from the estimate (\ref{march17eqn11}), we have
\[
|X(t,0,x,v)|\geq |x| - t\geq |x|- 2^{\epsilon M_t}\gtrsim (1+|x|),
\]
If  $|v|\geq  2^{3(1+4\epsilon)M_t}$, then the following estimate holds from the equation (\ref{jan12eqn21}) and the first  estimate  in (\ref{feb25eqn72}) in Lemma \ref{roughalongchar2}, 
\[
|V(t,0,x,v)|\geq |v|- \int_{0}^t \big( |E(s,X(s)| + |B(s,X(s)|\big)d s \gtrsim (1+ |v|). 
\]
Therefore, from the above three estimates and the assumption of initial data in (\ref{assumptiononinitialdata}),   the following estimate holds if  $|v|\gtrsim 2^{(1-\beta)M_t}$   regardless the size of $|x|$, 
\be\label{feb1eqn2}
|f(t,x,v)|= |f_0(X(t,0,x,v),V(t,0,x,v))|\lesssim (1+|x|)^{-4}(1+|v |)^{- N_1-4}.  
\ee
Therefore, from the above estimate, we have
\be\label{march20eqn12}
 \big| \int_{\R^3}\int_{|v|\geq 2^{(1-\beta+\epsilon)M_t}}(1+|v|)^{N_1} f(t,x,v)   d x dv\big| \lesssim 1.
\ee
To sum up, from the estimates (\ref{march20eqn11}) and (\ref{march20eqn12}), we have
\[
M_{N_1}(t)\lesssim \big( \tilde{M}_{N_1}(t) \big)^{  1-\beta +\epsilon }. 
\]
Since the above estimate holds for any $t\in [0, T)$ and $\tilde{M}_n(t)$ is an increasing function with respect to $t $, the following estimate holds for any $s\in [0, t]$, 
\[
M_{N_1}(s)\lesssim \big( \tilde{M}_{N_1}(s) \big)^{   1-\beta  +\epsilon}\leq \big( \tilde{M}_{N_1}(  t) \big)^{  1-\beta+\epsilon },
\]
Hence
\be\label{march21eqn21}
\tilde{M}_{N_1}(t)=\sup_{s\in [0, t]}M_{N_1}(s)+  (1+t)^{N_1^2}  \lesssim \big( \tilde{M}_{N_1}(  t) \big)^{  1-\beta+\epsilon }+  (1+t)^{ N_1^2}, 
 \quad \Longrightarrow \tilde{M}_{N_1}(t)\lesssim   (1+t)^{N_1^2}. 
\ee
 Therefore, from the above estimate and  the first   estimate  in (\ref{feb25eqn72}) in Lemma \ref{roughalongchar2}, we have
\be\label{march21eqn22}
\sup_{x_0, v_0\in \R^3}\sup_{t\in[0,T)} \sum_{j\in \Z_{+}}\int_{0}^{T}    |K_j(s, X(s;t,x_0,v_0 ))|  ds  \lesssim   (1+T)^{4{N_1} }. 
 \ee
  Hence finishing the proof of our desired estimate (\ref{march13eqn21}) and the theorem from the the decomposition of the electromagnetic field in (\ref{march14eqn1}). 
 \qed
 
\section{Rough estimates of the electromagnetic field}\label{roughcontrol}

In this section, we provide several rough estimates for the  electromagnetic field in terms of the moment of the distribution function, which will be used as basic tools in the next section for more sophisticated analysis of the increment of the sizes of characteristics. In particular, as stated in (\ref{march3eqn51}) and (\ref{feb25eqn72}), we give a point-wise estimate for the electromagnetic field and give an upper bound for the targeted quantity in (\ref{march13eqn21}) respectively.

To prove our desired rough estimates, for simplicity,  we don't distinguish the frequencies of the electromagnetic field. The Glassey-Strauss decomposition as stated in the following Lemma  is very convenient and  useful.

\begin{lemma}\label{glasseystraussdec}
For any $j\in \Z_{+}$, the following decomposition holds 
\be\label{gsdecomposition}
K_j(t,x)=   \sum_{-j\leq l\leq 2, l\in \mathbb{Z}}  \int_0^t   K_{S;j  }^l(t,s,x) + K_{T;j }^l (t,s,x) d s, \quad K\in \{E, B\}, 
\ee
where     
\be\label{feb23eqn3}
E_{T;j}^{l}(t,s,x):=   \int_{\R^3} \int_{\mathbb{S}^2} \frac{(1-|\hat{v}|^2)(\hat{v}+\theta)  }{(1+\hat{v}\cdot \theta)^2} f(s, x+(t-s)\theta, v)   \varphi_j(v)\varphi_{l;-j}(\angle(v, -\theta))  d \theta d v   ,
\ee
\be\label{feb23eqn4}
B_{T;j,l}^{l}  (t,s,x):=     \int_{\R^3} \int_{\mathbb{S}^2}   \frac{(1-|\hat{v}|^2) (\theta\times \hat{v})  }{(1+\hat{v}\cdot \theta)^2} f(s, x+(t-s)\theta, v)   \varphi_j(v)\varphi_{l;-j}(\angle(v, -\theta)) d \theta d v  ,
\ee
\be\label{feb23eqn5}
E_{S;j }^{l}(t,s,x):=  \int_{\R^3} \int_{\mathbb{S}^2} (t-s) K(s, x+(t-s)\theta, v)\cdot  \nabla_v\big(\frac{ \varphi_j(v)(\hat{v}+\theta) }{1+\hat{v}\cdot \theta}\big)  \varphi_{l;-j}(\angle(v, -\theta))     f(s, x+(t-s)\theta,v) d \theta d v  , 
\ee
\be\label{feb23eqn6}
B_{S;j }^{l}(t,s,x) =   \int_{\R^3} \int_{\mathbb{S}^2} (t-s)   K(s, x+(t-s)\theta, v) \cdot \nabla_v\big( \frac{ \varphi_j(v) (\theta\times \hat{v})  }{(1+\hat{v}\cdot \theta) } \big)\varphi_{l;-j}(\angle(v, -\theta))     f(s, x+(t-s)\theta,v) d \theta d v . 
\ee

\end{lemma}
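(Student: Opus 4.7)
The plan is to start from the Duhamel formulas (\ref{march14eqn4})--(\ref{march14eqn5}) for $E_j(t,x)$ and $B_j(t,x)$, which represent the fields as integrals over the backward light cone of the derivatives $\partial_s f$, $\nabla_x f$, and $\hat v \times \nabla_x f$ evaluated at $(s, x+(t-s)\theta, v)$, and to rewrite the integrands so that no derivatives fall on $f$. The two available levers are (i) the Vlasov equation $\partial_s f + \hat v \cdot \nabla_x f = -K\cdot \nabla_v f$ evaluated at the light-cone point, which trades the \emph{transport} combination of derivatives of $f$ for a force-times-velocity-gradient of $f$, and (ii) integration by parts on the sphere $\mathbb{S}^2$ in $\theta$, which moves tangential spatial derivatives of $f$ onto the symbol factors, using that $\nabla_x^{\perp\theta} f\bigl|_{y=x+(t-s)\theta} = (t-s)^{-1}\nabla_\theta^{\mathrm{tan}}\bigl[f(s,x+(t-s)\theta,v)\bigr]$.

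I would carry out the proof in the following order. First, I would derive the pointwise algebraic identity
\begin{equation*}
\hat v\,\partial_s f + \nabla_x f \;=\; \frac{\hat v+\theta}{1+\hat v\cdot\theta}\,(\partial_s f + \hat v\cdot\nabla_x f) + \mathcal D_\theta f, \qquad \hat v \times \nabla_x f \;=\; \frac{\theta\times\hat v}{1+\hat v\cdot\theta}\,(\partial_s f + \hat v\cdot\nabla_x f) + \mathcal D'_\theta f,
\end{equation*}
where $\mathcal D_\theta, \mathcal D'_\theta$ are first-order differential operators consisting only of tangential derivatives in $\theta$ on $\mathbb{S}^2$; this is checked by contracting both sides with $\theta$, which forces the remainders to be tangential. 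Second, plugging the identity into (\ref{march14eqn4})--(\ref{march14eqn5}) and applying Vlasov to the first piece produces $-(t-s)\,\Phi(v,\theta)\,K\cdot\nabla_v f\,\varphi_j(v)$ with $\Phi(v,\theta) = (\hat v+\theta)/(1+\hat v\cdot\theta)$; an integration by parts in $v$ then transfers $\nabla_v$ onto the symbol and yields the source contributions $E_{S;j}^l, B_{S;j}^l$ after inserting the angular partition $1 = \sum_{-j\leq l\leq 2}\varphi_{l;-j}(\angle(v,-\theta))$. Third, integrating $\mathcal D_\theta f$ and $\mathcal D'_\theta f$ by parts on the boundaryless $\mathbb{S}^2$ yields the transport contributions $E_{T;j}^l, B_{T;j}^l$: the factor $(t-s)^{-1}$ from converting a tangential-in-$\theta$ spatial derivative into a sphere gradient cancels the $(t-s)$ coming from Kirchhoff's formula, explaining why the transport terms carry no $(t-s)$ factor; the characteristic weight $(1-|\hat v|^2)/(1+\hat v\cdot\theta)^2 = 1/[(1+|v|^2)(1+\hat v\cdot\theta)^2]$ arises from the sphere-differentiation of $(1+\hat v\cdot\theta)^{-1}$. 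The angular partition with lower threshold $n=-j$ in (\ref{cutoffwith}) matches the natural scale $1-|\hat v|\sim 2^{-2j}$ on $\mathrm{supp}\,\varphi_j(v)$: below the angular scale $\angle(v,-\theta)\sim 2^{-j}$ the denominator $1+\hat v\cdot\theta\sim 2^{-2j}$ cannot be further resolved, so no finer decomposition is useful, while the cap $l\leq 2$ merely covers all of $\mathbb{S}^2$.

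The main obstacle I anticipate is the careful derivation of the algebraic identity and the explicit computation of the tangential operators $\mathcal D_\theta, \mathcal D'_\theta$, checking that the sphere integration by parts reproduces exactly the stated coefficient $(1-|\hat v|^2)/(1+\hat v\cdot\theta)^2$ without spurious lower-order residuals. A subsidiary concern is that no boundary contributions in $s$ arise in the transport pieces: the $(t-s)^{-1}$ gained from the sphere-derivative conversion absorbs the Kirchhoff weight $(t-s)$ exactly, so one never integrates by parts in $s$ and no $f_0$ boundary terms appear (the initial data of the electromagnetic field have already been split off into $K_{\mathrm{free}}$ in (\ref{march14eqn1})--(\ref{march14eqn2})). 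Once the identity and the angular integration by parts are settled, the explicit formulas (\ref{feb23eqn3})--(\ref{feb23eqn6}) follow by routine unwinding of $\nabla_v[\varphi_j(v)\Phi(v,\theta)]$ and its cross-product analogue.
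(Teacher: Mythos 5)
Your overall plan (substitute the Vlasov equation for the transport part, integrate by parts in $v$ to get the ``$S$'' terms, and integrate by parts in the angular/cone variables to get the ``$T$'' terms) is the classical Glassey--Strauss argument, which is exactly what the paper invokes: its proof of this lemma is a one-line citation to \cite{glassey2}[Theorem 3], applied to the velocity-localized pieces $K_j$ followed by the angular dyadic decomposition in $\angle(v,-\theta)$. However, your central algebraic identity is false as stated, and this is a genuine gap rather than a cosmetic one. Write $S:=\partial_s+\hat v\cdot\nabla_x$ and compute the remainder $R_i:=\hat v_i\partial_s+\partial_{x_i}-\frac{\hat v_i+\theta_i}{1+\hat v\cdot\theta}\,S$. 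Its $\partial_s$-coefficient is $\frac{\hat v_i(\hat v\cdot\theta)-\theta_i}{1+\hat v\cdot\theta}$, which does not vanish (at $v=0$ it equals $-\theta_i$); likewise for the magnetic case the main term $\frac{\theta\times\hat v}{1+\hat v\cdot\theta}S$ introduces a $\partial_s$ that the remainder must carry. Since a tangential derivative in $\theta$ of the composite $f(s,x+(t-s)\theta,v)$ produces only the components of $\nabla_x f$ orthogonal to $\theta$ and never $\partial_s f$, the remainders $\mathcal D_\theta,\mathcal D'_\theta$ cannot consist ``only of tangential derivatives in $\theta$''; your contraction-with-$\theta$ check tests the free vector index, not the derivative directions, and in fact $\theta\cdot R=(\hat v\cdot\theta-1)\partial_s+(\theta-\hat v)\cdot\nabla_x\neq 0$ anyway. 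One can kill either the $\partial_s$-component or the radial $\theta\cdot\nabla_x$-component of the remainder by the choice of the coefficient of $S$, but not both, so a purely spherical integration by parts cannot close the argument.

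The correct bookkeeping (as in \cite{glassey2}) uses the full cone derivative $T_i=\partial_{x_i}-\theta_i\partial_s$, i.e. the genuine $y$-gradient of $f(t-|x-y|,y,v)$, and integrates by parts over the solid backward ball $\{|y-x|\le t\}$ rather than only over each sphere. This has two consequences that your proposal explicitly disclaims: (i) the kernel $\frac{(1-|\hat v|^2)(\hat v+\theta)}{(1+\hat v\cdot\theta)^2}$ in (\ref{feb23eqn3})--(\ref{feb23eqn4}) arises from the derivative hitting both the $\theta$-dependence of $\frac{\hat v+\theta}{1+\hat v\cdot\theta}$ and the weight $|x-y|^{-1}$, not merely from sphere-differentiation of $(1+\hat v\cdot\theta)^{-1}$; and (ii) boundary terms on the initial sphere $\{|y-x|=t\}$ involving $f_0\varphi_j$ do appear (they are part of the data term in Glassey--Strauss' representation), so your claim that no $s$-boundary/$f_0$ contributions arise is precisely the point that fails and must instead be addressed (either by carrying these data terms explicitly or by the grouping convention of the cited theorem). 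As it stands, the proposal would not reproduce (\ref{gsdecomposition})--(\ref{feb23eqn6}); repairing it essentially means redoing the Glassey--Strauss computation that the paper cites.
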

\begin{proof}
The desired decomposition (\ref{gsdecomposition}) follows from redoing the Glassey-Strauss decomposition  first for $K_j, K\in\{E,B\},$ and then used a dyadic decomposition with threshold $-j$ for the angular between $v$ and $-\theta$.  See \cite{glassey2}[Theorem 3] for more details. 
\end{proof}

\begin{lemma}\label{roughestimateofelectromag}
For any $x\in \R^3/\{0\},s\in [0, t]$, the following estimate holds  
\be\label{march3eqn51}
\sum_{j\in \mathbb{Z}_+}\sum_{K\in\{E,B\}}|K_j(s,x)|  \lesssim \frac{2^{(1+\epsilon) M_t} }{r_{-}}, \quad r_{-}:=\min\{1,r\}, \quad  \epsilon:=60/{N_1}=6\times 10^{-4}. 
\ee
Moreover, we have the following estimate as a byproduct, 	
\be\label{march4eqn1}
 \sum_{j\in \mathbb{Z}_+,  j\notin[(1-\alpha)M_t, (1+\epsilon) M_t]    }\sum_{K\in \{E, B\}}  K_j(s)\lesssim \frac{ 2^{(1-\alpha+\epsilon)M_t}}{r_{-}}, \quad \alpha:=1/100=3\beta.
\ee
\end{lemma}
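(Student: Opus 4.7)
\medskip
\noindent\textbf{Proof plan.} The starting point is the Glassey--Strauss decomposition (Lemma \ref{glasseystraussdec}), which writes $K_j$ as a sum over $l\in[-j,2]$ of time-integrals of the transport-type pieces $K_{T;j}^{l}$ and the source-type pieces $K_{S;j}^{l}$. On the angular support $\angle(v,-\theta)\sim 2^l$ with $|v|\sim 2^j$, the singular prefactor satisfies $(1-|\hat v|^2)/(1+\hat v\cdot\theta)^{2}\lesssim 2^{-2j-2l}$ and $|\hat v+\theta|,|\theta\times\hat v|\lesssim 2^l$, so the transport kernel is bounded by $2^{-2j-l}$, while the source kernel, after $\nabla_v$ is applied, is bounded by $(t-s)\cdot 2^{-j-l}$ times a pointwise factor of $|K(s,\cdot)|$.

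For the transport terms I would exploit the joint radial symmetry of $f$ through estimate \eqref{feb29eqn1}. The key manipulation is to reparametrize $\theta$ around $-\tilde v$, writing $\theta=-\tilde v\cos\beta+\omega^{\perp}\sin\beta$ with $\omega^\perp\in S_v$ and $\beta\lesssim 2^l$, so that $x+(t-s)\theta=x-(t-s)\cos\beta\,\tilde v+(t-s)\sin\beta\,\omega^{\perp}$ takes exactly the form required by the lemma. Integrating $\omega^\perp\in S_v$ and $v\in\R^3$ simultaneously then delivers the decisive $\big((2^l+|\tilde x\times\omega_0|)\,r\,(t-s)\,|r-(t-s)|\big)^{-1}$ gain, combined with an $L^1_{x,v}$ factor on $f\varphi_j$. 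Collecting $2^{-2j-l}$, the kernel gain, and the $v$-cone volume $2^{3j+2l}$, I expect each $(j,l)$-slice to contribute at most $2^{j}\,\|f\varphi_j\|_{L^1_{x,v}}^{1/p}$ (times log-type time factors and $1/r_-$) with $p$ to be optimized.

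For the source terms the integrand contains $K(s',y)f(s',y,v)$ itself; I would substitute the same decomposition back into $K(s',\cdot)$ (the standard Glassey--Strauss iteration), using the extra $(t-s')$ that multiplies the $\nabla_v$-kernel to gain each time, and closing the finitely-many-step loop by a Gronwall-type argument on $[0,T)$ combined with the local-wellposedness bound on $\|K(s)\|_{L^\infty}$ already guaranteed by hypothesis. This isolates the $T$-contribution as the genuine obstruction and reduces the $S$-contribution to it. Summing $l\in[-j,2]$ costs at most a factor $j$, which is absorbed in $2^{\epsilon M_t}$.

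The final step is the $j$-summation, which I would split at $j\sim M_t$. For $j\le(1-\alpha)M_t$ I use the $L^\infty$-conservation of $f$ to bound $\|f\varphi_j\|$ by $2^{3j}$, summing geometrically to $\lesssim 2^{(1-\alpha)M_t}/r_-$. For $j\ge(1+\epsilon)M_t$, the moment inequality $\|f\varphi_j\|_{L^1_{x,v}}\lesssim 2^{-N_1 j}\tilde M_{N_1}(t)$ combined with $2^{N_1 M_t}\sim\tilde M_{N_1}(t)$ produces geometric decay $2^{-N_1(j-M_t)}$, summing to $O(1)$. The middle band $j\in[(1-\alpha)M_t,(1+\epsilon)M_t]$ is controlled via $\|f\varphi_j\|_{L^1}\lesssim 1$ and furnishes the dominant $2^{(1+\epsilon)M_t}/r_-$ in \eqref{march3eqn51}. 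The improved estimate \eqref{march4eqn1} then follows automatically from the same split because excluding the middle band discards precisely the dominant contribution and leaves the two tails that sum to $\lesssim 2^{(1-\alpha+\epsilon)M_t}/r_-$. The delicate point I expect to be the main obstacle is the clean application of the radial-symmetry lemma to the $T$-integrand, since the cap direction $\omega_0=-\tilde v$ depends on the very variable being integrated: one has to execute the $\theta$-reparametrization first, track the time singularity $1/|r-(t-s)|$ through the $s'$-integral, and keep the book-keeping compatible with the joint $SO(3)$-invariance used in \eqref{feb29eqn1}.
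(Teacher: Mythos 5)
Your treatment of the transport part is essentially the paper's: bound the kernel on the dyadic angular support, then feed the sphere/cone integral of $f$ into the joint-radial-symmetry estimate (\ref{feb29eqn1}), take the minimum with the plain volume bound, and split the $j$-sum exactly as you describe (the byproduct (\ref{march4eqn1}) is indeed obtained in the paper by rerunning the same case analysis with the middle band removed and the excision parameters re-optimized). Two caveats there: your kernel sizes are too optimistic for $l<0$ — on the support one only has $1+\hat v\cdot\theta\sim 2^{-2j}+2^{2l}$, so the $T$-kernel is genuinely of size $2^{-2j-3l}$ (the paper relaxes it to $2^{-j-2l}$) and the $S$-kernel of size $2^{-j-2l}$, not $2^{-2j-l}$ and $2^{-j-l}$ as you claim; this does not break the scheme, but the bookkeeping must be redone with the correct powers of $2^{l}$.

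The genuine gap is your treatment of the source terms $K^l_{S;j}$. You propose to re-insert the Glassey--Strauss representation into the field factor and close by a Gronwall-type loop using ``the local-wellposedness bound on $\|K(s)\|_{L^\infty}$ already guaranteed by hypothesis''. No such uniform bound is available on $[0,t]\subset[0,T)$: producing quantitative field control in terms of $M_t$ is precisely what this lemma (and the whole paper) is for, so invoking it is circular, and local well-posedness only gives unquantified bounds on compact subintervals, which cannot yield the specific size $2^{(1+\epsilon)M_t}/r_{-}$. Moreover, a Gronwall closure on $\sup_x|K|$ would not by itself produce the radial gain $1/r$ that the later arguments (e.g.\ Lemma \ref{roughalongchar2} and the bootstrap in Proposition \ref{mainprop}) require. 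The paper's key device, which your plan is missing, is much simpler: in (\ref{feb23eqn5})--(\ref{feb23eqn6}) apply Cauchy--Schwarz in $(\theta,v)$, splitting $K f$ as $K\sqrt f\cdot\sqrt f$, put the field factor in $L^2$ using the conservation law (\ref{conservationlaw}) (so $\|E(\tau)\|_{L^2}+\|B(\tau)\|_{L^2}\lesssim 1$ with no a priori pointwise information), and apply the radial sphere-average estimate (\ref{march6eqn1}) to the radial function $|E|^2+|B|^2$ and the estimate (\ref{feb29eqn1}) to $f$; this yields (\ref{feb29eqn3}), which carries the same $\big(r|s-\tau||r-(s-\tau)|\big)^{-1}$ gain as the transport part and lets the $S$-terms be integrated in time exactly like the $T$-terms, with no iteration, no Gronwall, and no assumed field bound. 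Without this (or an equivalent non-circular mechanism), your argument for the $S$-part does not close.
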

\begin{proof}
 Recall the decomposition of electromagnetic field in (\ref{march14eqn1}) and the Glassey-Strauss decomposition in (\ref{gsdecomposition}). 

 We first estimate the ``T'' part. Recall (\ref{feb23eqn3}) and (\ref{feb23eqn4}).  Note that, from the estimate (\ref{feb29eqn1}) in Lemma \ref{basicestimateint}, the estimate (\ref{feb25eqn12}), and the volume of support of $v$ and $\theta$, we have
\be\label{feb29eqn2}
| E_{T;j }^{l} (s,\tau,x) | + | B_{T;j }^{l} (s,\tau,x) | \lesssim 2^{-j-2l} \min\{\frac{1}{r|s-\tau||r-(s-\tau)|}   \min\{2^{-j}, 2^{- {N_1} j+ {N_1} M_t}  \}, 2^{3j+2l}\}.
\ee

Now we estimate the ``S'' part. Recall (\ref{feb23eqn5}) and (\ref{feb23eqn6}). From the Cauchy-Schwarz inequality and  the estimates in Lemma \ref{basicestimateint}, the estimate (\ref{feb25eqn12}), and the volume of support of $v$, we have
\[
| E_{S;j }^{l} (s,\tau,x) | + | B_{S;j }^{l} (s,\tau,x) | \lesssim 2^{-j-2l}    |s-\tau|\big(\int_{\R^3} \int_{\mathbb{S}^2}  f (\tau, x+(s-\tau)\theta, v)  \varphi_j(v) \varphi_{l; -j}(\angle(v, -\theta)) d v d \theta  \big)^{1/2} \]
\[
\times \big(\int_{\R^3} \int_{\mathbb{S}^2} (|E(\tau, x+(s-\tau)\theta)|^2 + |B(\tau, x+(s-\tau)\theta)|^2) \varphi_j(v) \varphi_{l; -j}(\angle(v, -\theta)) d v d \theta  \big)^{1/2}
\]
\be\label{feb29eqn3}
\lesssim 2^{-j-2l}  |s-\tau|\big(  \frac{2^{3j +2l}}{r|s-\tau||r-(s-\tau)|} \big)^{1/2} \big[     \min\{ \frac{  \min\{2^{-j},  2^{- {N_1} j+ {N_1} M_t}   \}}{r|s-\tau||r-(s-\tau)|}, 2^{3j+2l}\}\big]^{1/2}. 
\ee
Based on the possible size of $j$, we separate into two cases as follow.

\noindent \textbf{Case $1$:}\quad If $ j\geq (1+\epsilon/2)M_t.$ \quad From the estimate (\ref{feb29eqn2}) and the estimate (\ref{feb29eqn3}), we have
\[
\sum_{j\in \mathbb{Z}_{+},  j\geq (1+\epsilon/2)M_t} \sum_{l\in[-j,2]\cap \mathbb{Z}} \sum_{K\in\{E,B\}} \int_0^s |K_{T;j }^{ l}(s,\tau,x)|  + | K_{S;j }^{ l} (s,\tau,x) |  d \tau
\]
\[
\lesssim  \sum_{j\in \mathbb{Z}_{+},  j\geq (1+\epsilon/2)M_t}  (1+j)\big[ \int_{0}^{s} \big(2^{2j}\big)^{9/10} \big( \frac{2^j 2^{-{N_1} j + {N_1} M_t} }{r|s-\tau||r-(s-\tau)|}\big)^{1/10} d\tau \]
\be\label{march3eqn53}
 + \int_{0}^{s} (s-\tau)^{1/4} 2^{3j/2}\big( \frac{1}{r |r-(s-\tau)|}\big)^{1/2} \big( \frac{ 2^{-  {N_1} j +  {N_1} M_t}}{r |r-(s-\tau)|}\big)^{1/4}\big( 2^{3j+2l}\big)^{1/4} d s \lesssim \frac{1}{r_{-}^{3/4}}.
\ee

\noindent \textbf{Case $2$:}\quad If $j\leq (1+\epsilon/2)M_t.$\qquad Let $\delta:=  2^{-(1+\epsilon/2)M_t} $. From the estimate (\ref{feb29eqn2}), we have
\[
 \sum_{j\in \mathbb{Z}_{+}, j\leq (1+\epsilon/2)M_t } \sum_{l\in[-j,2]\cap \mathbb{Z}} \sum_{K\in\{E,B\}}\int_0^s |K_{T;j }^{ l}(s,\tau,x)| d \tau \lesssim  \sum_{j\in \mathbb{Z}_{+}, j\leq (1+\epsilon/2)M_t } \sum_{l\in[-j,2]\cap \mathbb{Z}}\int_{[s-\delta,s]\cup [s-r-\delta,s-r+\delta] } 2^{2j} d \tau \]
\be\label{march4eqn2}
 + \int_{[0,s-\delta]\cap [s-r-\delta,s-r+\delta]^c} \frac{2^{-2j-2l}}{r|s-\tau||r-(s-\tau)|}  d \tau
\lesssim \frac{ (M_t)^2 }{r \delta} +  M_t 2^{(2+\epsilon)M_t}\delta \lesssim \frac{2^{(1+\epsilon)M_t}}{r_{-}}.
\ee
Let $\tilde{\delta}:= 2^{-nM_t}$. From the estimate (\ref{feb29eqn3}), we have 
\[
  \sum_{j\in \mathbb{Z}_{+}, j\leq (1+\epsilon/2)M_t }\sum_{l\in[-j,2]\cap \mathbb{Z}}  \sum_{K\in\{E,B\}} \int_0^s |K_{S;j  }^{ l}(s,\tau,x)|   d \tau\]
 \[
 \lesssim   \sum_{j\in \mathbb{Z}_{+}, j\leq (1+\epsilon/2)M_t } \sum_{l\in[-j,2]\cap \mathbb{Z}}\int_{[0,s]\cap [s-r-\tilde{\delta},s-r+\tilde{\delta}]^c}  \frac{ 2^{-l}}{r|r-(s-\tau)|} d\tau  \]
\be\label{march3eqn52}
  +  \int_{[0,s]\cap [s-r-\tilde{\delta},s-r+\tilde{\delta}]} \frac{ (1+j) 2^{2j} (s-\tau)^{1/2}}{(r|r-(s-\tau)|)^{1/2}} d\tau\lesssim \frac{  2^{(1+\epsilon/2)M_t} M_t }{r}  + \frac{ 1}{r^{1/2}} \lesssim \frac{  2^{(1+\epsilon)M_t}}{r_{-}}. 
\ee
  To sum up, our desire estimate (\ref{march3eqn51}) holds from the estimates (\ref{march3eqn53}), (\ref{march4eqn2}),  and (\ref{march3eqn52}).

  By using the same strategy used in above estimates and letting  $\delta:= (  \tilde{M}_n(t) )^{-(1 -\epsilon/2)/(n-1)} $ and   $\tilde{\delta}:= (  \tilde{M}_n(t) )^{-1}$, we obtain our desired estimates (\ref{march4eqn1})   after combining the estimate (\ref{march3eqn53}) and  changing the range of the summation with respect to $j$ in (\ref{march3eqn52}) from 
$ j\in \mathbb{Z}_{+},  {j}\leq(1+\epsilon/2)M_t $ to $ j\in \mathbb{Z}_{+},  j\leq (1-\alpha)M_t$. 

\end{proof}

\begin{lemma}\label{roughestimateelectro}
The following estimate holds for any $s\in [0,t], j\in \Z_{+},x\in \R^3,$ s.t.,   $ j\leq (1+\epsilon)M_t$, 
 \be\label{march15eqn35} 
|E_j(s,x)| +|B_{j}(s,x)|\lesssim  2^{3(1+2\epsilon)M_t}.
\ee
\end{lemma}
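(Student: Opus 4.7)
The plan is to apply the Glassey--Strauss decomposition of Lemma~\ref{glasseystraussdec} and bound each resulting piece by brute force. The two auxiliary inputs I will use are the rough pointwise estimate $\sum_{j'\in\Z_+}\bigl(|E_{j'}(\tau,y)|+|B_{j'}(\tau,y)|\bigr)\lesssim 2^{(1+\epsilon)M_t}/|y|_-$ from Lemma~\ref{roughestimateofelectromag}, together with the time bound $s\leq t\lesssim 2^{\epsilon M_t/60}$, which follows from $(1+t)^{N_1^2}\leq \tilde M_{N_1}(t)\sim 2^{N_1 M_t}$ combined with $\epsilon = 60/N_1$.

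For the transport piece $K_{T;j}^l$ I will use the trivial kernel bound in (\ref{feb29eqn2}) together with $\|f\|_\infty\lesssim 1$ and the uniform-in-$\theta$ estimate $\int \varphi_j(v)\varphi_{l;-j}(\angle(v,-\theta))\,dv \lesssim 2^{3j+2l}$, producing $|K_{T;j}^l(s,\tau,x)|\lesssim 2^{-j-2l}\cdot 2^{3j+2l} = 2^{2j}$. After integrating in $\tau\in[0,s]$ and summing the $O(M_t)$ dyadic scales of $l$, the transport contribution totals $\lesssim M_t\, 2^{2j}\,s$, which is much smaller than $2^{3(1+2\epsilon)M_t}$ given $j\leq(1+\epsilon)M_t$ and $s\lesssim 2^{\epsilon M_t/60}$.

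For the source piece $K_{S;j}^l$, which carries the extra weight $(s-\tau)K(\tau,x+(s-\tau)\theta)$, I will first note that the free-wave part $K_{free}$ is uniformly bounded in $L^\infty$ by Sobolev embedding applied to the conserved $H^s\times H^{s-1}$ wave energy ($s\geq 6>3/2$). Lemma~\ref{roughestimateofelectromag} then upgrades to the pointwise bound $|K(\tau,y)|\lesssim 1 + 2^{(1+\epsilon)M_t}/|y|_-$. Substituting this, applying Fubini and the same $v$-integration estimate, I am left with the spherical average
\[
\int_{\S^2}\frac{d\theta}{|x+(s-\tau)\theta|_-}\lesssim 1+\frac{1}{\max(s-\tau,|x|)},
\]
a direct spherical-coordinates computation. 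This yields
\[
|K_{S;j}^l(s,\tau,x)|\lesssim 2^{2j+(1+\epsilon)M_t}(s-\tau)\Bigl(1+\frac{1}{\max(s-\tau,|x|)}\Bigr),
\]
and the weight $(s-\tau)/\max(s-\tau,|x|)\leq 1$ absorbs the apparent singularity after the $\tau$-integration. Summing over $l$, the source contribution is $\lesssim M_t\,2^{3(1+\epsilon)M_t+\epsilon M_t/30}$, which is comfortably $\lesssim 2^{3(1+2\epsilon)M_t}$.

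The main obstacle is the blowup of the rough bound on $|K|$ as $|y|\to 0$: a naive pointwise substitution would fail at the origin and on the light cone. The rescue is that we always integrate $|K|$ over a full sphere of centers, regularizing $1/|y|_-$ to an at-worst logarithmic singularity concentrated on the set $|x|=s-\tau$, and the $(s-\tau)$ factor intrinsic to the Glassey--Strauss source term supplies exactly the power needed to render the residual $1/\max(s-\tau,|x|)$ factor integrable in $\tau$. Once this is handled, the rest is bookkeeping polynomial-in-$M_t$ corrections, all absorbed by the generous $2\epsilon M_t$ slack in the target exponent.
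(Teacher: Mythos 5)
Your argument is correct, and it reaches the bound by a somewhat different (in fact cleaner) route than the paper for the source term. The paper also reduces to the Glassey--Strauss decomposition and treats the ``$T$'' part exactly as you do, via (\ref{feb29eqn2}) and the crude bound $(1+j)|t|2^{2j}$; but for the ``$S$'' part it does not average the field over the sphere. Instead it takes the minimum of two bounds, recorded in (\ref{march15eqn31}): away from the light cone it uses the pointwise bound (\ref{march3eqn51}) with the sup over the sphere, which costs a factor $1/|r-(s-\tau)|$, and near the cone it falls back on the Cauchy--Schwarz estimate (\ref{feb29eqn3}), which trades the conserved $L^2$ field energy and the radial-average Lemma \ref{basicestimateint} for the integrable square-root singularity $\big(\frac{s-\tau}{r|r-(s-\tau)|}\big)^{1/2}$; the time integral is then split into the two regimes, producing the logarithmic factors in (\ref{march15eqn33}), and the whole proof is first reduced to $|x|\leq 1$. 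Your observation that the exact spherical average $\int_{\mathbb{S}^2}|x+(s-\tau)\theta|^{-1}d\theta=4\pi/\max(|x|,s-\tau)$ has \emph{no} light-cone singularity at all (your parenthetical remark about a residual ``logarithmic singularity at $|x|=s-\tau$'' is actually an overstatement --- there is none, as your own displayed inequality shows) lets you skip both the near-cone Cauchy--Schwarz step and the $|x|\leq 1$ reduction, at the price of nothing beyond the same input (\ref{march3eqn51}) and the bound $s\lesssim 2^{\epsilon M_t/60}$, which you justify correctly from (\ref{jan18eqn1}). A small additional merit of your write-up is that you bound the free field $K_{free}$ explicitly (uniform $H^s$ control of the linear wave plus Sobolev), whereas the paper's proof applies (\ref{march3eqn51}) to the full field $K$ inside the $S$-terms without comment; your version closes that small gap. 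The only implicit ingredient you lean on without restating it is the symbol size $2^{-j-2l}$ for the $\nabla_v$-symbols in (\ref{feb23eqn5})--(\ref{feb23eqn6}), but this is exactly the prefactor already established in (\ref{feb29eqn3}) of the preceding lemma, so citing it is legitimate.
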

 \begin{proof}
 Note that our desired estimate (\ref{march15eqn35})
is a trivial consequence of (\ref{march3eqn51}) in Lemma \ref{roughestimateofelectromag} if $|x|\geq 1$. It would be sufficient to consider the case $|x|\leq 1$.  Recall  the Glassey-Strauss decomposition in (\ref{gsdecomposition}). From the obtained estimate (\ref{feb29eqn2}), we have
 \be\label{march15eqn34} 
 \sum_{l\in[-j,2]\cap \Z} \int_{0}^s | E_{T;j }^{l} (s,\tau,x) | + | B_{T;j }^{l} (s,\tau,x) | d \tau\lesssim  (1+j) |t| 2^{2j}  \lesssim 2^{2(1+2\epsilon)M_t}.
 \ee

 Hence, it remains to estimate the ``$S$''   part. Recall (\ref{feb23eqn5}) and (\ref{feb23eqn6}).  From the rough estimate of the electromagnetic field (\ref{march3eqn51}) in Lemma \ref{roughestimateofelectromag} and  the obtained estimate (\ref{feb29eqn3}), we know that the following estimate holds if $\tau \neq s-r$, 
 \be\label{march15eqn31} 
 \sum_{K\in\{E, B\}} |K_{S;j}^l(s,\tau,x)| \lesssim \min\{\frac{(s-\tau)2^{(1+\epsilon)M_t}}{|r-(s-\tau)|} 2^{-j-2l} 2^{3j+2l},2^{2(1+\epsilon)M_t} \big(\frac{s-\tau}{r|r-(s-\tau)|}\big)^{1/2}\}.
 \ee 
From the estimate  (\ref{march15eqn31})  , we have
 \[
  \sum_{K\in\{E, B\}} |K_{S;j}(s,x)| \lesssim \int_{s-r- r/2}^{s-r+ r/2}2^{2(1+\epsilon)M_t} \big(\frac{s-\tau}{r|r-(s-\tau)|}\big)^{1/2} d\tau + \int_{[0,s]\cap [s-r-r/2, s-r+r/2]^c } \frac{ (s-\tau)2^{3(1+\epsilon)M}}{|r-(s-\tau)|}  d\tau
 \]
 \be\label{march15eqn33} 
 \lesssim 2^{2(1+\epsilon)M_t} r^{1/2} + 2^{3(1+ \epsilon)M_t}\big(1+|t\ln(t)|+ r\ln(1/r)\big)\lesssim 2^{3(1+2\epsilon)M_t}. 
 \ee
 To sum up, our desired estimate (\ref{march15eqn35}) holds from the estimates (\ref{march15eqn34}) and (\ref{march15eqn33}).  
 \end{proof}

 \begin{lemma}\label{roughalongchar2}
 For any $t\in [0, T)$, the following estimates hold  for any initial data  ${x}_0,  {v}_0\in \R^3$ of characteristics,  
\be\label{feb25eqn72} 
\sum_{j\in \Z_{+}}\sum_{K\in \{E, B\}}\int_{0}^{t}       |K_j(s, X(s ))|  ds \lesssim   2^{3(1+3\epsilon)M_t}, \quad 
\sum_{j\in \Z, j\geq (1+\epsilon)M_{t}} \sum_{K\in \{E,B\}}\int_{0}^{t}  |K_j(s, X(s ))| ds \lesssim   1.
 \ee
 \end{lemma}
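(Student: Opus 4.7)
\textbf{Proof plan for Lemma \ref{roughalongchar2}.}
The plan is to derive both bounds from the Glassey--Strauss decomposition (Lemma \ref{glasseystraussdec}) together with the pointwise estimates (\ref{feb29eqn2}), (\ref{feb29eqn3}) and (\ref{march15eqn35}). I would establish the second (high-frequency) estimate first, and then deduce the first estimate by splitting the sum at the threshold $j=(1+\epsilon)M_t$.

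\textbf{Step 1 (second estimate).} For $j\geq (1+\epsilon)M_t$, since $j>M_t$, the mass factor $\min\{2^{-j},2^{-N_1 j+N_1 M_t}\}$ appearing in (\ref{feb29eqn2}) equals $2^{-N_1 j+N_1 M_t}$, which is at most $2^{-N_1\epsilon M_t}$. With $N_1\epsilon=60$, each such frequency therefore contributes an enormous gain $\lesssim 2^{-60M_t}$, with further geometric decay in $j$. Applying Glassey--Strauss and Fubini,
\[
\int_0^t |K_j(s,X(s))|\,ds \;\leq\; \sum_{l\in[-j,2]\cap\mathbb{Z}}\int_0^t\int_\tau^t \bigl(|K_{T;j}^{\,l}(s,\tau,X(s))|+|K_{S;j}^{\,l}(s,\tau,X(s))|\bigr)\,ds\,d\tau.
\]
I would estimate the inner $ds$-integral along the characteristic using the same interpolation between the singular kernel bound and the trivial bound $\lesssim 2^{2j}$ that is used to control the high-frequency sum (\ref{march3eqn53}) in Lemma \ref{roughestimateofelectromag}. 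The $(s,\tau)$-integration, the finite sum over $l\in[-j,2]$, and the summation over $j\geq(1+\epsilon)M_t$ together produce only polynomial-in-$2^{M_t}$ losses (indeed of order $2^{O(M_t)}$), which are comfortably defeated by the factor $2^{-60M_t}$. This yields the claimed bound $\lesssim 1$.

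\textbf{Step 2 (first estimate).} I would split the full sum at $j=(1+\epsilon)M_t$. The high-frequency part is controlled by Step 1. For the low-frequency part, the uniform pointwise bound (\ref{march15eqn35}) gives
\[
\sum_{j\leq(1+\epsilon)M_t}\int_0^t|K_j(s,X(s))|\,ds \;\lesssim\; M_t\cdot t\cdot 2^{3(1+2\epsilon)M_t}.
\]
The key observation is that by the very definition of $\tilde{M}_{N_1}(t)$ one has $\tilde{M}_{N_1}(t)\geq (1+t)^{N_1^2}$, hence $2^{M_t}\sim\tilde{M}_{N_1}(t)^{1/N_1}\gtrsim (1+t)^{N_1}$, so $t\lesssim 2^{M_t/N_1}$. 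Since $1/N_1\ll\epsilon$, the factors $M_t$ and $2^{M_t/N_1}$ are both absorbed into $2^{\epsilon M_t}$, producing $\lesssim 2^{3(1+3\epsilon)M_t}$ as required. Combined with Step~1 this yields the first estimate.

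\textbf{Main obstacle.} The technical heart of the argument is Step~1, where one must track the singularities of the Glassey--Strauss kernels in $(s,\tau)$ along the characteristic $X(s)$ and verify that the accumulated polynomial losses do not defeat the gain $2^{-N_1\epsilon M_t}=2^{-60 M_t}$. The bookkeeping is essentially identical to that in the derivation of (\ref{march3eqn53}), where the same interpolation between singular and trivial pointwise bounds was carried out; since the margin available here is many orders of magnitude larger than what is needed in (\ref{march3eqn53}), the argument transports with room to spare.
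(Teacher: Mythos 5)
Your Step 2 (splitting at $j=(1+\epsilon)M_t$, using (\ref{march15eqn35}) for the low frequencies and $1+t\lesssim 2^{M_t/N_1}=2^{\epsilon M_t/60}$ to absorb the factor $t\,M_t$) is exactly how the paper handles the first estimate once the second one is known. The gap is in Step 1. The pointwise bounds (\ref{feb29eqn2})--(\ref{feb29eqn3}) attach the moment gain $2^{-N_1 j+N_1 M_t}$ to the singular factor $\big(r|s-\tau||r-(s-\tau)|\big)^{-1}$, with $r=|X(s)|$; the only $r$-free bound available is the trivial one $\sim 2^{3j+2l}$, which carries no decay in $j$. Consequently any interpolation of the type used in (\ref{march3eqn53}) leaves a strictly positive power of $1/|X(s)|$ in the pointwise estimate of $\sum_{j\geq(1+\epsilon)M_t}|K_j(s,X(s))|$. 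When you then integrate in $s$ along the characteristic, $\int_0^t |X(s)|^{-\theta}\,ds$ is \emph{not} a ``polynomial-in-$2^{M_t}$ loss'': the characteristic can sit at a distance from the origin that is arbitrarily small and completely unrelated to $M_t$ for a time interval of length comparable to $t$ (small velocity, small initial position), so this integral is not bounded by $2^{CM_t}$ for any $C$, and the gain $2^{-60M_t}$ cannot defeat it. In short, a bound evaluated pointwise at $X(s)$ that is singular at $r=0$ cannot yield the uniform-in-characteristic estimate $\lesssim 1$, and this obstruction is precisely why the lemma is not a corollary of Lemma \ref{roughestimateofelectromag}.

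The paper's proof avoids evaluating singular pointwise bounds at $X(s)$ altogether. After Fubini in $(s,\tau)$, for fixed $\tau$ it performs the Pallard change of variables $(s,\theta,\phi)\mapsto X(s)+(s-\tau)\omega$, whose Jacobian is $(X'(s)\cdot\omega+1)(s-\tau)^2\sin\theta$, and applies H\"older: the Jacobian-weighted dual integral is bounded by $(1+t)^2$ (see (\ref{feb25eqn87}) and the line following it), while the main factor becomes an $L^p_x$ norm of the velocity averages $\int f\varphi_j\,dv$ and $\int |K| f\varphi_j\,dv$, the latter controlled via (\ref{march20eqn35}) where the local singularity $r^{-3/4}\psi_{\leq 2}(r)$ is harmless in $L^p_x$. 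Only then is the moment bound (\ref{feb25eqn12}) invoked, and the factor $\big(2^{-N_1 j+N_1 M_t}\big)^{1/p}$ beats $2^{j}2^{3j(1-1/p)}(1+t)^4$ for $j\geq(1+\epsilon)M_t$, giving the sum $\lesssim 1$ in (\ref{march20eqn38}). This averaging along the characteristic (the smoothing effect of the transversality between the backward light cones and the particle trajectory) is the essential ingredient missing from your plan; without it, the bookkeeping you describe cannot close.
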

 \begin{proof}
  Recall the decomposition of electromagnetic field in (\ref{march14eqn1}) and the Glassey-Strauss decomposition in (\ref{gsdecomposition}). Note that, the following estimate holds from the estimate (\ref{march3eqn51}) in Lemma \ref{roughestimateofelectromag}, the estimate (\ref{march3eqn53}), the estimate (\ref{march15eqn35}) in Lemma\ref{roughestimateelectro}, 
  \be\label{march20eqn35}
  \sum_{K\in\{E, B\}} |K(t,x)| \lesssim \big( \frac{1}{r^{3/4}} +2^{3(1+2\epsilon)M_t}  \big)\psi_{\leq 2}(r) + 2^{(1+2\epsilon)M_t } \psi_{\geq 2}(r). 
  \ee
Moreover,  the following rough  estimates hold  straightforwardly, 
\[
 \sum_{K\in \{E, B\}}  \int_{0}^{t} \int_{0}^s      |K_{T;j }^{ l}(s,\tau, X(s ))|    d\tau ds\]
 \be\label{march13eqn31}
  \lesssim     2^{j}\int_{0}^{t} \int_{\tau}^{t}  \int_{\R^3} \int_{0}^{2\pi} \int_0^\pi       f( \tau,  X(s )+(s-\tau) \omega,v)  \varphi_j(v)\varphi_{l;-j}(\angle(v, - \omega))  \sin \theta d\theta d \phi  d v d s   d \tau,
\ee
\[
 \sum_{K\in \{E, B\}}  \int_{0}^{t} \int_{0}^s      |K_{S;j }^{ l}(s,\tau, X(s ))|    d\tau ds
  \lesssim  \sum_{K\in \{E, B\}}      2^{ j }  \int_{0}^{t} \int_{\tau}^{t}  \int_{\R^3} \int_{0}^{2\pi} \int_0^\pi  |K(\tau,  X(s )+(s-\tau) \omega) | 
\]
\be\label{feb25eqn81}
  \times  \varphi_j(v)\varphi_{l;-j}(\angle(v, - \omega))  (s-\tau)  f( \tau,  X(s )+(s-\tau) \omega,v)    \sin \theta d\theta d \phi  d v d s   d \tau, 
\ee  
 As observed by Pallard \cite{pallard2}, for fixed $\tau$, we can do change of coordinates $(s, \theta,\phi)\longrightarrow (X(s )+(s-\tau)\omega)$. As a result of direct computation (see also \cite{pallard2}), the Jacobian of the transformation is $(X'(s)\cdot \omega+1)(s-\tau)^2 \sin  \theta $.
From the H\"older inequality, we have
 \[
\big|  \int_{\tau}^{t}  \int_{0}^{2\pi} \int_{0}^{\pi}h(\tau, X(s ) \omega ) \sin \theta d  \theta  d\phi d s \big| 
 \lesssim  \big| \int_{\tau}^{t}  \int_{0}^{2\pi}  \int_{0}^{\pi}|h(\tau, X(s )+(s-\tau) \omega )|^p (X'(s)\cdot \omega+1)(s-\tau)^2  \sin \theta d  \theta  d\phi d s \big|^{1/p}
 \]
 \be\label{feb25eqn87}
\times \big| \int_{\tau}^{t}  \int_{0}^{2\pi}  \int_{0}^{\pi}\big((X'(s)\cdot \omega+1)(s-\tau)^2\big)^{-q/p} (\sin\theta)^{(1-1/p)q} d  \theta  d\phi d s\big|^{1/q}, \quad \frac{1}{p} + \frac{1}{q}=1. 
 \ee
 Note that the following estimate holds for any $q \in(1, 3/2)$, 
 \[
  \int_{\tau}^{t}  \int_{0}^{2\pi}  \int_{0}^{\pi}\big((X'(s)\cdot \omega+1)(s-\tau)^2\big)^{-q+1}  \sin\theta  d  \theta  d\phi d s \lesssim (1+t)^2. 
 \]
 Let $  p=3/(1-1/n) $  and $q=13$.   From the estimates (\ref{march20eqn35}--\ref{feb25eqn87})  and the estimate (\ref{feb25eqn12}), we have
 \[
  \sum_{j\geq (1+\epsilon)M_{t}} \sum_{K\in \{E, B\}}  \sum_{-j\leq l\leq 2, l\in \mathbb{Z}} \int_{0}^{t} \int_{0}^s     |K_{S;j }^{ l}(s,\tau, X(s ))|  d\tau ds +  \int_{0}^{t} \int_{0}^s     |K_{T;j }^{ l}(s,\tau, X(s ))|  d\tau ds 
  \]
    \be\label{march20eqn31}
 \lesssim   \sum_{j\geq (1+\epsilon)M_{t}} \sum_{K\in \{E, B\}}   2^{ j }  (1+t)^2\big[t \int_0^t  \| \int_{\R^3} |K(\tau, x)| f(\tau, x,v )  \varphi_j(v)  d v\|_{L^p_x}    d\tau 
 + \int_0^t   \| \int_{\R^3}   f(\tau, x,v )  \varphi_j(v)  d v\|_{L^p_x}    d\tau \big]
 \ee
\[
\lesssim    \sum_{j\geq (1+\epsilon)M_{t}} \sup_{\tau \in[0,t]} 2^{ j }  (1+t)^4\big[ 2^{3(1+2\epsilon)M_t}   \| \int_{\R^3}   f(\tau, x,v )  \varphi_j(v)  d v\|_{L^p_x} + \|  \int_{\R^3} r^{-3/4} \psi_{\leq 2}(r)   f(\tau, x,v )  \varphi_j(v)  d v\|_{L^p_x}\big]
\]
\be\label{march20eqn38}
\lesssim   \sum_{j\geq (1+\epsilon)M_{t}}  2^j \big[ 2^{3(1+3\epsilon)M_t}  2^{3j(1-1/p)} \big(    2^{- {N_1} j + {N_1} M_t}    \big)^{1/p} +  2^{3j(1-1/q)} \big(   2^{- {N_1} j + {N_1} M_t}    \big)^{1/q}\big]\lesssim 1. 
\ee
Our desired estimates in (\ref{feb25eqn72}) hold from the above estimate and the estimate (\ref{march15eqn35}) in Lemma (\ref{roughestimateelectro}).  
 \end{proof}

\section{Increment of the sizes of characteristics over time}\label{incrementestimate}

Recall the equations satisfied by the magnitudes of characteristics in (\ref{jan12eqn21}). As stated in the following Proposition, our main goal in this section is to quantitatively control the increment of the size of characteristics over time.

 \begin{proposition}\label{mainproposition}
Under the assumptions in \textup{(\ref{aproiriestimate})} and \textup{(\ref{march12eqn31})}, the following estimate holds for any $t_1, t_2\in [T^{\ast}, T^{\ast\ast} ]\subset[0, t]$, 
\[
\big| \int_{t_1}^{t_2} \tilde{V}(s)  \cdot E(s, X(s  ) ) d s \big| \lesssim   \int_{t_1}^{t_2}  |K(s, X(s))|  \min\{  \frac{2^{- 3 M_t/2+24\alpha M_t}}{|X(s)|_{-} },2^{ M_t/2+3\alpha  M_t}|X(s)|+ 2^{-2M_t /3+8\alpha M_t} \}  ds
\]
\be\label{march14eqn41}
 +  2^{M_t/3  +7\alpha M_t}+  \int_{t_1}^{t_2}   \frac{ 2^{(1-\alpha+\epsilon) M_t}}{|X(s)|_{-}}  d s  , \quad \alpha:=1/100, \quad \epsilon:=60/N_1=6\times 10^{-4},
\ee
\[
\big| \int_{t_1}^{t_2}C_1(X(s), V(s)) \cdot E(s, X(s  ) ) d s \big| + \big|\int_{t_1}^{t_2} C_2(X(s), V(s)) \cdot  B(s,X(s  ) )  d s\big| \lesssim 2^{-(\alpha-2\beta-3\epsilon)  M_t} |t_2-t_1|
\]
\be\label{march16eqn51}
 +   \sum_{i=1,2}|X(t_i)| \min\{ \frac{2^{-2M_t+15\alpha M_t  }}{|X(t_i)|_{-}}, 2^{M_t/2+3\alpha M_t } |X(t_i)| + 2^{ -2M_t/3+ 8\alpha  M_t  }\}, 
\ee
where $C_1(\cdot, \cdot)$ and $C_2(\cdot, \cdot)$ are defined in \textup{(\ref{march17eqn1})}. 
 \end{proposition}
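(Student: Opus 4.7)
The plan is to decompose $K \in \{E, B\}$ via $K = K_{\text{free}} + \sum_j K_j$ as in (\ref{march14eqn1}) and bound the contribution of each piece along the characteristic $X(s)$ separately, according to the size of $j$. For $j \geq (1+\epsilon)M_t$, the second estimate in (\ref{feb25eqn72}) directly gives an $O(1)$ contribution, absorbed by the other terms on the right of (\ref{march14eqn41}). For $j \leq (1-\alpha)M_t$, the point-wise bound (\ref{march4eqn1}) immediately produces the $\int_{t_1}^{t_2} 2^{(1-\alpha+\epsilon)M_t}/|X(s)|_- \, ds$ term. The free part $K_{\text{free}}$ is handled using the Kirchhoff representation (Lemma \ref{Kirchoff}) together with the regularity assumption on $(E_0, B_0, f_0)$; standard $L^2$/dispersive estimates for the linear wave, combined with $|X(s)| \lesssim 2^{\beta M_t+1}$ from (\ref{march17eqn11}), should produce the $2^{M_t/3 + 7\alpha M_t}$ contribution.

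The main work lies in the middle regime $(1-\alpha)M_t \leq j \leq (1+\epsilon)M_t$, where I would apply the Glassey--Strauss decomposition (Lemma \ref{glasseystraussdec}) to split each $K_j$ into $T$-type and $S$-type pieces with further dyadic decomposition in the angle between $v$ and $-\theta$. For the $T$-type piece, the key is the spherical symmetry of $f$: the sphere averages in the Kirchhoff formula are controlled sharply via (\ref{feb29eqn1}) in Lemma \ref{basicestimateint}, and when combined with the $L^\infty_x L^1_v$ bound (\ref{feb25eqn12}) for the $N_1$-moment, this yields contributions depending on $|X(s)|$ in the two different ways appearing inside the $\min\{\cdot,\cdot\}$ on the right of (\ref{march14eqn41})---one sharp when $|X(s)|$ is not too small, the other sharp when $|X(s)|$ is small. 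For the $S$-type piece, the integrand already carries a factor $K(\tau, x+(t-\tau)\theta) f$; applying Pallard's change of coordinates along the characteristic (as in (\ref{feb25eqn87})) rewrites the $s$-integration as a spatial integration and ultimately yields a bound in terms of $|K(s, X(s))|$ itself---exactly the $|K(s, X(s))|$-weighted integral appearing on the right of (\ref{march14eqn41}).

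For estimate (\ref{march16eqn51}) on $C_1 \cdot E + C_2 \cdot B$, the key observation is that by the definitions in (\ref{march17eqn1}) one has $|C_i(X(s), V(s))| \lesssim |X(s)|/\sqrt{1+|V(s)|^2}$, which under the bootstrap (\ref{march12eqn31}) is bounded by $\lesssim 2^{(2\beta-1)M_t+2}$. Multiplying the pointwise bounds used for (\ref{march14eqn41}) by this small factor immediately produces the $2^{-(\alpha-2\beta-3\epsilon)M_t}|t_2-t_1|$ term. The boundary terms $|X(t_i)|\min\{\cdots\}$ suggest that for the middle frequency range, where the straight pointwise bound is too crude, one performs an integration by parts in $s$---either via the identity (\ref{march1eqn2100}) for $\tfrac{d}{ds}(X(s) \cdot \hat{V}(s))$, or by exploiting the oscillation of $K_j$ in its Fourier/Kirchhoff representation. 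The resulting boundary contributions at $s = t_1, t_2$, bounded using the same two-regime analysis as for the $T$-piece in (\ref{march14eqn41}), produce the claimed $|X(t_i)| \min\{\cdots\}$ form.

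The principal obstacle is the middle-frequency analysis, which must simultaneously exploit (i) the polynomial $v$-decay of $f$ recorded by $M_{N_1}$, (ii) the spherical-symmetry gain from Lemma \ref{basicestimateint}, and (iii) the smoothing-along-characteristics effect observed by Klainerman--Staffilani. Balancing these three ingredients while keeping the exponents sharp enough that the resulting bound can close the bootstrap in Proposition \ref{mainprop} is the technical heart of the argument; in particular, the two estimates combined inside each $\min\{\cdot,\cdot\}$ must be proven separately under the correct hypothesis on the size of $|X(s)|$ and only then matched together.
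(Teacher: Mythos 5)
There is a genuine gap, and it sits exactly where the paper's real work is: the middle regime $j\in[(1-\alpha)M_t,(1+\epsilon)M_t]$. Your plan for this regime (Glassey--Strauss $T/S$ decomposition, the radial-symmetry gain of Lemma \ref{basicestimateint}, the moment bound (\ref{feb25eqn12}), and Pallard's change of variables) is precisely the toolkit the paper already uses to prove the rough bounds (\ref{march3eqn51}), (\ref{march4eqn1}) and (\ref{feb25eqn72}), and that toolkit tops out at $2^{(1+\epsilon)M_t}/r_{-}$ for the blocks with $j\sim M_t$: near the singular times $\tau\approx s$ and $\tau\approx s-r$ the sphere average gives no gain and one is forced back to the crude volume bound $2^{-j-2l}\cdot 2^{3j+2l}\sim 2^{2j}$, which is why (\ref{march4eqn1}) excludes exactly this range of $j$. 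An extra factor $2^{(1+\epsilon)M_t}/r_-$ per unit time is too large to produce either the coefficient $2^{(1-\alpha+\epsilon)M_t}$ in (\ref{march14eqn41}) or the decaying coefficient $2^{-(\alpha-2\beta-3\epsilon)M_t}$ in (\ref{march16eqn51}): multiplying the pointwise bound by $|C_i|\lesssim |X(s)|2^{-(1-\beta)M_t}$ gives, for $|X(s)|\sim 2^{\beta M_t}$, a per-unit-time size $\sim 2^{(2\beta+\epsilon)M_t}$, which grows rather than decays, so your "immediately produces the $2^{-(\alpha-2\beta-3\epsilon)M_t}|t_2-t_1|$ term" claim fails in the middle regime (it is only valid for $j\leq(1-\alpha)M_t$ via (\ref{march4eqn1})). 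The paper handles the middle regime by an entirely different, Fourier-side mechanism: the decomposition (\ref{march16eqn41}) in the frequency $k$, the angle $l$ between $\xi$ and $-v$, and the angle $n$ between $\xi$ and $-V(s)$, followed by a normal form transformation (\ref{feb28eqn1}) exploiting the phase $(|\xi|+\hat v\cdot\xi)^{-1}$ and an integration by parts in characteristic time (\ref{feb28eqn68}) exploiting $(|\xi|+\hat V(s)\cdot\xi)^{-1}$, with the bilinear estimates of Lemma \ref{bilinearestimaterough}; none of this is developed in your proposal beyond a closing remark that it is "the technical heart."

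Two specific attributions in your plan are also incorrect. First, the $|K(s,X(s))|$-weighted integral in (\ref{march14eqn41}) does not come from the $S$-part of Glassey--Strauss: there the field is evaluated at the cone points $(\tau, X(s)+(s-\tau)\theta)$, and Pallard's change of variables turns the $(s,\theta)$ integration into a spatial integral over the cone; it never produces the field evaluated along the characteristic. In the paper this term arises from the $G^{n;3}$ contributions (\ref{feb28eqn71}), where $\partial_s$ hits $\hat V(s)$ or the angular cutoff $\varphi_{n;l}(\angle(\xi,-\tilde V(s)))$ and outputs a factor $|K(s,X(s))|$, see (\ref{march24eqn21}), (\ref{march14eqn92}) and (\ref{march16eqn18}). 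Second, the term $2^{M_t/3+7\alpha M_t}$ is not a free-wave contribution; it is the boundary term of the normal form and of the characteristic-time integration by parts, see (\ref{march2eqn53}) and (\ref{march16eqn14}). Likewise, producing the boundary terms $|X(t_i)|\min\{\cdots\}$ in (\ref{march16eqn51}) by "integration by parts via (\ref{march1eqn2100})" would be circular, since (\ref{march1eqn2100}) is the identity that Proposition \ref{mainprop} integrates using this very proposition. So while your treatment of $j\geq(1+\epsilon)M_t$ and $j\leq(1-\alpha)M_t$ matches the paper, the proposal as it stands does not contain a viable mechanism for the middle regime, which is where the statement's specific structure and exponents are actually generated.
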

 \begin{proof}
 To improve presentation, we  postpone the proof of the above two estimates   to the end of this subsection. 
 \end{proof}

Recall the decomposition of electromagnetic field in (\ref{march14eqn1}). Because  the rough estimate of $K_j, K\in \{E, B\}$, in (\ref{march4eqn1}) in Lemma \ref{roughestimateofelectromag} is  sufficient for our purpose, we only need  to consider the uncovered case  $ j\in[ (1-\alpha)M_t , (1+\epsilon)M_t  ]  $. In the rest of this section, we use a Fourier method to prove our desired estimates for the uncovered case.

 First of all, we   introduce the set-up of different frameworks for the electromagnetic field in different scenarios.  Recall the detailed formulas of $E_j(t,x)$ and $B_j(t,x)$ in (\ref{march14eqn4}) and (\ref{march14eqn5}).   After represented them on Fourier side and using the Vlasov equation to substitute $\p_t f$, the following decomposition holds after doing dyadic decompositions for both the frequency and the angle between $\xi$ and $v$, 
\be\label{march16eqn41}
K_j(t,x) = \sum_{k\in \mathbb{Z}} \sum_{l\in [-j,2]\cap \Z} \frac{-i}{2}\big( K_{k;j,l}(t,x) - \overline{ K_{k;j,l}(t,x)}\big), \quad K\in \{E, B\},
\ee
where
\be\label{jan14eqn51}
B_{k;j,l}(t,x) = \int_0^t V_{k;j,l}^b(t,s,x) d s,\quad E_{k;j,l}(t,x) = \int_0^t V_{k;j,l}^{e}(t,s,x) + High_{k;j,l}^{0}(t,s,x) d s,
\ee
\be\label{jan14eqn53}
 V_{k;j,l}^u(t,s,x) =  \int_{\R^3} \int_{\R^3} e^{i x\cdot \xi + i t |\xi| -is |\xi| } m_u(\xi,v)    \widehat{f}(s, \xi, v)  \varphi_{k;j,l}(\xi, v )d \xi  d v  d s, \quad u\in \{e,b\}
\ee
\be\label{feb28eqn11}
 High_{k;j,l}^{0}(t,s,x) := \int_{\R^3} \int_{\R^3} \int_{\R^3} e^{i x\cdot \xi + i t |\xi| -is |\xi| } \frac{1}{  |\xi|} \nabla_v \hat{v}\cdot \big(\widehat{E}(s, \xi-\eta) + \hat{v}\times \widehat{B}(s, \xi-\eta) \big)   \widehat{f}(s, \eta, v) \varphi_{k;j,l}(\xi, v ) d \xi  d\eta  d v  , 
\ee
where the cutoff function $\varphi_{k;j,l}(\xi, v )$ and the symbols $m_u(\xi, v), u\in\{e,b\},$ are defined as follows, 
\be\label{feb28eqn12}
\varphi_{k;j,l}(\xi, v ):=\psi_k(\xi) \varphi_j(v) \varphi_{l;-j}(\angle(\xi, -v)), \quad m_e(\xi, v):=  i \frac{(\hat{v}\cdot \xi)\hat{v}-\xi}{|\xi|}   , \quad m_b(\xi, v):=  -i\frac{\hat{v}\times \xi}{|\xi|}. 
\ee
where the cutoff functions $\varphi_{j}(\cdot)$ and $\varphi_{l;-j}(\cdot)$ are defined in (\ref{cutoffwith}).

   We remind readers that the angular cutoff function in (\ref{feb28eqn12}), which measure the angle between frequency $\xi$ and $v$,  different from the one we used in the Glassey-Strauss decomposition (\ref{gsdecomposition}), which measure the angle in physical space. 

Recall (\ref{mainequation}). To take account of the linear effect, we study the profile $g(t,x,v):=f(t,x+t\hat{v}, v)$ of the distribution function $f(t,x,v)$ as follows, 
\be\label{march19eqn1}
\p_t g(t,x,v)= (E(t, x+t\hat{v})+\hat{v}\times B(t,x+t\hat{v}))\cdot \nabla_v f(t,x+t\hat{v}, v), \quad f(t,x,v)=g(t,x-t\hat{v},v).	
\ee

 To exploit the oscillation with respect to time in some cases, we  will  use the normal form transformation for $ V_{k;j,l}^u(t,s,x)$ in (\ref{jan14eqn53}).  More precisely, after doing integration by parts with respect to $s$,  the following equality holds,
 \[
 \int_0^t V_{k;j,l}^u(t,s,x) d s= \int_0^t \int_{\R^3} \int_{\R^3} e^{i x\cdot \xi + i t |\xi| -is |\xi| - is  \hat{v}\cdot\xi } m_u(\xi,v)    \widehat{g}(s, \xi, v)  \varphi_{k;j,l}(\xi, v )d \xi  d v  d s
 \] 
\be\label{feb28eqn1}
 = End_{k;j,l}^u(t,t,x)-End_{k;j,l}^u(t,0,x) + \int_0^t High^u_{k;j,l}(t,s,x) d s , \quad u\in\{e,b\},
\ee
where
\[
 End_{k;j,l}^u(t,t, x)=   \int_{\R^3} \int_{\R^3} e^{i x\cdot \xi - i t\hat{v}\cdot \xi} \frac{ m_u(\xi, v) \varphi_{k;j,l}(\xi, v )}{|\xi|(|\xi|+\hat{v}\cdot \xi )}   \widehat{g}(t, \xi, v) d \xi  d v  
\]
\be\label{jan17eqn5}
  =  \int_{\R^3} \int_{\R^3} e^{i x\cdot \xi  }  \frac{ m_u(\xi, v) \varphi_{k;j,l}(\xi, v )}{|\xi|(|\xi|+\hat{v}\cdot \xi )}    \widehat{f}(t, \xi, v)  d \xi  d v,
\ee
\be\label{march19eqn2}
 End_{k;j,l}^u(t,0, x)=   \int_{\R^3} \int_{\R^3} e^{i x\cdot \xi +it|\xi|} \frac{ m_u(\xi, v) \varphi_{k;j,l}(\xi, v )}{|\xi|(|\xi|+\hat{v}\cdot \xi )}   \widehat{f}(0, \xi, v) d \xi  d v  ,
\ee
\be\label{jan17eqn6}
 High_{k;j,l}^u(t,s, x):= \int_{\R^3} \int_{\R^3} e^{i x\cdot \xi + i t |\xi| -is |\xi|- i s\hat{v}\cdot \xi} \frac{ m_u(\xi, v)}{|\xi|(|\xi|+\hat{v}\cdot \xi)}   \p_t \widehat{g}(s, \xi, v)  \varphi_{k;j,l}(\xi, v ) d \xi  d v
\ee
\be\label{feb28eqn59}
= \int_{\R^3} \int_{\R^3} \int_{\R^3} e^{i x\cdot \xi + i t |\xi| -is |\xi| } \nabla_v\big( \frac{  m_u(\xi, v)  \varphi_{k;j,l}(\xi, v ) }{|\xi|(|\xi|+\hat{v}\cdot \xi)}    \big)\cdot  \big(\widehat{E}(s, \xi-\eta)+\hat{v}\times \widehat{B}(s, \xi-\eta) \big) \hat{f}(s, \eta, v) d \eta d \xi d v.  
\ee
Since $End_{k;j,l}^u(t,0, x)$ only depends on the initial data, which is regular, it is uniformly bounded over time.

For   simplicity of notation, we formulate $High_{k;j,l}^m(t,s,x), m\in\{0,e,b\},$ in (\ref{feb28eqn11}) and  (\ref{feb28eqn59}),  uniformly as follows, 
\be\label{march4eqn76}
High_{k;j,l}^m(t,s,x)=  \int_{\R^3} \int_{\R^3} \int_{\R^3} e^{i x\cdot \xi + i t |\xi| -is |\xi| } \tilde{m}_{k,j,l;m} ( \xi, v )   \big(\widehat{E}(s, \xi-\eta) +\hat{v}\times \widehat{B}(s, \xi-\eta) \big) \hat{f}(s, \eta, v) d\eta d v d \xi 
\ee 
where
\be\label{feb28eqn74}
 \tilde{m}_{k,j,l;0} ( \xi, v )=\varphi_{k;j,l}(\xi, v )  (  |\xi|)^{-1}\nabla_v \hat{v}, \quad \quad  \tilde{m}_{k,j,l;u} ( \xi, v )=  \nabla_v\big( \frac{ m_u(\xi,v) \varphi_{k;j,l}(\xi, v ) }{|\xi|(|\xi|+\hat{v}\cdot \xi)}    \big),\quad u\in\{e,b\}.
 \ee 

As pointed out by Klainerman-Staffilani \cite{Klainerman3}(see also the smoothing effect pointed out by Bouchut-Golse-Pallard \cite{bouchut}), a very important observation, which will also be used here, is that the   integration of electromagnetic field along the characteristic is smoother than the  electromagnetic field itself. Intuitively speaking, the smoothing effect comes from the different speeds of wave and particles.

 However,   we need to be very careful about the gain and the loss of using the smoothing effect, which, in practice, is  doing integration by parts with respect to ``$s$''. This process will be carried out  in (\ref{feb28eqn68}). 

As a matter of fact, the loss of doing integration over time along the spatial characteristic $X(s)$ depends on    the angle between the frequency and the velocity characteristics $V(s)$, see the resulted symbol in (\ref{feb28eqn72}).  Presumably, the loss  can be very large  because another electromagnetic field, which is the main unknown,  will be introduced  when ``$\p_s$'' hits the velocity characteristics, see (\ref{feb28eqn71}) and the system of equations satisfied by characteristics in (\ref{characteristicseqn}).

  To better measure the loss, we dyadically localize the   angle between the frequency variable and the velocity characteristics $V(s)$, which is fixed at any fixed time $s$. Motivated from the above discussion and the general form of nonlinearities in (\ref{march4eqn76}),  we  study    the following bilinear form, 
\be\label{march10eqn1}
T_{k;j}^{ l,n}(K, f)(s,\tau,x):= \int_{\R^3} \int_{\R^3} \int_{\R^3} e^{i x\cdot \xi + i (s-\tau) |\xi|   }\varphi_{k;j,l}(\xi, v )  m_1 ( \xi, v ) m_2 ( \xi, \omega_0 ) \varphi_{n;l}(\angle(\xi, \omega_0))   \widehat{K}(\tau, \xi-\eta)  \hat{f}( \tau, \eta, v) d\eta d v d \xi,
\ee
where  $\omega_0\in \mathbb{S}^2$ is fixed, $k\in \mathbb{Z}, j\in\Z_{+}$, $l\in [-j,2]\cap \Z$,   $n\in [l, 2]\cap \mathbb{Z}$ ,  $K\in \{E, B\}$, the cutoff function $\varphi_{k;j,l}(\xi)$ was defined  in (\ref{feb28eqn12}), and the symbols $m_1(\xi, v)$ and $m_2(\xi, \omega_0)$ satisfies the following assumption for any $k\in \mathbb{Z}, j\in \mathbb{Z}_{+}$,
\be\label{symbolassumption1}
\sup_{   |a|\leq   N_1^2 }  \sup_{\xi, v\in \R^3}  \big[2^{ak}\big|\big(\tilde{v}\cdot \nabla_\xi\big)^a m_1 ( \xi, v )\big| + 2^{a(k+l)}\big|\big(\tilde{v}\times  \nabla_\xi\big)^a m_1 ( \xi, v )\big|\big]\varphi_{k;j,l}(\xi, v )\lesssim \| m_1\|_{\mathcal{S}^\infty_{k,j,l}},
\ee
\be\label{symbolassumption2}
\sup_{   |a|\leq    N_1^2 } \sup_{\xi \in \R^3, \omega_0\in \mathbb{S}^2}  \big[2^{ak}\big|\big(\omega_0\cdot \nabla_\xi\big)^a m_2( \xi, \omega_0 )\big| + 2^{a(k+n)}\big|\big( \omega_0\times  \nabla_\xi\big)^a m_2 ( \xi, \omega_0 )\big|\big] \psi_{[k-4,k+4]}(\xi) \varphi_{n;l}(\angle(\xi, \omega_0)) \lesssim \| m_2\|_{\mathcal{S}^\infty_{k;n,l }}.
\ee
 We use  $\| m_1\|_{\mathcal{S}^\infty_{k,j,l}}$ and $\| m_2\|_{\mathcal{S}^\infty_{k }}$, which are not norms,  simply for notational convenience.

In   applications, $\omega_0$ will be the direction (module the sign) of velocity characteristics $\tilde{V}(s)$,   the symbol  $m_1 ( \xi, v ) $ will be the price of using the normal form transformation or one (if not using the normal form transformation), and the symbol $m_2 ( \xi, \omega_0 ) $ will be the price of  doing integration over time along the spatial characteristic $X(s)$ or one. More precisely, recall (\ref{march4eqn76}), after doing dyadic decomposition for the angle between $\xi$ and $-V(s)$, we have
\[
High_{k;j,l}^m(s,\tau,x)= \sum_{n\in [l,2]
\cap \Z} H_{k;j,l}^{m;n}(s,\tau,x),\quad  H_{k;j,l}^{m;n}(s,\tau,x):= \int_{\R^3} \int_{\R^3} \int_{\R^3} e^{i x\cdot \xi + i (s-\tau) |\xi|   } \tilde{m}_{k,j,l;m} ( \xi, v )
\]
\be\label{march12eqn10}
 \times \varphi_{n;l}(\angle(\xi, -\tilde{V}(s)))    \big(\widehat{E}(\tau, \xi-\eta) +\hat{v}\times \widehat{B}(\tau, \xi-\eta) \big) \hat{f}(\tau, \eta, v) d\eta d v d \xi. 
\ee
 
Let   $C:\R_x^3 \times R_v^3\longrightarrow \R_x^{3 }$  be a differentiable function.  To control the increment of magnitude of characteristics between any two time $t_1, t_2\in [T^{\ast},T^{\ast\ast} ]\subset[0,t]$,  we estimate the following general  form, which is applicable for   quantities in (\ref{jan12eqn21}) and (\ref{march1eqn2100}).  Moreover, recall (\ref{march12eqn10}),  the following equality holds after doing integration by parts with respect to ``$s$'',
\be\label{feb28eqn68}
\int_{t_1}^{t_2} \int_0^s C( X(s ), V(s ))  \cdot  H_{k;j,l}^{m;n}(s ,\tau,X(s)) d \tau  d s    = \sum_{i=1,2,3}G_{ m}^{n;i}(C)(t_1,t_2),   
 \ee  
where $m\in \{0,e,b \}$ and the detailed formulas of $G_{ m}^{n;i}(C)(t_1,t_2),  i\in\{1,2,3\},$ are given as follow, 
\be\label{feb28eqn70}
G_{ m}^{n;1}(C) (t_1,t_2)=-  \int_{t_1}^{t_2}  \int_{\R^3}  \int_{\R^3}  \int_{\R^3}   e^{i X(s  ) \cdot \xi  }     \mathcal{M}_{ m}(C)(s, \xi, v ) \cdot \big[\big(\widehat{E}(s ,\xi-\eta) + \hat{v}\times \widehat{B}(s, \xi-\eta) \big) \widehat{f}(s,\eta, v) \big]  d v d \xi d\eta  d s,
 \ee
\[ 
G_{ m}^{n;2}(C) (t_1,t_2)= \sum_{i=1,2}(-1)^i \int_0^{t_i}  \int_{\R^3}  \int_{\R^3} \int_{\R^3}    e^{i X(t_i ) \cdot \xi + i  t_i |\xi| -i \tau |\xi| }   \mathcal{M}_{ m}^n(C)(t_i,\xi, v )\cdot \big[\big(\widehat{E}(\tau, \xi-\eta)
\]
\be\label{feb28eqn69}
   + \hat{v}\times \widehat{B}(\tau, \xi-\eta) \big)  \widehat{f}(\tau,\eta, v) \big]
  d v d \tau d \xi d \eta,
 \ee
 \[
G_{ m}^{n;3} (C)(t_1,t_2)= -  \int_{t_1}^{t_2} \int_0^s  \int_{\R^3}  \int_{\R^3} \int_{\R^3} \big( e^{i X(s ) \cdot \xi + i s |\xi| -i \tau |\xi| }\big)   \p_s     \mathcal{M}_{ m}^n(C)(s, \xi, v )\cdot   \big[\big(\widehat{E}(\tau, \xi-\eta) \]
\be\label{feb28eqn71}
+ \hat{v}\times \widehat{B}(\tau, \xi-\eta) \big) \widehat{f}(\tau,\eta, v) \big]    d v d \tau  d s   d \xi,
\ee
where  
\be\label{feb28eqn72}
  \mathcal{M}_{ m}^n(C)(s, \xi, v ):=   \big( \  \tilde{m}_{k,j,l;m} ( \xi, v )\big)^{\top}	C( X(s ), V(s ))  \big({ (|\xi|+\hat{V}(s  )\cdot \xi  ) } \big)^{-1}  \varphi_{n;l}(\angle(\xi,- \tilde{V}(s ))) . 
\ee

Recall that $j\in [(1-\alpha)M_t, (1+\epsilon)M_t]$. For the sake of readers,  based  on the possible size of $k$ (corresponds to the size of output frequency $\xi$),    $l$ (corresponds to the size of angle between $\xi$ and $-v$) and  $n$(corresponds to the size of angle between $\xi$ and velocity characteristic $-V(s)$), we summarize our  main strategies as follows.
\begin{enumerate}
\item[$\bullet$] The case $k\leq 40M_t$ and $k\leq  -l +j + (1- 3\alpha/2)M_t$. \quad For this case, we work with the formulation (\ref{jan14eqn51}) directly. We don't use the normal form transformation and don't use the integration by parts in characteristic time. 
\item[$\bullet$] The case $k\leq 40 M_t$ and $k\geq -l +j + (1-3\alpha/2)M_t$.\quad For this case, we  always use the normal form transformation and work with the formulation  (\ref{feb28eqn1}). Moreover, we don't use the integration by parts in characteristic time if $l$ is large or $n$ is small and do use it if   $l$ is small and $n$ is large. 

\item[$\bullet$] The    case $k\geq 40 M_t$. \quad For the very large frequency case, we use both the normal form transformation and the integration by parts in characteristic time. 
\end{enumerate}
 \begin{lemma}\label{nonormalform}
For any fixed $k\in \mathbb{Z}, j\in\mathbb{Z}_{+}$, $l\in [-j, 2]\cap \mathbb{Z}$, s.t., $k\leq 40M_t, j\in [(1-\alpha)M_t, (1+\epsilon)M_t]$, and $k\leq   -l +j + (1-3\alpha/2)M_t$,   the following estimate holds for any $s\in [0, t]$, 
\be\label{march2eqn40}
\sum_{K\in \{E, B\}}\big|K_{k;j,l}(s,x) \big|  \lesssim \min\{ \frac{2^{  (1-\alpha)M}  }{r_{-}},  2^{3k+3l-j }+ 2^{2k+2l  } \} , \quad r:=|x|.
\ee
 
\end{lemma}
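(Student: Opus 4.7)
The idea is to work directly with the frequency-space representation \eqref{jan14eqn51}--\eqref{jan14eqn53} without any integration-by-parts in $\tau$: the hypothesis $k\leq -l+j+(1-3\alpha/2)M_t$ pins us to the regime where crude volume-of-support bounds are already sharp enough. I would prove the two terms in the minimum by separate arguments.

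For the second bound $2^{3k+3l-j}+2^{2k+2l}$, the starting point is a pointwise estimate on the symbols $m_e, m_b$ defined in \eqref{feb28eqn12}: writing $\xi = -|\xi|\cos\theta\,\tilde v + |\xi|\sin\theta\,e_\perp$ with $\theta = \angle(\xi,-v)\sim 2^l$, a direct computation gives $(\hat v\cdot\xi)\hat v-\xi = |\xi|\cos\theta(1+|v|^2)^{-1}\tilde v - |\xi|\sin\theta\,e_\perp$ and $\hat v\times\xi = |\hat v||\xi|\sin\theta\,(\tilde v\times e_\perp)$, whence $|m_u(\xi,v)|\lesssim 2^l$ for $u\in\{e,b\}$ in the relevant range (since $2^{-2j}\lesssim 2^l$ when $l\geq -j$). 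Combining this with $|\hat f(\tau,\xi,v)|\leq\|f(\tau,\cdot,v)\|_{L^1_x}$, the radial symmetry of $v\mapsto\|f(\tau,\cdot,v)\|_{L^1_x}$ (a consequence of \eqref{radialsym}) reduces the $v$-integral over the cone $\{|v|\sim 2^j,\,\angle(\xi,-v)\sim 2^l\}$ to the solid-angle factor $2^{2l}$ times $\|f\varphi_j\|_{L^1_{x,v}}\lesssim 2^{-j}$ from \eqref{feb25eqn12}; together with the $\xi$-volume $2^{3k}$ this yields the pointwise-in-$\tau$ bound $|V_{k;j,l}^u(s,\tau,x)|\lesssim 2^{3k+3l-j}$. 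The complementary $2^{2k+2l}$ piece is obtained by interpolating against a stronger $L^p_xL^1_v$ bound on $f\varphi_j$ from \eqref{feb25eqn12}, combined with a Bernstein-type control of $V_{k;j,l}^u$ at the frequency scale $2^k$ with angular width $2^l$. To absorb the integration $\int_0^s d\tau$ without a polynomial-in-$s$ loss, I would convert the wave Duhamel integral into physical space via Kirchhoff's formula (Lemma \ref{Kirchoff}): the wave kernel concentrates on the light cone $|y-x|=s-\tau$, so the joint $(\tau,\omega)$-integration reduces to integration of $f(\tau,y,v)$ against a bounded null cone, which is $s$-independent. The higher-order piece $\mathrm{High}_{k;j,l}^{0}$ in \eqref{feb28eqn11} appearing in $E_{k;j,l}$ is handled analogously, bounding the inner $E,B$ factor by the rough point-wise estimate \eqref{march3eqn51}.

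For the $r^{-1}$-type bound $2^{(1-\alpha)M_t}/r_-$, I would mirror the proof of Lemma \ref{roughestimateofelectromag}, refining the Glassey--Strauss decomposition \eqref{gsdecomposition} with the frequency-angle cutoff $\varphi_{k;j,l}$. The refined analogues of \eqref{feb29eqn2}--\eqref{feb29eqn3} pick up an extra solid-angle factor $2^{2l}$ from the radial-symmetry estimate \eqref{feb29eqn1} of Lemma \ref{basicestimateint}, and the dyadic constraint $k+l\leq j+(1-3\alpha/2)M_t$ together with $j\in[(1-\alpha)M_t,(1+\epsilon)M_t]$ is precisely what lets the summations analogous to \eqref{march4eqn2} and \eqref{march3eqn52} close with the loss factor $2^{(1-\alpha)M_t}$ instead of the $2^{(1+\epsilon)M_t}$ of \eqref{march3eqn51}. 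Taking the minimum of the two bounds and summing over $K\in\{E,B\}$ completes the lemma.

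The main obstacle is reconciling the two distinct notions of ``angular localization'' at play: the Fourier-side cutoff $\varphi_{l;-j}(\angle(\xi,-v))$ constrains $\xi$ to a $2^l$-cone about $-v$, whereas the radial-symmetry gain \eqref{feb29eqn1} is phrased in physical space for sphere averages about a fixed direction $\omega_0$. Ensuring that the $2^{2l}$ gain actually propagates through the Kirchhoff sphere average---rather than degrading under the interaction of these two localizations---is what forces the precise dyadic bookkeeping in the exponents $3k+3l-j$ and $2k+2l$, and is the step that will require the most care.
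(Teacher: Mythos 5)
Your overall plan (no normal form, no integration by parts in time; crude volume/symbol counting for the $r$-independent bound; Kirchhoff plus radial symmetry for the $r^{-1}$ bound) points in the right direction, and your symbol computation giving $|m_u(\xi,v)|\lesssim 2^l$ and the count $2^{3k+3l-j}$ for $V^u_{k;j,l}$ matches the paper. But there are two genuine gaps. First, the term $2^{2k+2l}$ is not an interpolation/Bernstein refinement of the bound for $V^u_{k;j,l}$: it is the bound for the separate quadratic piece $High^0_{k;j,l}$ in (\ref{feb28eqn11}) (the term produced by substituting $\p_t f$ via the Vlasov equation), and your proposal to control its inner field factor by the pointwise estimate (\ref{march3eqn51}) cannot work. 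That estimate costs $2^{(1+\epsilon)M_t}/r_{-}$, so it both reintroduces an $r$-dependence that the second entry of the minimum must not have and, even after optimizing the $x$-integration near the origin with (\ref{feb25eqn12}), loses at least a factor $2^{M_t/3}$ over $2^{2k+2l}$. The needed ingredient is instead Cauchy--Schwarz against the conserved electromagnetic energy: $\||K|f\varphi_j\|_{L^1_{x,v}}\leq \|K\|_{L^2_x}\|f\varphi_j\|_{L^2_xL^1_v}\lesssim 2^{j}$ by (\ref{feb25eqn12}) with $p=2$, which combined with the symbol/volume factor $2^{2k+2l-j}$ gives exactly $2^{2k+2l}$. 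Relatedly, your justification for absorbing $\int_0^s d\tau$ (``the null cone integral of $f$ is $s$-independent'') is not a valid argument: a spacetime cone integral of $f$ is not controlled by naive volume counting, and taming the time integral is precisely where the light-cone splitting enters.

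Second, and more importantly, the $2^{(1-\alpha)M_t}/r_{-}$ bound is the heart of the lemma, and your plan for it -- refining the Glassey--Strauss decomposition (\ref{gsdecomposition}) with the Fourier angular cutoff -- is left at the level of an intention; you yourself flag the reconciliation of the Fourier-side cutoff $\varphi_{l;-j}(\angle(\xi,-v))$ with the physical-space sphere averages as the unresolved difficulty. The paper does not touch Glassey--Strauss here at all. Instead it inverts the frequency-angular localization into an anisotropic physical kernel with decay $2^{3k+2l}(1+2^k|y\cdot\tilde v|)^{-10}(1+2^{k+l}|y\times\tilde v|)^{-10}$ (integration by parts along $\tilde v$ and $\tilde v^{\perp}$, estimate (\ref{march11eqn91})), writes $V^u_{k;j,l}$ and $High^0_{k;j,l}$ via Kirchhoff, passes to cylindrical coordinates adapted to $\tilde v$, and applies the radial-average estimate (\ref{feb29eqn1}) to produce bounds of the shape $2^{k+l-j}/(r|r-(s-\tau)|)$ as in (\ref{march16eqn61}); the $\tau$-integral is then split near and away from the light cone with width $\delta=2^{-10M_t}$, and the hypothesis $k+l\leq j+(1-3\alpha/2)M_t$ is what turns the logarithmic light-cone loss into $2^{(1-\alpha)M_t}/r_{-}$ as in (\ref{march11eqn139}). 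Without supplying this kernel-level mechanism (or a worked-out substitute), your proposal does not yet prove the first entry of the minimum.
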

\begin{proof}
Recall the decomposition (\ref{jan14eqn51}) and the detailed formulas of $V_{k;j,l}^u(t,s,x), u\in\{e,b\}$, and $High_{k;j,l}^{0}(t,s,x)$ in  (\ref{jan14eqn53}--\ref{feb28eqn11}). From the volume of support of $\xi$ and the estimate (\ref{feb25eqn12}), we have
\[
\sum_{u\in \{e,b\}}| V_{k;j,l}^u(s,\tau,x)| \lesssim 2^{3k+3l} \|\widehat{f}(\tau, \xi, v)\varphi_j(v)\|_{L^1_v L^\infty_\xi} \lesssim 2^{3k+3l-j},  
\]
\be 
 |High_{k;j,l}^{0}(s,\tau,x)| \lesssim  \sum_{K\in \{E, B\} } 2^{2k+2l-j} \| |K(\tau,x) f(\tau,x,v) \varphi_j(v)\|_{L^1_{x,v}}\lesssim  2^{2k+2l}. 
\ee
From the above two estimates, we know that the second part of our desired estimate (\ref{march2eqn40}) holds.  Moreover, it allows us to focus on the case  $k\geq (1-\alpha)M_t/2$ for obtaining   first part of the desired estimate in (\ref{march2eqn40}).

From the Kirchhoff's formulas (\ref{dec20eqn1}) and (\ref{march4eqn100}) in Lemma \ref{Kirchoff}, we have
\be\label{feb27eqn14}
V_{k;j,l}^u(s,\tau,x)   =  I_u^1(s,\tau,x)  + I_u^2(s,\tau,x) ,\quad I_u^1(s,\tau,x):=  \int_{\mathbb{S}^2} \int_{\R^3} \int_{\R^3} (s-\tau) K_{k,l}^{u}(y,\theta, v) f(\tau,x-y-(s-\tau)\theta,v) \psi_j(v) d y  d v d \theta 
\ee
\be
 I_u^2(s,\tau,x):=  \int_{\mathbb{S}^2} \int_{\R^3} \int_{\R^3} E_{k,l}^{u}(y , v) f(\tau,x-y-(s-\tau)\theta,v) \psi_j(v) d y  d v d \theta ,\quad u\in \{e,b\},
\ee
\[
 High_{k;j,l}^{0}(s,\tau,x)  =  J_u^1(s,\tau,x)  + J_u^2(s,\tau,x) , \quad  J_u^1(t,s,x):=  \int_{\mathbb{S}^2} \int_{\R^3} \int_{\R^3} (s-\tau) K_{k,l}^{high;0}(y,\theta, v)  \big(E(\tau,x-y-(s-\tau)\theta )
\]
\be\label{marcheqn3}
  + \hat{v}\times B(\tau,x-y-(s-\tau)\theta) \big) \cdot \nabla_v \hat{v} f(\tau,x-y-(s-\tau)\theta,v)  \psi_j(v) d y  d v d \theta  ,
\ee
\[
 J_u^2(s,\tau,x):=  \int_{\mathbb{S}^2} \int_{\R^3} \int_{\R^3}   E_{k,l}^{high;0}(y,v)  \big(E(\tau,x-y-(s-\tau)\theta )  + \hat{v}\times B(\tau,x-y-(s-\tau) \theta) \big) \cdot \nabla_v \hat{v}
\]
\be\label{march11eqn110}
 \times f(\tau,x-y- (s-\tau)\theta,v)  \psi_j(v) d y  d v d \theta  ,
\ee
where
\be 
  K_{k,l}^u(y,\theta, v):=\int_{\R^3} e^{i y\cdot \xi} i  m_u(\xi, v) (|\xi|+\theta\cdot\xi) \varphi_{l;-j}(\angle(\xi, -v))\psi_k(\xi) d \xi,\quad u\in\{e,b\},
  \ee
   \be
   K_{k,l}^{high;0}(y,\theta, v):=\int_{\R^3} e^{i y\cdot \xi} (1+\theta\cdot \xi/|\xi|) \varphi_{l;-j}(\angle(\xi, -v))\psi_k(\xi) d \xi.
\ee
  \be 
  E_{k,l}^u(y,v):=\int_{\R^3} e^{i y\cdot \xi}    m_u(\xi, v)   \varphi_{l;-j}(\angle(\xi, -v))\psi_k(\xi) d \xi,\quad u\in\{e,b\}. 
  \ee
  \be
    E_{k,l}^{high;0}(y,v)= \int_{\R^3} e^{i y\cdot \xi}|\xi|^{-1} \varphi_{l;-j}(\angle(\xi, -v))\psi_k(\xi) d \xi
  \ee
  After doing integration by parts in $\tilde{v}$ direction and  $\tilde{v}^{\bot}$ directions, the following estimate holds for any $u\in\{e,b\},$
  \[
     \sup_{\theta\in \mathbb{S}^2} 2^{-k-l}| K_{k,l}^u(y,\theta, v) | +  |K_{k,l}^{high;0}(y,\theta, v)|+   2^{-l}|E_{k,l}^u(y,v) |   + 2^k| E_{k,l}^{high;0}(y,v)|
  \]
\be\label{march11eqn91}
  \lesssim 2^{3k+ 2l} (1+2^k|y\cdot \tilde{v}|)^{-10}(1+2^{k+l}|y\times \tilde{v}|)^{-10}. 
\ee

 Note that, for $y\in \R^3$ in (\ref{feb27eqn14}--\ref{march11eqn110}), we can  represent it in terms of the cylinder coordinate system  as follows, 
 \be\label{marcheqn16eqn69}
y= \big(\tilde{v}\cdot y\big) \tilde{v} + r_y \omega_y,  \quad r_y\in \R_{+}, \omega_y\in S_{v}, \quad S_v:=\{ \omega\in \S^2: \omega\cdot v =0\}.
 \ee
Therefore, from the estimate of kernel in (\ref{march11eqn91}), the estimate (\ref{feb29eqn1}) in Lemma \ref{basicestimateint}, and the estimate (\ref{feb25eqn12}),  we have
\[
\big| I_u^1(s,\tau,x)  \big|  \lesssim     \int_{\mathbb{S}^2}   \int_{\R^3}  \int_{ S_v} \int_{\R} \int_{\R}  (s-\tau)   2^{4k+3l} (1+a2^k)^{-10}(1+r 2^{k+l})^{-10} f(\tau,x-a\tilde{v}-r\omega_v-(s-\tau)\theta,v) \psi_j(v)  rdr d a d \omega_{v}   d v d \theta 
\]
\be\label{march16eqn61}
\lesssim \frac{2^{k+l } \| \int_{\R^3} f(\tau, x,v)\varphi_j(v) d v  \|_{L^1_x} }{r|r-(t-s)|} \lesssim    \frac{2^{k+l-j}}{r|r-(t-s)|}.
\ee
Following the same strategy, we have 
\be\label{march2eqn34}
  \big| I_u^2(s,\tau,x)  \big|    
\lesssim  \frac{2^{ l-j}}{r(t-s)|r-(t-s)|}. 
\ee
\be
|  J_u^1(s,\tau,x) | \lesssim   \sum_{K\in\{E,B\}} \frac{2^{-j}}{r|r-(t-s)|}    \|\int_{\R^3} |K(\tau,x)|f(\tau,x,v) \varphi_j(v)  d v      \|_{L^1_x}\lesssim \frac{1}{r|r-(t-s)|}  
\ee
\be\label{march11eqn131}
|  J_u^2(s,\tau,x) | \lesssim   \sum_{K\in\{E,B\}} \frac{2^{-k-j}}{r(t-s)|r-(t-s)|}    \|\int_{\R^3} |K(\tau,x)|f(\tau,x,v) \varphi_j(v)  d v      \|_{L^1_x}\lesssim \frac{2^{-k }}{r(t-s)|r-(t-s)|}. 
\ee
Moreover, from the estimate of kernels in (\ref{march11eqn91}), Cauchy-Schwarz inequality  and the volume of support of $v$, we have
\be\label{march11eqn93}
\big| I_u^1(s,\tau,x)  \big|    
\lesssim 2^{k+l+3j+\epsilon M_t},\quad \big| I_u^2(s,\tau,x)  \big|     
\lesssim 2^{l+3j}, \quad \big| J_u^1(s,\tau,x)  \big|  \lesssim 2^{3k/2+l+2j+\epsilon M_t}, \big| J_u^2 (s,\tau,x)  \big|  \lesssim 2^{ k/2+l+2j}. 
\ee
Let $\delta= 2^{-10M_t}$.  From the above estimate (\ref{march11eqn93}) and the estimates (\ref{march16eqn61}--\ref{march11eqn131}), the following estimate holds for any $u\in\{e,b\}$,
\[
 \int_0^s \big( \big| I_u^1(s,\tau ,x)  \big|   +\big| J_u^1(s,\tau,x)  \big|\big) d \tau \]
 \[
  \lesssim \int_{[0,s]\cap [s-r-\delta,  s-r+\delta] }2^{\epsilon M_t}( 2^{k+l+3j} +2^{3k/2+l+2j}) d\tau +\int_{[0,s]\cap [s-r-\delta,  s-r+\delta]^c }      \frac{2^{k+l-j} +1}{r|r-(s-\tau)|} d \tau
\]
\be\label{march11eqn139}
\lesssim  \delta2^{\epsilon M_t}  ( 2^{k+l+3j} +2^{3k/2+l+2j})       + \ln(1/\delta)\frac{1}{r} (2^{k+l-j} +1)\lesssim \frac{2^{ (1-\alpha) M_t} }{r_{-}}, 
\ee
\[
 \int_0^s\big( \big| I_u^2(s,\tau ,x)  \big| +  \big| J_u^2(s,\tau ,x)  \big|\big)  d \tau \]
 \[
  \lesssim    \int_{ [0,s- {\delta}]\cap [s-r-\delta,  s-r+\delta]^c }    \frac{2^{ -j} +2^{-k}}{r(s-\tau)|r-(s-\tau)|} d\tau+    \int_{([0,s- {\delta}]\cap [s-r-\delta,  s -r+\delta])\cup[s- {\delta}, t] }   2^{3j}  d\tau \]
\be\label{march11eqn100}
\lesssim   {\delta} 2^{3 j} + \frac{(2^{-j}+2^{-k})\ln(1/ {\delta})}{  r^2}\lesssim \frac{2^{ -(1-\alpha-\epsilon)M_t/2 }}{r^2_{-}}.
\ee
Note that  the above estimate (\ref{march11eqn100}) is sufficient for the estimate of $I_u^2(s,\tau ,x)$ and $J_u^2(s,\tau ,x)$ if $r\geq 2^{ -3(1-\alpha-\epsilon)M_t/2 }$.  If $r\leq 2^{ -3(1-\alpha-\epsilon)M_t/2 }$, then from the estimates (\ref{march2eqn34}), (\ref{march11eqn131}), and (\ref{march11eqn93}),   the following estimate holds,
\[
\int_0^s \big( \big| I_u^2(s,\tau ,x)  \big| +  \big| J_u^2(s,\tau ,x)  \big|\big)  d \tau  \lesssim \int_{ s- 2^{ - M_t  }}^{s} (2^{l+3j} + 2^{k/2+l+2j}) d\tau + \int_0^{\max\{s- 2^{ - M_t  },0\} }       \frac{2^{ -j} +2^{-k}}{r(s-\tau)|r-(s-\tau)|}  d\tau
\]
\be\label{march19eqn7}
\lesssim 2^{2(1+2\epsilon)M_t} + \frac{2^{M_t}(2^{ -j} +2^{-k})}{r} \lesssim \frac{2^{(1-\alpha)M_t}}{r}
\ee

Recall the decomposition (\ref{jan14eqn51}) and  the decompositions in (\ref{feb27eqn14}) and  (\ref{marcheqn3}). To sum up, our desired estimate (\ref{march2eqn40}) holds from the estimates (\ref{march11eqn139}), (\ref{march11eqn100}), and (\ref{march19eqn7}). 

\end{proof}

Now, we proceeds to consider the case when $k\geq -l +j + (1-3\alpha/2)M_t $. As we explained at the beginning, we will use normal form transformation for this case. Recall (\ref{feb28eqn1}). The estimate of endpoint case is summarized in the following Lemma. 
\begin{lemma}\label{endpointcase}
For any fixed $k\in \mathbb{Z}, j\in\mathbb{Z}_{+}$, $l\in [-j, 2]\cap \mathbb{Z}$, s.t.,  $j\in[(1-\alpha)M_t, (1+\epsilon)M_t],$ $k\geq -l +j + (1-3\alpha/2)M_t $,     the following estimate holds for any $s\in[0,t]$,
\be\label{march2eqn53}
|End_{k;j,l}^e(s,s,x)| + |End_{k;j,l}^b(s,s,x)| \lesssim  \min\{ \frac{2^{(1-\alpha)M_t}}{r_{-}} , 2^{-k-l + 3j }\}. 
\ee
\end{lemma}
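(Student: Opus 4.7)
The plan is to establish each of the two bounds inside the minimum separately and then take the intersection.

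For the first bound $|End_{k;j,l}^{u}(s,s,x)|\lesssim 2^{-k-l+3j}$, I would work in physical space via the convolution representation obtained by Fourier inversion of \textup{(\ref{jan17eqn5})}. Writing
\[
End_{k;j,l}^{u}(s,s,x) = \int_{\R^3}\int_{\R^3}\check K^u(x-y,v)\,f(s,y,v)\,dy\,dv,\qquad K^u(\xi,v):=\frac{m_u(\xi,v)\varphi_{k;j,l}(\xi,v)}{|\xi|(|\xi|+\hat v\cdot\xi)},
\]
one checks that on the support of the cutoffs the denominator satisfies $|\xi|(|\xi|+\hat v\cdot\xi)\gtrsim 2^{2k+2l}$ (the cap case $l=-j$ being covered by the correction $1-|\hat v|\sim 2^{-2j}$), so $|K^u|\lesssim 2^{-2k-2l}$. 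Integration by parts in $\xi$ along $\tilde v$ and $\tilde v^{\perp}$ at scales $2^{-k}$ and $2^{-k-l}$ yields the standard tube decay
\[
|\check K^u(y,v)|\lesssim 2^k(1+2^k|y\cdot\tilde v|)^{-N}(1+2^{k+l}|y\times\tilde v|)^{-N},
\]
hence $\|\check K^u(\cdot,v)\|_{L^1_y}\lesssim 2^{-2k-2l}$. Pairing this with $\|f\|_{L^{\infty}_{x,v}}\lesssim 1$ and $|\mathrm{supp}(\varphi_j)|\sim 2^{3j}$ produces $|End_{k;j,l}^{u}|\lesssim 2^{-2k-2l+3j}$, which is $\leq 2^{-k-l+3j}$ because $k+l\geq j+(1-3\alpha/2)M_t\geq 0$ in the hypothesized regime.

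For the second bound $|End_{k;j,l}^{u}(s,s,x)|\lesssim 2^{(1-\alpha)M_t}/r_-$, the idea is to extract a factor $1/r$ from the spherical symmetry of $f$. I would change to cylindrical coordinates $y-x=a\tilde v+b\omega_v$ with $a\in\R$, $b\geq 0$, $\omega_v\in S_v$, and use that $|\check K^u(-a\tilde v-b\omega_v,v)|$ is a function of $(a,b,|v|)$ only (by covariance of $K^u$ under rotations fixing $\tilde v$), concentrated on $|a|\lesssim 2^{-k}$ and $b\lesssim 2^{-k-l}$. Following the same argument as in Lemma \ref{basicestimateint} (cf.\ the reduction $\rho(Rx;a,b)=\rho(x;a,b)$ used to prove \textup{(\ref{feb29eqn1})}), the quantity
\[
\Psi(x;a,b):=\int_{\R^3}\varphi_j(v)\int_{S_v}|f(s,x+a\tilde v+b\omega_v,v)|\,d\omega_v\,dv
\]
is radial in $x$. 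I would then apply the estimate \textup{(\ref{march6eqn1})} in spirit (adapted to circle averages on $S_v$ rather than full spherical averages on $\S^2$) to gain a factor $1/r$ when $r\gg 2^{-k-l}$, and combine with the moment bound \textup{(\ref{feb25eqn12})} to replace $\|f\varphi_j\|_{L^1_{x,v}}$ by $\min\{2^{-j},2^{-N_1 j+N_1 M_t}\}$. The constraints $j\in[(1-\alpha)M_t,(1+\epsilon)M_t]$ and $k\leq 40M_t$ then balance into the asserted factor $2^{(1-\alpha)M_t}$. For $r\lesssim 2^{-k}$, the radial gain disappears, but in that regime the first bound already dominates $2^{(1-\alpha)M_t}/r_-$, so nothing more is needed.

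The hard part is the radial analysis in the second bound. Unlike the situation of \textup{(\ref{feb29eqn1})}, no extra spherical integration from a Kirchhoff formula is available, so one must redo the Jacobian computation of \textup{(\ref{march7eqn1})} for circle averages on $S_v$, and carefully track how the singularity as $|x_\perp|\to 0$ interacts with the tube concentration in $(a,b)$. The endpoint $l=-j$ is also subtle, since the angular cap degenerates and the $1-|\hat v|$ correction is then comparable to the cap width; it requires one to keep both terms in the denominator $|\xi|+\hat v\cdot\xi$ on equal footing throughout the kernel estimate.
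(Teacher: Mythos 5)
Your first bound is proved the same way the paper proves it (Fourier inversion, tube-type kernel decay from integration by parts along $\tilde v$ and $\tilde v^{\perp}$, then $\|f\|_{L^\infty_{x,v}}\lesssim 1$ and the $v$-support volume $2^{3j}$), and your constants are in fact admissible: $|m_u|\lesssim 1$, the lower bound $|\xi|(|\xi|+\hat v\cdot\xi)\gtrsim 2^{2k+2l}$ holds uniformly on the support (including $l=-j$ via $1-|\hat v|\sim 2^{-2j}$), and the symbol derivatives do cost $2^{-k}$ along $\tilde v$ and $2^{-k-l}$ transversally, so $\|\check K^u(\cdot,v)\|_{L^1_y}\lesssim 2^{-2k-2l}$ is correct (the paper's (\ref{kernelnormal}) carries the lossier prefactor $2^{2k+l}$ and hence only (\ref{march2eqn49})). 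You should notice, however, that this already finishes the lemma: since $k+l\geq j+(1-3\alpha/2)M_t$ and $j\leq(1+\epsilon)M_t$, your bound gives $2^{-2k-2l+3j}\leq 2^{j-(2-3\alpha)M_t}\leq 2^{-(1-3\alpha-\epsilon)M_t}\leq 2^{(1-\alpha)M_t}\leq 2^{(1-\alpha)M_t}/r_{-}$ because $r_{-}\leq 1$, so in this frequency regime the whole "hard part" you set up for the second bound is unnecessary.

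As written, though, that second half is a genuine gap, not a proof: the circle-average analogue of (\ref{march6eqn1}) that you invoke "in spirit" is precisely the missing step, and it cannot hold pointwise in $(a,b)$ — a circle $\{x+a\tilde v+b\omega_v\}$ can lie entirely on a single sphere $\{|z|=\rho\}$, where a radial $L^1_x$ density has no pointwise control — so one is forced to integrate over the tube variables before using radiality, exactly the difficulty you flag but do not resolve. The paper avoids all of this (no circle-average Jacobian, no separate treatment of $l=-j$): after discarding $|y|\geq 2^{-k-l+\epsilon M_t}$ by the rapid kernel decay as in (\ref{march2eqn47}), it simply bounds the kernel by its sup on the remaining ball, observes that for $r\geq 2^{-4\alpha M_t}$ this ball sits inside a thin annulus around $|z|=r$ and occupies solid angle $\sim (2^{-k-l+\epsilon M_t}/r)^2$, and uses radiality of the $v$-averaged density $\int f\varphi_j\,dv$ together with $\|f\varphi_j\|_{L^1_{x,v}}\lesssim 2^{-j}$ from (\ref{feb25eqn12}); this is (\ref{march2eqn48}), and the small-$r$ range is handled by the crude bound as in (\ref{march8eqn21}). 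So: either keep your sharper kernel constant and delete the second half, or, if you work with the paper's cruder constant, replace your circle-average plan by this solid-angle argument; the plan as proposed does not close.
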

\begin{proof}
Recall the detailed formula of $ End_{k;j,l}^u(s,s,x), u\in\{e,b\}$,  in (\ref{jan17eqn5}). In terms of kernel, we have
\[
End_{k;j,l}^u(s,s,x)= \int_{\R^3} \int_{\R^3} \widetilde{K_{k,l}^u}(y, v) f(s,x-y, v) \varphi_j(v) d y d  v, \quad u\in \{e,b\},
\] 
where
\[
 \widetilde{K_{k,l}^u}(y, v)= \int_{\R^3} e^{i y \cdot \xi}  \frac{ m_u(\xi, v)\psi_k(\xi) }{|\xi|(|\xi|+\hat{v}\cdot \xi )}  \varphi_{l;-j}(\angle(v, -\xi)) d \xi   .
\]
Recall the detailed formulas of symbol $m_u(\xi, v)$ in (\ref{feb28eqn12}). After doing integration by parts in $\tilde{v}$ direction and  $\tilde{v}^{\bot}$ directions, the following estimate holds for the kernels,
\be\label{kernelnormal}
 |  \widetilde{K_{k,l}^u}(y, v)|  \lesssim 2^{2k+l} (1+2^k|y\cdot \tilde{v}|)^{-100/\epsilon}(1+2^{k+l}|y\times \tilde{v}|)^{-100/\epsilon}. 
\ee
From the above estimate of kernels and the volume of support of $v$, we have
\be\label{march2eqn49}
|End_{k;j,l}^u(s,s,x) \lesssim 2^{-k-l + 3j }\lesssim 2^{(1+3\alpha)M_t}
\ee
In particular, from the above estimate, we have the following estimate if $|x|\leq 2^{-4\alpha M_t}$, 
\be\label{march8eqn21}	
|End_{k;j,l}^e(s, s,x)| + |End_{k;j,l}^b(s,s,x)| \lesssim  {r}^{-1}  2^{(1-\alpha)M_t}.
\ee
Now, we restrict ourself to the case $|x|\geq 2^{-4\alpha M_t}$.  Due to the fast decay rate  of kernels in the estimate (\ref{kernelnormal}),  the following estimate holds if $|y| \geq   2^{-k-l+\epsilon M_t}$, 
\be\label{march2eqn47}
\sum_{u\in\{e,b\}} \big|\int_{\R^3} \int_{|y|\geq 2^{-k-l+\epsilon M_t} } \widetilde{K_{k,l}^u}(y, v) f(s,x-y, v) \varphi_j(v) d y d  v \big| \lesssim 2^{-k-l-50 j} \lesssim r^{-1}. 
\ee
Note that $|x-y|\sim |x|$ if $|y|\leq 2^{-k-l+\epsilon M_t} $ and $|x|\geq 2^{-4\alpha M_t}$. From the radial symmetry of $f(t,\cdot,\cdot)$ and the estimate of kernel in (\ref{kernelnormal}), the following estimate holds if   $|x|\geq 2^{-4\alpha M_t}$,
\be\label{march2eqn48}
\sum_{u\in\{e,b\}} \big|\int_{\R^3} \int_{|y|\leq 2^{-k-l+\epsilon M_t} } \widetilde{K_{k,l}^u}(y, v) f(s,x-y, v) \varphi_j(v) d y d  v \big| \lesssim 2^{2k+l} \frac{2^{-2k-2l+2\epsilon  M_t }}{r^2} 2^{-j}\lesssim \frac{2^{2\epsilon M_t}}{r^2} \lesssim \frac{2^{5\alpha M_t}}{r}
\ee
 To sum up, our desired estimate (\ref{march2eqn53}) holds from the estimates (\ref{march2eqn49}), (\ref{march8eqn21}), (\ref{march2eqn47}), and (\ref{march2eqn48}).
\end{proof}

Recall the decompositions in  (\ref{jan14eqn51}) and    (\ref{feb28eqn1}).  Now our mission is reduced to estimate the quadratic terms, $High^m_{k;j,l}(t,s,x),$ $ m\in\{0,e,b\} $ in (\ref{march4eqn76}) for the case $k\geq -l +j + (1-3\alpha/2)M_t $.

Recall the decomposition of $High^m_{k;j,l}(t,s,x)$,$ m\in\{0,e,b\} $, in    (\ref{march12eqn10}). The key ingredients of estimating $H_{k;j,l}^{m;n}(t,s,x)$, are two point-wise bilinear estimates in Lemma \ref{bilinearestimaterough}, which are applicable regardless whether using the   integration by parts in characteristic time. We will   use them as   black boxes for the estimate of  $H_{k;j,l}^{m;n}(t,s,x)$, $ m\in\{0,e,b\} $, in the next two Lemmas. For the sake of clarity, we postpone and  and  elaborate the proof of Lemma  \ref{bilinearestimaterough} to the next subsection.

\begin{lemma}\label{largelandsmalln}
Under the assumptions \textup{(\ref{aproiriestimate})} and \textup{(\ref{march12eqn31})},  for any fixed $s  \in [T^{\ast}, T^{\ast\ast} ]\subset[0,t]$,  $k\in \mathbb{Z}, j\in\mathbb{Z}_{+}$, $l\in [-j, 2]\cap \mathbb{Z}$, s.t.,  $k\leq 40M_t, j\in [(1-\alpha)M_t,(1+\epsilon)M_t]$,  and $k\geq  -l +j + (1-3\alpha/2)M_t $,   if moreover we have $l\geq -2(1-7\alpha)M_t/3$ or $n\leq -M_t/2-7\alpha M_t$, then  the following estimate holds,
\be\label{march16eqn3}
 \big| \int_0^s C_1(X(s),V(s))\cdot \big( H_{k;j,l}^{0;n}(s,\tau,X(s )) +  H_{k;j,l}^{e;n}(s,\tau,X(s))\big)  d \tau \big| + \big| \int_0^s  C_2(X(s),V(s))\cdot  H_{k;j,l}^{b;n}(s,\tau ,X( s))  d \tau \big|\lesssim 2^{-\alpha M_t },
\ee
\be\label{march4eqn92}
  \big| \int_0^s \tilde{V}(s)\cdot \big( H_{k;j,l}^{0;n}(s,\tau  ,X(s)) +  H_{k;j,l}^{e;n}( s,\tau ,X(s))\big)  d  \tau \big|  \lesssim      \min\{ \frac{ 2^{(1-\alpha) M_t}}{|X(s)|_{-}}, 2^{k/2+3(1+3\epsilon)M_t }|X(s)|  + 2^{3(1+3\epsilon)M_t }\} . 
\ee
\end{lemma}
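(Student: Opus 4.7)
The strategy is to estimate $H_{k;j,l}^{m;n}(s,\tau,X(s))$ pointwise and integrate in $\tau\in[0,s]$, without performing the integration by parts in the characteristic time variable (i.e.\ without using the decomposition (\ref{feb28eqn68})). Each $H_{k;j,l}^{m;n}(s,\tau,x)$ fits exactly into the template of the bilinear form $T_{k;j}^{l,n}(K,f)(s,\tau,x)$ defined in (\ref{march10eqn1}), with $m_1=\tilde{m}_{k,j,l;m}$ from (\ref{feb28eqn74}), reference direction $\omega_0=-\tilde{V}(s)$ (a fixed vector for each fixed $s$), and $m_2\equiv 1$. The plan is to invoke the point-wise bilinear estimates of Lemma \ref{bilinearestimaterough} as a black box, after a routine verification of the symbol assumptions (\ref{symbolassumption1})--(\ref{symbolassumption2}); note that $|\xi|+\hat{v}\cdot\xi\sim |\xi|(2^{2l}+2^{-2j})$ on the support of $\varphi_{k;j,l}$ because $|\hat{v}|=1-O(2^{-2j})$ when $j\geq(1-\alpha)M_t$, and this quantifies the normal-form denominator loss in $\tilde{m}_{k,j,l;u}$ for $u\in\{e,b\}$.

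I then multiply the pointwise bilinear bound by $|C(X(s),V(s))|$ to form the integrand. For $C\in\{C_1,C_2\}$, the explicit formulas (\ref{march17eqn1}) combined with the bootstrap $|V(s)|\sim 2^{(1-\beta)M_t}$ on $[T^\ast,T^{\ast\ast}]$ yield $|C(X(s),V(s))|\lesssim|X(s)|\cdot 2^{-(1-\beta)M_t}$, which is the crucial extra decay that produces the $2^{-\alpha M_t}$ bound in (\ref{march16eqn3}); for $C=\tilde{V}$ no such extra gain is available, which explains the weaker min-type bound in (\ref{march4eqn92}). The two alternatives in that min correspond directly to the two point-wise bounds coming from Lemma \ref{bilinearestimaterough}: an $|X(s)|^{-1}$-decaying estimate (good away from the origin) and a linearly growing one (good near the origin). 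The two subcases of the hypothesis are handled in parallel. Under $l\geq -2(1-7\alpha)M_t/3$, the angular volume $2^{2l}$ from $\varphi_{l;-j}$ combined with the at-worst $2^{-2l}$ from the normal-form denominator keeps the total loss manageable in every regime. Under $n\leq -M_t/2-7\alpha M_t$, the extra cutoff $\varphi_{n;l}(\angle(\xi,-\tilde{V}(s)))$ further restricts $\xi$ to a tube of angular volume $\lesssim 2^{2n}\leq 2^{-M_t-14\alpha M_t}$, supplying a gain that dominates every other loss regardless of $l$.

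The $\tau$-integration produces singular contributions near $\tau=s$ and near the wave front $\tau=s-|X(s)|$; these are treated by dyadically decomposing $|s-\tau|$ and $||s-\tau|-|X(s)||$ down to a threshold $\delta\sim 2^{-cM_t}$ and summing the resulting geometric series, exactly as in the estimates (\ref{march3eqn53}), (\ref{march4eqn2}), (\ref{march3eqn52}) of Lemma \ref{roughestimateofelectromag}. The main obstacle will be the intricate bookkeeping across the many dyadic scales (frequency $k$, velocity $j$, the two angles $l$ and $n$, and the two wave-singularity scales): I must verify that in each of the two subregimes, and for each possible size of $|X(s)|$ (comparable to $1$ vs.\ much smaller), the symbol norms, angular volumes, and $\tau$-integration losses assemble into exactly the stated bounds. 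A secondary technical issue is the selection, in the $\tilde{V}$ case, between the two alternatives furnished by Lemma \ref{bilinearestimaterough} so as to reproduce the min structure in (\ref{march4eqn92}) uniformly in the admissible $k,l,n$.
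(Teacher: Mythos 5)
Your overall skeleton coincides with the paper's: for this lemma one does \emph{not} integrate by parts in the characteristic time, one views $H_{k;j,l}^{m;n}$ as the bilinear form (\ref{march10eqn1}) with $\omega_0=-\tilde{V}(s)$ and $m_2\equiv 1$, and one invokes Lemma \ref{bilinearestimaterough} as a black box. The gap is in how you treat the coefficients. You propose to ``multiply the pointwise bilinear bound by $|C(X(s),V(s))|$'' using only the magnitude bound $|C_1|,|C_2|\lesssim |X(s)|2^{-(1-\beta)M_t}$, and you assert that in the branch $n\leq -M_t/2-7\alpha M_t$ the angular cutoff supplies a volume gain $2^{2n}$ ``regardless of $l$.'' That gain is not delivered by the black box: the $r$-decaying alternative (\ref{march10eqn2}) carries the $n$-dependence only through the factor $2^{n}/(2^n+|\tilde{x}\times\omega_0|)$, which is merely $O(1)$ and gives \emph{no} gain when $X(s)$ is nearly parallel to $V(s)$, while the alternative (\ref{march14eqn100}) (which does contain $2^{2n}$) also carries $2^{3j}$ and is only useful for tiny $|X(s)|$. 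In the regime where the hypothesis holds only through the $n$-clause, $l$ can be as negative as $-j$, so the surviving factor $2^{-j-3l/2}\sim 2^{j/2}\sim 2^{M_t/2}$ makes your bound for (\ref{march16eqn3}) of size roughly $2^{(1/2+2\beta+3\alpha)M_t}$ rather than $2^{-\alpha M_t}$, and the first alternative of (\ref{march4eqn92}) comes out as $2^{3M_t/2+O(\alpha M_t)}/|X(s)|_{-}$ rather than $2^{(1-\alpha)M_t}/|X(s)|_{-}$. (Under the other clause $l\geq -2(1-7\alpha)M_t/3$ your crude route does close, so the failure is localized to the $n$-clause branch.)

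The missing ingredient is the structural gain encoded in the paper's refined symbol estimates (\ref{march15eqn1}) and (\ref{march14eqn51}), computed \emph{before} applying the bilinear lemma. Writing $C_1=\frac{X_{\perp V}}{\sqrt{1+|V|^2}}+\frac{X_{\parallel V}}{(1+|V|^2)^{3/2}}$ shows $|C_1|\lesssim |X(s)|\big(|\tilde{X}(s)\times\tilde{V}(s)|+|V(s)|^{-2}\big)2^{-(1-\beta)M_t}$, and $C_2\propto X\times\hat{V}$ carries the same transversal factor; together with the angular localization $\varphi_{n;l}(\angle(\xi,-\tilde{V}(s)))$ this yields $\|C_i\cdot\tilde{m}_{k,j,l;\cdot}\,\varphi_{n;l}\|_{\mathcal{S}^\infty_{k,j,l}}\lesssim |X(s)|\big(|\tilde{X}(s)\times\tilde{V}(s)|+2^n\big)2^{-(1-\beta)M_t-k-j-2l}$, and the factor $(|\tilde{X}(s)\times\tilde{V}(s)|+2^n)$ cancels exactly against the denominator $2^n+|\tilde{x}\times\omega_0|$ in (\ref{march10eqn2}), producing a genuine $2^{n}$ gain uniformly in the geometry of $X(s),V(s)$. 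Likewise, contrary to your remark that ``no such extra gain is available'' for $C=\tilde{V}$, the contraction $\tilde{V}(s)\cdot\tilde{m}_{k,j,l;e}$ under the $n$-cutoff gains a factor $2^{n}$ (estimate (\ref{march15eqn1})), and this is precisely what makes $-3l/2+n+4\alpha M_t\leq (1-\alpha)M_t$ close in the $n$-clause for (\ref{march4eqn92}). Without these refined contractions your argument cannot reach the stated bounds; with them, the rest of your bookkeeping (and the $\tau$-integration, which is already internal to Lemma \ref{bilinearestimaterough}) proceeds as in the paper.
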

\begin{proof}
Recall the detailed formulas of symbols $ \tilde{m}_{k,j,l;m} ( \xi, v ) $ in (\ref{feb28eqn74}) and smooth coefficients in (\ref{march17eqn1}).  Note that, the following decomposition holds for any $u$, 
\be\label{march14eqn45}
u\cdot \nabla_v = u\cdot \tilde{v}(\tilde{v}\cdot \nabla_v) + \sum_{i=1,2,3} u\cdot(\tilde{v} \times \mathbf{e}_i)\big( (\tilde{v} \times \mathbf{e}_i)\cdot \nabla_v  \big),
\ee 
where $\mathbf{e}_1:=(1,0,0), \mathbf{e}_2:=(0,1,0), \mathbf{e}_3:=(0,0,1).$ Recall (\ref{symbolassumption1}). With the above equality, as a result of direct computations, we have
\be\label{march14eqn80}
\|\tilde{m}_{k,j,l;0} ( \xi, v )\|_{\mathcal{S}^\infty_{k,j,l}}    \lesssim 2^{-k-j },\quad \|\tilde{m}_{k,j,l;e} ( \xi, v )\|_{\mathcal{S}^\infty_{k,j,l}}+  \|\tilde{m}_{k,j,l;b} ( \xi, v )\|_{\mathcal{S}^\infty_{k,j,l}}    \lesssim 2^{-k-j-2l }, 
\ee
\be\label{march15eqn1}
\| \tilde{V}(s)\cdot \tilde{m}_{k,j,l;e} ( \xi, v )  \varphi_{n;l}(\angle(\xi, -\tilde{V}(t)))  \|_{\mathcal{S}^\infty_{k,j,l}}\lesssim 2^{-k-j-2l +n},\ee
\[
   \|C_1(X(s),V(s))  \tilde{m}_{k,j,l;e} ( \xi, v )   \varphi_{n;l}(\angle(\xi, -\tilde{V}(t)))  \|_{\mathcal{S}^\infty_{k,j,l}}   + \|   C_2(X(s),V(s))  \tilde{m}_{k,j,l;b} ( \xi, v ) \varphi_{n;l}(\angle(\xi, -\tilde{V}(t)))  \|_{\mathcal{S}^\infty_{k,j,l}}
\]
\be\label{march14eqn51}
   \lesssim |X(s)| \big( |\tilde{X}(s)\times \tilde{V}(s)| + 2^{n}\big) 2^{-(1-\beta)M_t-k-j-2l }. 
\ee
Therefore, from the above estimates of symbols, the estimate (\ref{march17eqn11}),   and the estimates (\ref{march10eqn2}) and (\ref{march14eqn100}) in Lemma \ref{bilinearestimaterough}, we have
\[
 \big| \int_0^s C_1(X(s),V(s))\cdot \big( H_{k;j,l}^{0;n}(s,\tau,X(s )) +  H_{k;j,l}^{e;n}(s,\tau,X(s))\big)  d \tau \big| + \big| \int_0^s  C_2(X(s),V(s))\cdot  H_{k;j,l}^{b;n}(s,\tau ,X(ts))  d \tau \big|
\]
\[
 \lesssim |X(s)| 2^{-(1-\beta)M_t-k-j-2l} \frac{ 2^{k+l/2+ n+(1+3\alpha) M_t } }{|X(s)|_{-}}  \lesssim   2^{-\alpha M_t}.
\]
\[
\big| \int_0^s \tilde{V}(s)\cdot \big( H_{k;j,l}^{0;n}(s,\tau  ,X(s)) +  H_{k;j,l}^{e;n}( s,\tau ,X(s))\big)  d  \tau \big|  \lesssim \min\{\frac{2^{-3l/2+n+4\alpha M_t}}{|X(s)|_{-}},  2^{k/2+3(1+3\epsilon) M_t }|X(s)|  + 2^{3(1+3\epsilon)M_t }\}. 
\]
Hence finishing the proof of our desired estimates (\ref{march16eqn3}) and  (\ref{march4eqn92}). 
\end{proof}

\begin{lemma}\label{smallllargen}
Under the assumptions \textup{(\ref{aproiriestimate})} and \textup{(\ref{march12eqn31})},  for any fixed $t_1, t_2  \in [T^{\ast},  T^{\ast\ast} ]\subset[0,t]$,  $k\in \mathbb{Z}, j\in\mathbb{Z}_{+}$, $l\in [-j, 2]\cap \mathbb{Z}$, s.t.,  $k\leq 40M_t, j\in [(1-\alpha)M_t,(1+\epsilon)M_t]$,  and $k\geq   -l +j + (1-3\alpha/2)M_t $,  if moreover we have  $l\leq -2(1-7\alpha)M_t/3$ and $n\geq -M_t/2-7\alpha M_t$, then    the following estimate holds,
\[
 \big| \int_{t_1}^{t_2} \int_0^s C_1(X(s),V(s))\cdot \big( H_{k;j,l}^{0;n}(s,\tau,X(s )) +  H_{k;j,l}^{e;n}(s,\tau,X(s))\big)  d \tau d s \big| + \big| \int_{t_1}^{t_2} \int_0^s  C_2(X(s),V(s))\cdot  H_{k;j,l}^{b;n}(s,\tau ,X( s))  d \tau d s \big|
\]
\be\label{march14eqn31}
\lesssim \sum_{i=1,2}|X(t_i)| \min\{ \frac{2^{-2M_t+15\alpha  M_t  }}{|X(t_i)|_{-}}, 2^{M_t/2+3\alpha M_t } |X(t_i)| + 2^{ -2M_t/3+8\alpha  M_t  }\} +  2^{-M_t/6+16\alpha  M_t}(t_2-t_1). 
\ee 
\[
\big| \int_{t_1}^{t_2} \int_0^s \tilde{V}\cdot \big( H_{k;j,l}^{0;n}(s,\tau,X(s )) +  H_{k;j,l}^{e ;n}(s,\tau,X(s))\big)  d \tau d s \big|\lesssim   2^{ M_t  /3+7\alpha M_t } + \int_{t_1}^{t_2} \min\{ \frac{ 2^{-M_t/2+10\alpha M_t }}{|X(s)|_{-}}, 2^{3M_t/2+4\alpha M_t}\} ds \]
\be\label{march16eqn11}
+   \int_{t_1}^{t_2}  |K(s, X(s))|  \min\{  \frac{2^{- 3M_t/2+24\alpha  M_t}}{|X(s)|_{-} },2^{ M_t/2+3\alpha M_t}|X(s)|+ 2^{-2M_t/3+8\alpha M_t} \}  ds. 
\ee

\end{lemma}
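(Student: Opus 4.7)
The plan is to apply the integration-by-parts identity (\ref{feb28eqn68}) to the nested time integral $\int_{t_1}^{t_2} \int_0^s C(X(s),V(s)) \cdot H_{k;j,l}^{m;n}(s,\tau,X(s)) \, d\tau \, ds$, reducing matters to estimating the three pieces $G_{m}^{n;i}(C)(t_1,t_2)$, $i=1,2,3$, displayed in (\ref{feb28eqn70})--(\ref{feb28eqn71}). Each is a bilinear expression of the type controlled by Lemma \ref{bilinearestimaterough}, built from the symbol $\tilde{m}_{k,j,l;m}$ of (\ref{feb28eqn74}), the normal-form denominator $(|\xi|+\hat{V}(s)\cdot \xi)^{-1}$, and the angular cutoff $\varphi_{n;l}(\angle(\xi,-\tilde{V}(s)))$. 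In the current regime $l \leq -2(1-7\alpha)M_t/3$ and $n \geq -M_t/2-7\alpha M_t$, the localization forces $|\xi|+\hat{V}(s)\cdot \xi \gtrsim 2^{k}(2^{2n}+2^{-2(1-\beta)M_t})$, so this denominator is not catastrophic and the combined symbol still satisfies an $\mathcal{S}^\infty_{k,j,l}$-type bound with explicit gains in $l$ and $n$.

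First I would handle the boundary piece $G_{m}^{n;2}(C)(t_1,t_2)$. This is exactly of the form treated in Lemma \ref{endpointcase}, with an additional $C$-factor. For the $C_1, C_2$ case the pointwise bound $|C_i| \lesssim |X(t_i)|\,2^{-(1-\beta)M_t}$ combined with the symbol estimates (\ref{march14eqn80})--(\ref{march14eqn51}) and the two branches of (\ref{march10eqn2})/(\ref{march14eqn100}) in Lemma \ref{bilinearestimaterough} should produce exactly the $|X(t_i)|\,\min\{\cdots\}$ contribution appearing in (\ref{march14eqn31}). For the $\tilde{V}$ version of (\ref{march16eqn11}), the loss of the $|X|$ prefactor is absorbed by choosing the worse of the two branches of the bilinear estimate, yielding the $2^{M_t/3+7\alpha M_t}$ endpoint contribution.

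Next I would bound $G_{m}^{n;1}(C)(t_1,t_2)$, which is the quadratic term obtained by substituting the Vlasov equation (\ref{march19eqn1}) for $\partial_\tau g$. There is no boundary contribution, and a direct application of Lemma \ref{bilinearestimaterough} to the modified symbol, together with the factors $|C_i|\lesssim |X(s)|\,2^{-(1-\beta)M_t}$, the angular gain from small $l$ (which shrinks $\|\tilde{m}_{k,j,l;u}\|_{\mathcal{S}^\infty_{k,j,l}}$) and from large $n$ (which tames the normal-form denominator), should yield an integrand bounded uniformly by $2^{-M_t/6+16\alpha M_t}$ after balancing exponents. Integrating in $s$ produces the $(t_2-t_1)$ factor of (\ref{march14eqn31}); in the $\tilde{V}$ case one keeps $|K(s,X(s))|$ explicit, giving the $|K|$-weighted integrand in (\ref{march16eqn11}).

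The main obstacle will be $G_{m}^{n;3}(C)(t_1,t_2)$, where $\partial_s$ falls on the multiplier $\mathcal{M}_{m}^{n}(C)$ of (\ref{feb28eqn72}). By the characteristic ODE (\ref{characteristicseqn}), this derivative reintroduces $K(s,X(s))$ through $\partial_s V(s)$, and it is applied to the three $s$-dependent pieces: the coefficient $C(X(s),V(s))$, the angular cutoff $\varphi_{n;l}(\angle(\xi,-\tilde{V}(s)))$ (costing up to $2^{-n}$), and the denominator $|\xi|+\hat{V}(s)\cdot\xi$ (costing an extra factor of the same denominator). The strategy is to re-apply Lemma \ref{bilinearestimaterough} to this differentiated symbol, absorb the $2^{-n}$ loss using $n\geq -M_t/2-7\alpha M_t$, exploit $l\leq -2(1-7\alpha)M_t/3$ to retain the angular gains of the original symbol, and show that the resulting contribution is either absorbable into the $|K(s,X(s))|$-weighted term of (\ref{march16eqn11}) (when the derivative produces $K$) or into the $2^{-M_t/6+16\alpha M_t}(t_2-t_1)$ error of (\ref{march14eqn31}) (when the derivative hits the symbol alone). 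Tracking the exponents of $M_t$ through this chain, with the relations $\alpha=3\beta=1/100$ and $\epsilon=60/N_1$, is where the bulk of the technical care is needed; the $\min$ structure in the final bounds reflects the two regimes $|X(s)| \lesssim 1$ and $|X(s)| \gtrsim 1$ in the bilinear estimate of Lemma \ref{bilinearestimaterough}.
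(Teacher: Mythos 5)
Your overall architecture matches the paper: integrate by parts in characteristic time via (\ref{feb28eqn68}), split into $G_m^{n;1}, G_m^{n;2}, G_m^{n;3}$, bound the boundary terms $G_m^{n;2}$ and the differentiated-multiplier terms $G_m^{n;3}$ (further split according to whether $\partial_s$ hits $C$, the angular cutoff, or the denominator, the latter two reintroducing $K(s,X(s))$ through the characteristic ODE) by the symbol bounds of the type (\ref{march14eqn80})--(\ref{march14eqn92}) together with the two branches (\ref{march10eqn2}) and (\ref{march14eqn100}) of Lemma \ref{bilinearestimaterough}. That part of your plan is essentially the paper's proof.

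The genuine gap is your treatment of $G_m^{n;1}$. First, you mischaracterize it: it is not ``the quadratic term obtained by substituting the Vlasov equation with no boundary contribution,'' but precisely the diagonal boundary term of the $s$-integration by parts, in which the electromagnetic field and $\widehat{f}$ are evaluated at the \emph{same} time as the phase $e^{iX(s)\cdot\xi}$, with no wave propagator $e^{i(s-\tau)|\xi|}$ left. Because of this, Lemma \ref{bilinearestimaterough} cannot be ``directly applied'' to it: both estimates of that lemma control $\int_0^s|T^{l,n}_{k;j}(K,f)(s,\tau,x)|\,d\tau$ and their proof exploits the Kirchhoff/sphere-average structure coming from the factor $e^{i(s-\tau)|\xi|}$, which is absent in $G_m^{n;1}$. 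The paper has to prove a separate pointwise-in-time kernel estimate for exactly this term (Lemma \ref{firsttermestimate}, via the kernel bounds (\ref{march3eqn81}), (\ref{march3eqn97}), a Cauchy--Schwarz argument, a case split on $|X(\tau)|$ and the rough field bound (\ref{march3eqn51})), and your blind argument supplies no substitute for it. Relatedly, your bookkeeping of where the final terms come from is off: in the paper the $2^{-M_t/6+16\alpha M_t}(t_2-t_1)$ error and the $|K(s,X(s))|$-weighted integrand in (\ref{march16eqn11}) arise from the $G^{n;3}$ pieces (\ref{march16eqn10}) and (\ref{march16eqn18}), while $G^{n;1}$ yields the $2^{-M_t/2+10\alpha M_t}(t_2-t_1)$ and $\int\min\{2^{-M_t/2+10\alpha M_t}/|X(s)|_{-},2^{3M_t/2+4\alpha M_t}\}ds$ contributions; in particular no factor $|K(s,X(s))|$ can be ``kept explicit'' in $G^{n;1}$, since there the field enters only inside the convolution, not evaluated along the characteristic. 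This misassignment indicates the exponent balancing you defer has not actually been checked, and the missing estimate for the diagonal term is the substantive hole to fill.
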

\begin{proof}
For this case, we do integration by parts in characteristic time. Recall the decomposition  (\ref{feb28eqn68}) and the associated symbol in (\ref{feb28eqn72}). Recall (\ref{symbolassumption2}). 
Based on possible destination of $\p_s  \mathcal{M}_{ m}^n(C)(s, \xi, v )$, we separate  $G_{ m}^{n;3}(C) (t_1,t_2)$ further into three parts as follows, 
\be\label{march15eqn83}
G_{ m}^{n;3}(C) (t_1,t_2)=G_{ m;1}^{n;3}(C) (t_1,t_2)+G_{ m;2}^{n;3}(C) (t_1,t_2)+  G_{ m;3}^{n;3}(C) (t_1,t_2)
\ee
\[
G_{ m;1}^{n;3}(C) (t_1,t_2) = -  \int_{t_1}^{t_2} \int_0^s  \int_{\R^3}  \int_{\R^3} \int_{\R^3} \big( e^{i X(s ) \cdot \xi + i s |\xi| -i \tau |\xi| }\big)         \hat{V}(s)\cdot \nabla_x C( X(s ), V(s ))  \tilde{m}_{k,j,l;m} ( \xi, v ) \cdot   \big[ \big(\widehat{E}(\tau, \xi-\eta)
 \]
\be\label{march14eqn59}
+ \hat{v}\times \widehat{B}(\tau, \xi-\eta) \big) \widehat{f}(\tau,\eta, v) \big] \big({ (|\xi|+\hat{V}(s  )\cdot \xi  ) } \big)^{-1}   \varphi_{n;l}(\angle(\xi, \tilde{V}(s)))  d v d \tau  d s   d \xi, 
\ee
\[
G_{ m;2}^{n;3}(C) (t_1,t_2) = -  \int_{t_1}^{t_2} \int_0^s  \int_{\R^3}  \int_{\R^3} \int_{\R^3} \big( e^{i X(s ) \cdot \xi + i s |\xi| -i \tau |\xi| }\big)        \widetilde{\mathcal{M}_{ m }^n}(C)(s, \xi  )\cdot   \big[ \  \tilde{m}_{k,j,l;m} ( \xi, v )\big(\widehat{E}(\tau, \xi-\eta) \]
\be\label{march14eqn121}
+ \hat{v}\times \widehat{B}(\tau, \xi-\eta) \big) \widehat{f}(\tau,\eta, v) \big]    \varphi_{n;l}(\angle(\xi, \tilde{V}(s)))  d v d \tau  d s   d \xi,
\ee
\[
G_{ m;3}^{n;3}(C) (t_1,t_2) = -  \int_{t_1}^{t_2} \int_0^s  \int_{\R^3}  \int_{\R^3} \int_{\R^3} \big( e^{i X(s ) \cdot \xi + i s |\xi| -i \tau |\xi| }\big)       C( X(s ), V(s )) \  \tilde{m}_{k,j,l;m} ( \xi, v ) \cdot   \big[\big(\widehat{E}(\tau, \xi-\eta) \]
\be\label{march14eqn64}
+ \hat{v}\times \widehat{B}(\tau, \xi-\eta) \big) \widehat{f}(\tau,\eta, v) \big] \p_s \big(  (|\xi|+\hat{V}(s  )\cdot \xi  )^{-1}     \varphi_{n;l}(\angle(\xi, \tilde{V}(s))) \big) d v d \tau  d s   d \xi,\quad a \in \{1,2\},
\ee
where
\be\label{march14eqn62}
 \widetilde{\mathcal{M}_{ m }^n}(C)(s, \xi  )= (K(s,X(s), V(s))\cdot \nabla_v C( X(s ), V(s ))   (|\xi|+\hat{V}(s  )\cdot \xi  )^{-1} 
.
\ee
Recall (\ref{symbolassumption2}) and (\ref{march17eqn1}). As a result of direct computations, we have
\be\label{march14eqn91}
\|   (|\xi|+\hat{V}(s  )\cdot \xi  )^{-1}\|_{\mathcal{S}^\infty_{k;n,l}} \lesssim 2^{-k-2n }, 
\ee
\be\label{march24eqn21}
 \|    \p_s \big(  (|\xi|+\hat{V}(s  )\cdot \xi  )^{-1}     \varphi_{n;l}(\angle(\xi, \tilde{V}(s))) \big)\|_{\mathcal{S}^\infty_{k;n,l}} \lesssim \sum_{K\in \{E, B\}}  |K(s, X(s))| 2^{-(1-\beta)M_t-k-3n},
\ee
\be\label{march14eqn57}
     \|\hat{V}(s)\cdot \nabla_x C_1( X(s ), V(s ))  \tilde{m}_{k,j,l;e} ( \xi, v ) \|_{\mathcal{S}^\infty_{k,j,l}} \lesssim 2^{-(1-\beta)M_t-k-j-2l+n}, \quad   \hat{V}(s)\cdot \nabla_x C_2( X(s ), V(s ))  =0, 
\ee
\be\label{march14eqn92}
\|   \widetilde{\mathcal{M}_{ m }^n}(C_1)(s, \xi  )\|_{\mathcal{S}^\infty_{k}}  + \|   \widetilde{\mathcal{M}_{ m }^n}(C_2)(s, \xi  )\|_{\mathcal{S}^\infty_{k;n,l}} \lesssim \sum_{K\in \{E, B\}} |X(s)||K(s, X(s))| 2^{-k-2n -2(1-\beta) M_t}. 
\ee 
\be
\|   \widetilde{\mathcal{M}_{ m }^n}(\tilde{V})(s, \xi  )\|_{\mathcal{S}^\infty_{k;n,l}}  \lesssim  \sum_{K\in \{E, B\}} |K(s, X(s))| 2^{-k-2n-(1-\beta) M_t}. 
\ee

With the above preparation, we are ready to estimate the terms in decompositions (\ref{feb28eqn68}) and (\ref{march15eqn83}) one by one.  Recall (\ref{feb28eqn70}). 
From the estimate (\ref{march4eqn19}) in Lemma \ref{firsttermestimate}, we have
\be\label{march16eqn1}
|G_{e}^{n;1}(C_1)(t_1,t_2)|+|G_{0}^{n;1}(C_1)(t_1,t_2)| + |G_{b}^{n;1}(C_2)(t_1,t_2)| \lesssim 2^{- M_t/2+10\alpha  M_t }(t_2-t_1), 
\ee
\be\label{march16eqn2}
|G_{ e}^{n;1}(\tilde{V}) (t_1,t_2)| + |G_{ 0}^{n;1}(\tilde{V}) (t_1,t_2)|\lesssim  \int_{t_1}^{t_2} \min\{ \frac{ 2^{ -M_t/2+10\alpha  M_t }}{|X(s)|_{-}}, 2^{3 M_t/2+4\alpha  M_t}\} ds.
\ee

Recall (\ref{feb28eqn69}). From the estimates (\ref{march14eqn51}) and  (\ref{march14eqn91}),    and the estimates (\ref{march10eqn2}) and (\ref{march14eqn100}) in Lemma \ref{bilinearestimaterough}, we have
\[
|G_{ e}^{n;2}(C_1) (t_1,t_2)| + |G_{ 0}^{n;2}(C_1) (t_1,t_2)| +  |G_{ b}^{n;2}(C_2) (t_1,t_2)|    \lesssim\sum_{i=1,2} 2^{-k-2n} 2^{-(1-\beta)M_t-k-j-2l} |X(t_i)|
\]
\[
 \times \min\{ \frac{2^{k+l/2+n+(1+ 3\alpha) M_t}}{|X(t_i)|_{-}}, 2^{3k/2+3j+2n +l+\epsilon  M_t}   |X(t_i)|  + 2^{k+3j+2n+2l+(1+ 3\epsilon) M_t}\}.
\]
\be\label{march16eqn8}
\lesssim \sum_{i=1,2}|X(t_i)| \min\{ \frac{2^{-2 M_t+15\alpha M_t  }}{|X(t_i)|_{-}}, 2^{M_t/2+2\alpha M_t } |X(t_i)| + 2^{ -2M_t/3+8\alpha M_t  }\}
\ee
 From the estimates (\ref{march15eqn1}) and (\ref{march14eqn91}) and the   estimates (\ref{march10eqn2}) and (\ref{march14eqn100}) in Lemma \ref{bilinearestimaterough}, we have
\[
|G_{ e}^{n;2}(\tilde{V}) (t_1,t_2)| + |G_{ 0}^{n;2}(\tilde{V}) (t_1,t_2)|\]
\[
\lesssim \sum_{i=1,2}  2^{-k-j-2l+n}2^{-k-2n} \min\{ \frac{2^{k+l/2+(1+3\alpha) M_t }}{|X(t_i)|_{-}},2^{3k/2+3j+2n +l+\epsilon  M_t } |X(t_i)| + 2^{k+3j+2n+2l+(1+3\epsilon) M_t }  \}
\]
\be\label{march16eqn14}
\lesssim \sum_{i=1,2} \min\{\frac{2^{- M_t  + 15\alpha  M_t   }}{|X(t_i)|_{-}}, 2^{3 M_t /2+2\alpha  M_t }|X(t_i)| + 2^{ M_t  /3+7\alpha M_t }\}\lesssim  2^{ M_t  /3+7\alpha M_t }. 
\ee
 
Recall (\ref{march14eqn59}). From the estimates (\ref{march14eqn57}) and (\ref{march14eqn80}) and the   estimates (\ref{march10eqn2}) and (\ref{march14eqn100}) in Lemma \ref{bilinearestimaterough}, we have
\[ 
|G_{ e;1}^{n;3}(C_1) (t_1,t_2)| + |G_{ 0;1}^{n;3}(C_1) (t_1,t_2)| +  |G_{ b;1}^{n;3}(C_2) (t_1,t_2)|\]
\[
 \lesssim \int_{t_1}^{t_2}2^{-(1-\beta)M_t-k-j-2l+n}2^{-k-2n}  \min\{\frac{2^{k+l/2+(1+3\alpha)M_t}}{|X(s)|_{-}},
 2^{3k/2+3j+2n +l+\epsilon  M_t}  |X(s)|  + 2^{k+3j+2n+2l+(1+3\epsilon) M_t} \} ds 
\]
\be\label{march16eqn9}
\lesssim \int_{t_1}^{t_2}\min\{\frac{2^{-2   M_t+16\alpha M_t }}{|X(s)|_{-}},2^{  M_t/2+3\alpha  M_t } |X(s)|+2^{-2 M_t/3+8\alpha   M_t} \} d s \lesssim  2^{-2 M_t/3+8\alpha   M_t}(t_2-t_1). 
\ee
Since the function $\tilde{V}:(X(s), V(s))\longrightarrow \tilde{V}(s)$ doesn't depend on $X(s)$,  we have
\be\label{march15eqn61}
|G_{ e;1}^{n;2}(\tilde{V}) (t_1,t_2)| + |G_{ 0;1}^{n;2}(\tilde{V}) (t_1,t_2)|=0.
\ee

Recall 	(\ref{march14eqn121}) and (\ref{march14eqn64}). From the estimates of symbols in  (\ref{march14eqn80}), (\ref{march14eqn51}), (\ref{march14eqn91}), (\ref{march24eqn21}), and (\ref{march14eqn92}),   the   estimates (\ref{march10eqn2}) and (\ref{march14eqn100}) in Lemma \ref{bilinearestimaterough}, and the rough estimate of the electromagnetic field (\ref{march3eqn51})   in Lemma \ref{roughestimateofelectromag}, we have
 \[ 
\sum_{a=2,3}|G_{ e;a}^{n;3}(C_1) (t_1,t_2)| + |G_{ 0;a }^{n;3}(C_1) (t_1,t_2)| +  |G_{ b;a}^{n;3}(C_2) (t_1,t_2)|\]
\[
 \lesssim \int_{t_1}^{t_2} |X(s)||K(s, X(s))| \min\{  2^{-k-2(1-\beta)M_t-3n} 2^{-k-j-2l}   \frac{2^{k+l/2+n+(1+3\alpha) M_t }}{|X(s)|_{-}}, \]
\[
  2^{-k-2(1-\beta) M_t -3n} 2^{-k-j-2l} \big[ 2^{3k/2+3j+2n +l+\epsilon  M_t}  |X(s)| + 2^{k+3j+2n+2l+(1+3\epsilon) M_t}\big]\} d s  
 \]	
 \be\label{march16eqn10}
 \lesssim \int_{t_1}^{t_2}  \min\{2^{(1 +20\alpha) M_t} |X(s)| +2^{-M/6+16\alpha M}, \frac{2^{- 3M/2+24\alpha M}}{|X(s)|_{-}}\}  ds  \lesssim 2^{-M/6+16\alpha M}(t_2-t_1). 
 \ee
From the estimates of symbols in  (\ref{march15eqn1}),  (\ref{march14eqn91}), and (\ref{march24eqn21}),  and the   estimates (\ref{march10eqn2}) and (\ref{march14eqn100}) in Lemma \ref{bilinearestimaterough}, we have 
  \[ 
\sum_{a=2,3}|G_{ e;a}^{n;3}(\tilde{V}) (t_1,t_2)| + |G_{ 0;a }^{n;3}(\tilde{V}) (t_1,t_2)| \lesssim \int_{t_1}^{t_2} |K(s, X(s))|   2^{-k-(1-\beta)M_t-2n} \]
\[
\times 2^{-k-j-2l}   \min\{  \frac{2^{k+l/2+(1+3\alpha)M_t}}{|X(s)|_{-}},
  2^{3k/2+3j+2n +l+\epsilon  M_t }  |X(s)| + 2^{k+3j+2n+2l+(1+3\epsilon) M_t}   \} d s  
 \]	
\be\label{march16eqn18}
\lesssim \int_{t_1}^{t_2}  |K(s, X(s))|  \min\{  \frac{2^{- 3M_t/2+24\alpha  M_t}}{|X(s)|_{-} },2^{ M_t/2+3\alpha  M_t}|X(s)|+ 2^{-2M_t/3+8\alpha M_t} \}  ds. 
\ee
To sum up, recall the decompositions (\ref{feb28eqn68}) and (\ref{march15eqn83}), our desired estimate (\ref{march14eqn31}) holds from the estimates (\ref{march16eqn1}), (\ref{march16eqn8}), (\ref{march16eqn9}), and (\ref{march16eqn10}). The desired estimate (\ref{march16eqn11}) holds from the estimates (\ref{march16eqn2}), (\ref{march16eqn14}), (\ref{march15eqn61}), and (\ref{march16eqn18}). 
 
\end{proof}

\begin{lemma}\label{firsttermestimate}
Under the assumptions \textup{(\ref{aproiriestimate})} and \textup{(\ref{march12eqn31})}, for  any $t_1, t_2 \in  [T^{\ast},  T^{\ast\ast} ]\subset[0,t],$  any differentiable function $C:\R_x^3 \times R_v^3\longrightarrow \mathbb{R}^3$,  
 any fixed $k\in \mathbb{Z}, j\in\mathbb{Z}_{+}$, $l\in [-j, 2]\cap \mathbb{Z}, n\in [l,2]$, s.t.,   $k\leq 40M_t, j\in [(1-\alpha)M_t,(1+\epsilon)M_t]$,  and $k\geq   -l +j + (1-3\alpha/2)M_t $,     the following estimate holds for any $m\in \{0,u,b\}$,
\be\label{march4eqn19}
\big| G_{ m}^{n;1}(C) (t_1,t_2)  \big| \lesssim \int_{t_1}^{t_2} |C(X(\tau), V(\tau))| \min \{  (|X(\tau)|_{-})^{-1} 2^{-3k/2+2j-2 l  +   2\alpha M_t }   ,  2^{-k/2+2j -l} \}d \tau.
\ee 
\end{lemma}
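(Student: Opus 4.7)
The plan is to recognize $G_{m}^{n;1}(C)(t_1,t_2)$ in (\ref{feb28eqn70}) as a time integral of a \emph{bilinear} form with no phase oscillation in the time variable, apply the bilinear estimates of Lemma \ref{bilinearestimaterough} pointwise in $s$, and then carry the scalar weight $|C(X(s),V(s))|$ outside. Concretely, writing $e^{iX(s)\cdot\xi}=e^{iX(s)\cdot\xi+is|\xi|-is|\xi|}$ and factoring $\mathcal{M}_{m}(C)(s,\xi,v)=C(X(s),V(s))^{\top}\,\tilde{m}_{k,j,l;m}(\xi,v)\,(|\xi|+\hat{V}(s)\cdot\xi)^{-1}\,\varphi_{n;l}(\angle(\xi,-\tilde{V}(s)))$, the integrand of (\ref{feb28eqn70}) at each fixed $s$ coincides, up to sign, with $C(X(s),V(s))^{\top}\cdot T_{k;j}^{l,n}(K,f)(s,s,X(s))$ from (\ref{march10eqn1}), with the identifications $\omega_{0}=-\tilde{V}(s)$, $m_1(\xi,v)=\tilde{m}_{k,j,l;m}(\xi,v)$, and $m_2(\xi,\omega_0)=(|\xi|+\hat{V}(s)\cdot\xi)^{-1}$. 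I would then bound the two symbol norms via (\ref{march14eqn80}) and (\ref{march14eqn91}): $\|m_1\|_{\mathcal{S}^{\infty}_{k,j,l}}\lesssim 2^{-k-j-2l}$ (the case $m=0$ is even better since $l\leq 2$), and $\|m_2\|_{\mathcal{S}^{\infty}_{k;n,l}}\lesssim 2^{-k-2n}$.

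Next I would invoke the bilinear estimates (\ref{march10eqn2}) and (\ref{march14eqn100}) of Lemma \ref{bilinearestimaterough}, specialized to the diagonal $\tau=s$, so that the $|s-\tau|$-dependent factors visible in the $G^{n;2}$ applications (cf.\ (\ref{march16eqn8}) and (\ref{march16eqn14})) drop out. The two alternatives of the resulting $\min$ should read $2^{k/2+3j+2n+2\alpha M_t}/|X(s)|_{-}$ and $2^{3k/2+3j+2n+l}$, i.e.\ the same profile that, after multiplication by $\|m_1\|\cdot\|m_2\|$, reproduces the exponents of (\ref{march16eqn8}) in the $G^{n;2}$ case. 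Here the factors $2^{\pm 2n}$ cancel between $\|m_2\|_{\mathcal{S}^\infty_{k;n,l}}$ and the bilinear kernel, so the pointwise bound collapses to $|C(X(s),V(s))|\cdot\min\{2^{-3k/2+2j-2l+2\alpha M_t}/|X(s)|_{-},\,2^{-k/2+2j-l}\}$. Integrating over $s\in[t_1,t_2]$ and summing over $K\in\{E,B\}$ (already inside Lemma \ref{bilinearestimaterough}) yields (\ref{march4eqn19}). The remaining exponent bookkeeping uses only the hypotheses $k\leq 40M_t$, $j\in[(1-\alpha)M_t,(1+\epsilon)M_t]$, and $k\geq -l+j+(1-3\alpha/2)M_t$.

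The hard part will be confirming the precise $n$-dependence of Lemma \ref{bilinearestimaterough}. The right-hand side of (\ref{march4eqn19}) is manifestly $n$-independent, yet both $\|m_2\|_{\mathcal{S}^\infty_{k;n,l}}$ and the bilinear kernel carry explicit $n$-powers. The cancellation $2^{-2n}\cdot 2^{+2n}=1$ must come from the angular localization $\varphi_{n;l}(\angle(\xi,\omega_0))$ in (\ref{march10eqn1}): the sector $\{\xi:|\xi|\sim 2^{k},\,\angle(\xi,\omega_0)\lesssim 2^{n}\}$ has volume $\sim 2^{3k+2n}$, producing the $2^{+2n}$ factor on the Fourier side that neutralizes the resolvent loss $2^{-2n}$ in $\|m_2\|_{\mathcal{S}^\infty_{k;n,l}}$. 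Once this $n$-cancellation is pinned down from the statement of Lemma \ref{bilinearestimaterough}, the rest of the argument is a routine collection of exponents, with no further smoothing or integration-by-parts needed because the $\tau$-integral was already spent when producing the $G^{n;1}$ endpoint term in (\ref{feb28eqn68}).
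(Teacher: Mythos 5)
Your identification of the structure of $G_{m}^{n;1}(C)$ as a diagonal ($\tau=s$) bilinear term with no time oscillation, and your target exponents (they do reproduce (\ref{march4eqn19}) after multiplying by $\|m_1\|_{\mathcal{S}^\infty_{k,j,l}}\lesssim 2^{-k-j-2l}$ and $\|m_2\|_{\mathcal{S}^\infty_{k;n,l}}\lesssim 2^{-k-2n}$), are consistent with the paper. However, the central step of your plan does not work: Lemma \ref{bilinearestimaterough} only estimates the $\tau$-\emph{integrated} quantity $\int_0^s|T_{k;j}^{l,n}(K,f)(s,\tau,x)|\,d\tau$, so there is no ``specialization to the diagonal $\tau=s$'' to invoke. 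Its first (radial-gain) bound (\ref{march10eqn2}) carries the factor $(2^n+|\tilde x\times\omega_0|)^{-1}2^{k+l/2+n+(1+3\alpha)M_t}$ and is proved through the Kirchhoff representation of $e^{i(s-\tau)|\xi|}$ (sphere averages of radius $s-\tau$, the splitting near $|x|\approx s-\tau$, etc.), all of which degenerate at $\tau=s$; the pointwise-in-$s$ bound $2^{k/2+3j+2n+2\alpha M_t}/|X(s)|_-$ that you posit is neither stated in the lemma nor obtainable from it as a black box. In short, the first alternative of the $\min$ in (\ref{march4eqn19}) is exactly the part your proposal leaves unproved.

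What the paper actually does for this term is a separate, direct kernel argument. One writes $G_m^{n;1}(C)$ in physical space with the two kernels $K_{k;l,m}(z,v)$ and $K^1_{k;n}(\tau,y)$ (Fourier transforms of $\tilde m_{k,j,l;m}\varphi_{l;-j}\psi_{[k-4,k+4]}$ and of $C(X,V)(|\xi|+\hat V\cdot\xi)^{-1}\psi_k\varphi_{n;l}$), whose decay (\ref{march3eqn81}), (\ref{march3eqn97}) follows from integration by parts in $\xi$ along $\tilde v$, $\tilde v^\perp$ and $\tilde V$, $\tilde V^\perp$. The absolute bound $2^{-k/2+2j-l}$ comes from Cauchy--Schwarz in $z$ against the conserved $L^2$ norm of $E,B$, together with the $v$-support volume $2^{3j+2n}$. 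The $|X(\tau)|_-^{-1}$ bound requires a case split: if $|X(\tau)|\le 2^{-k-l+2\epsilon M_t}$ it follows trivially from the absolute bound; if $|X(\tau)|\ge 2^{-k-l+2\epsilon M_t}$ one uses that $|X(\tau)-y-z|\sim|X(\tau)|$ on the effective support of the kernels, the rough pointwise electromagnetic estimate (\ref{march3eqn51}) giving a loss $2^{(1+\epsilon)M_t}$, and the kernels' $L^1$ norms; the loss $2^{(1+\epsilon)M_t}$ is then absorbed into $2^{-3k/2+\cdots+2\alpha M_t}$ precisely because $k\ge -l+j+(1-3\alpha/2)M_t\approx 2M_t$. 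Your proposal never brings in (\ref{march3eqn51}), the smallness split on $|X(\tau)|$, or this absorption-by-large-$k$ step, and these are the actual content of the lemma rather than ``routine collection of exponents.''
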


\begin{proof}
Recall (\ref{feb28eqn70}) and (\ref{feb28eqn72}). In terms of kernel, we have
 \[
 |G_{ m}^{n;1}(C) (t_1,t_2)| = \int_{t_1}^{t_2}  \int_{\R^3}\int_{\R^3} \int_{\R^3} K^1_{k; n }(\tau, y   ) K_{k; l,m }(z, v  ) \big(E(\tau,X(\tau) - y-z   )+ \hat{v}\times B(\tau,X(\tau) - y -z  ) \big)\]
\be\label{march3eqn93}
 \times f(\tau, X(\tau)- y -z  , v) \varphi_{j}(v)  \psi_{\leq n+4}(\angle(v,\tilde{V}(\tau ) ))  d v  d y d z d \tau, 
 \ee
where the kernel $K_{k; l,m }(z, v  )$   and  the kernel $K^1_{k; n }(y, v  )$ are defined as follows, 
\be\label{march3eqn96}
K_{k; l,m }(z, v  )= \int_{\R^3} e^{i z \cdot \xi}    \tilde{m}_{k,j,l;m} ( \xi, v )\varphi_{l;-j}(\angle(\xi, -v)) \psi_{[k-4,k+4]}(\xi) d \xi, 
\ee
\[
K^1_{k; n }(\tau, y  ):= \int_{\R^3} e^{i y \cdot \xi} 	C( X(\tau ), V(\tau ))  \big({ (|\xi|+\hat{V}( \tau   )\cdot \xi  ) } \big)^{-1}   \psi_k(\xi)   \varphi_{n;l}(\angle(\xi,-\tilde{V}(\tau )))d \xi. 
\]
In the equality (\ref{march3eqn93}),  we used the fact that the angle between $  v$ and $\tilde{V}(\tau )$ is less than $2^{n+2 } $ because the angle between $- v$ and $\xi$ is less than $2^l$ and the angle between  between $\xi$ and $-\tilde{V}( \tau )$, which is a fixed vector, is less than $2^n$. 

Recall the detailed formulas of symbols  $  \tilde{m}_{k,j,l;m} ( \xi, v ) $ in  (\ref{feb28eqn74}).  After doing integration by parts in $\xi$ in directions $\tilde{v}$ and $\tilde{v}^{\bot}$, the following   estimate holds for any $m\in \{0,e,b\},$
 \be\label{march3eqn81}
|K_{k; l,m }(z, v  )| \lesssim 2^{2k-j} (1+2^k|z\cdot \tilde{v}|)^{-1000/\epsilon}(1+2^{k+l}|z\times  \tilde{v}|)^{-1000/\epsilon}.
 \ee
After doing integration by parts in $\xi$ in directions $\tilde{V}(\tau )$ and $\tilde{V}(\tau )^{\bot}$, the following   estimate holds,
\be\label{march3eqn97}
 \big| K^1_{k; n }(\tau, y  )\big|\lesssim 2^{2k }  |C(X(\tau), V(\tau))| (1+2^k|y\cdot \tilde{V}(\tau )|)^{-100/\epsilon}(1+2^{k+n}|y\times  \tilde{V}(\tau )|)^{-100/\epsilon}.
 \ee

From the Cauchy-Schwarz inequality for the integration with respect to $z$, the conservation laws in (\ref{conservationlaw}), the boundedness of $L^\infty_{x,v}-$norm of $f$ and the volume of support of $v$. As a result, the following estimate holds from the estimates of kernels in  (\ref{march3eqn81}) and  (\ref{march3eqn97}), 
\[
 |G_{ m}^{n;1}(C) (t_1,t_2)|\]
 \[
  \lesssim \sum_{K\in \{E, B\}} \int_{t_1}^{t_2} \int_{\R^3}  \int_{\R^3}  |K^1_{k; n }(\tau, y   )|  \psi_{\leq n+4}(\angle(v,\tilde{V}(\tau ) )) \| K(\tau, X(\tau)-y-z)\|_{L^2_z} \| K_{k; l,m }(z, v  ) \|_{L^2_z} \varphi_{j}(v)     d y  d vd \tau 
\]
\be\label{march15eqn87}
\lesssim \int_{t_1}^{t_2} |C(X(\tau), V(\tau))| 2^{-k/2-j-2n-l}  2^{3j+2n }d \tau \lesssim \int_{t_1}^{t_2} |C(X(\tau), V(\tau))| 2^{-k/2+2j -l}  d \tau  .
\ee

For fixed $\tau$, based on the possible size of $|X(\tau)|$, we separate into two cases as follow. 

\noindent \textbf{Case $1$}:\quad If $|X(\tau)|\leq 2^{-k-l+2\epsilon M_t}$. \qquad From the estimate (\ref{march15eqn87}), we have
\be\label{march4eqn31}
 |G_{ m}^{n;1}(C) (t_1,t_2)| \lesssim \int_{t_1}^{t_2} |C(X(\tau), V(\tau))|  (|X(\tau)|)^{-1}   { 2^{-3k/2 +2j-2l + 2\epsilon M_t } } d \tau.
\ee

\noindent \textbf{Case $2$}:\quad If $|X(\tau)|\geq 2^{-k-l+2\epsilon M_t }$. \qquad 
For this case we have $|X(\tau) -y-z |\sim |X(\tau)| $ if $|y|+|z|\leq 2^{-k-l + \epsilon M_t }$, which is the main subcase we only have to consider. If $|y|+|z|\geq 2^{-k-l+ \epsilon M_t }$, then from the estimates of kernels in (\ref{march3eqn97}) and (\ref{march3eqn81}), we know that the size of  kernels is very small.  From the rough  estimate  of electromagnetic field (\ref{march3eqn51}) in Lemma \ref{roughestimateofelectromag}, the estimates of kernels and the volume of support of $v$, we have
\be\label{march4eqn32}
 |G_{ m}^{n;1}(C) (t_1,t_2)|  \lesssim \int_{t_1}^{t_2} |C(X(\tau), V(\tau))| \big[  2^{-2k-j-2n-2l}(|X(\tau)|_{-} )^{-1}   2^{(1+\epsilon)M_t} 2^{3j+2n} +2^{-500M_t}\big] d \tau 
\ee
 To sum up, our desired estimate (\ref{march4eqn19}) holds from the estimates (\ref{march15eqn87}),  (\ref{march4eqn31})  and (\ref{march4eqn32}). 
\end{proof}

\begin{lemma}\label{largefrequencycase}
Under the assumptions \textup{(\ref{aproiriestimate})} and \textup{(\ref{march12eqn31})},  for any  $t_1, t_2  \in [T^{\ast}, T^{\ast\ast} ]\subset[0,t],m \in\{0,e,b\}$,  the following estimate holds for any differentiable function $C:\R_x^3\times \R_v^3\longrightarrow \R^3$ s.t., $\|C(x,v)\|_{L^\infty_{x,v}}\lesssim 1$, 
\be\label{march1eqn31}
 \big| \int_{t_1}^{t_2} \int_0^s C( X(s ), V(s ))    High_{k;j,l}^{m}  (s, \tau, X(s )) d\tau d s  \big| \lesssim 	2^{-2k/7 +5M_t +4j}  \big( \min\{2^{-j}, 2^{- N_1 j + N_1 M_t }   \} \big)^{1/14}.
\ee	
\end{lemma}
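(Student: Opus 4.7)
\textbf{Plan for Lemma \ref{largefrequencycase}.} In the regime $k\ge 40 M_t$, the factor $2^{-2k/7}$ in \eqref{march1eqn31} is so large that it dominates every polynomial-in-$M_t$ or -$j$ loss, so I would prove the bound by a brute-force $L^p$-interpolation/Bernstein argument, without invoking any of the finer cancellation structure (normal form or characteristic-time integration by parts) used at moderate frequencies.

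Starting from the formula \eqref{march4eqn76}, I take the inverse Fourier transform in $\xi$ and write
\[
High_{k;j,l}^m(s,\tau,X(s)) = \int_{\R^3}\int_{\R^3} \mathcal{K}^{s-\tau}_{k,l,m}(X(s)-y,v)\,[(E+\hat v\times B)f](\tau,y,v)\,\varphi_j(v)\,dy\,dv,
\]
with $\mathcal{K}^{\sigma}_{k,l,m}(z,v):=\int e^{iz\cdot\xi+i\sigma|\xi|}\tilde m_{k,j,l;m}(\xi,v)\,d\xi$. The symbol bound $\|\tilde m_{k,j,l;m}\|_{\mathcal{S}^\infty_{k,j,l}}\lesssim 2^{-k-j-2l}$ from \eqref{march14eqn80} combined with the $\xi$-support of volume $\lesssim 2^{3k+2l}$ gives the trivial $L^\infty_z$-bound $\|\mathcal K^\sigma(\cdot,v)\|_{L^\infty_z}\lesssim 2^{2k-j}$ and, via Plancherel in $z$, the $L^2_z$-bound $\|\mathcal K^\sigma(\cdot,v)\|_{L^2_z}\lesssim 2^{k/2-j-l}$; interpolation then supplies every intermediate $L^{q'}_z$-norm.

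I would then apply H\"older in $y$ between the kernel and the product, and further H\"older-split the bilinear term into $\|E+\hat v\times B\|_{L^{a}_y}\|f(\tau,\cdot,v)\|_{L^{b}_y}$, interpolating the electromagnetic factor between the $L^2$-conservation law \eqref{conservationlaw} and the pointwise bounds \eqref{march3eqn51}+\eqref{march15eqn35}, and controlling the $f$-factor through \eqref{feb25eqn12} with Lebesgue exponent $14$; this last choice is exactly what produces the $(\min\{2^{-j},2^{-N_1 j+N_1 M_t}\})^{1/14}$ factor in the claim. The integrations in $\tau\in[0,s]$ and $s\in[t_1,t_2]$ each cost at most $t\lesssim 2^{M_t/N_1}$ by the definition of $M_t$, which is negligible.

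The main obstacle will be the precise bookkeeping: the H\"older exponents and interpolation parameters must be chosen so that the combined $k$-dependence of the kernel's $L^{q'}_z$-norm, the Bernstein-type passage to $L^\infty_x$ (needed because we evaluate at $x=X(s)$), and the $L^{a}_y$-norm of the electromagnetic field collapses to exactly $2^{-2k/7}$, and the $M_t$- and $j$-dependence to at most $2^{5M_t+4j}$. Given the generous slack afforded by $k\ge 40M_t$, this is a matter of careful exponent tracking rather than a conceptual difficulty; no cancellation beyond frequency localization, $L^2$-conservation, and the polynomial moment bound \eqref{feb25eqn12} needs to be exploited.
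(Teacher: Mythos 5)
Your plan has a genuine gap: it discards exactly the mechanism that produces the factor $2^{-2k/7}$. In your scheme the only negative power of $k$ available is the single $2^{-k}$ already contained in $\|\tilde m_{k,j,l;m}\|_{\mathcal{S}^\infty_{k,j,l}}\lesssim 2^{-k-j-2l}$, while every subsequent step costs positive powers: your own kernel bounds give $\|\mathcal K^\sigma(\cdot,v)\|_{L^{q'}_z}\lesssim 2^{(2-3/q')k}\cdot(\cdots)$ by interpolation between $L^2_z$ and $L^\infty_z$, so the best case $q'=2$ already leaves a net $2^{+k/2}$, and no choice of H\"older exponents for $\|K\|_{L^a_y}\|f\|_{L^b_y}$ (neither factor carries any $k$-dependence, since $K$ and $f$ are not frequency-localized at $2^k$ in the bilinear term) can turn this into a negative power; an attempted $L^1_z$ kernel bound fares even worse because the wave kernel concentrates on a light-cone shell and its $L^1$ norm grows in $|s-\tau|$ and $2^k$. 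Moreover, $2^{-2k/7}$ in \eqref{march1eqn31} is not slack that "dominates every loss": it is a decay that must be \emph{generated}, because the estimate has to be summed over all dyadic $k\geq 40M_t$. A brute-force H\"older/Bernstein argument therefore cannot close, no matter how the exponents are tracked.

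The paper's proof gets the missing second factor of $2^{-k}$ from integration by parts in the characteristic time $s$ (the Klainerman--Staffilani smoothing), i.e.\ the decomposition \eqref{feb28eqn68}--\eqref{feb28eqn71}: dividing by the phase derivative produces $(|\xi|+\hat V(s)\cdot\xi)^{-1}\lesssim 2^{-k}2^{2(1-\beta)M_t}$, where the bootstrap assumptions \eqref{aproiriestimate}, \eqref{march12eqn31} are used to bound $1-|\hat V(s)|$ from below --- note your argument never uses these hypotheses, which is a warning sign since the lemma explicitly assumes them. Combining $2^{-k-j-2l}\cdot 2^{-k+2(1-\beta)M_t}$ with the $L^{7/4}_x\to L^\infty_x$ Bernstein cost $2^{12k/7}$ yields the net $2^{-2k/7}$, and the remaining factor is estimated exactly as you envisage at the end: H\"older $\||K|\rho_j\|_{L^{7/4}_x}\leq\|K\|_{L^2_x}\|\rho_j\|_{L^{14}_x}$ with the conservation law \eqref{conservationlaw} and \eqref{feb25eqn12} at $p=14$, giving the $(\min\{2^{-j},2^{-N_1j+N_1M_t}\})^{1/14}$ factor; the cubic term $G^{n;3}$ created by the integration by parts carries an extra field factor along the characteristic and is absorbed using the first estimate of \eqref{feb25eqn72}. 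So your endgame matches the paper, but the core step (the characteristic-time integration by parts) cannot be omitted. A secondary, fixable point: \eqref{feb25eqn12} bounds $\|\int f\varphi_j\,dv\|_{L^p_x}$, not $\|f(\tau,\cdot,v)\|_{L^b_y}$ for fixed $v$, so your H\"older splitting in $y$ and $v$ would need to be rearranged accordingly.
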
 
\begin{proof}
For this case, we do integration by parts in characteristic time. Recall the equality  in (\ref{feb28eqn68})  and the corresponding symbols in (\ref{feb28eqn72}), and (\ref{feb28eqn74}). Note that from the $L^\infty_x\longrightarrow L^{7/4}$ type Sobolev embedding,  the estimate of velocity of characteristics in (\ref{aproiriestimate}), and the basic estimate in (\ref{feb25eqn12}), the following estimate holds for any $  m\in \{0,e,b\},$
\[
 \sum_{i=1,2}\sum_{n\in[l,2]\cap \Z}|G_{m}^{n;i}(C)(t_1, t_2)| 
  \lesssim 2^{-j-2l} 2^{12k/7}  2^{-2k} 2^{ 2(1-\beta)M_t} \int_{0}^{t} \| \int_{\R^3} |K(s,x)| f (s,x,v) \varphi_j(v) d v \|_{L^{7/4}_x } d s 
\]
\be\label{march1eqn41}
\lesssim  2^{-j-2l} 2^{-2k/7 + 2M_t}   \int_{0}^{t} \| \int_{\R^3} 
 f (s,x,v) \varphi_j(v) d v\|_{L^{14}_x} ds\lesssim 	2^{-2k/7+2M_t+4j}  \big( \min\{2^{-j}, 2^{-  N_1 j + N_1 M_t }  \} \big)^{1/14}
\ee
Recall the detailed formula of  $ G_{m}^{n;3}(C)(t_1, t_2)$ in  (\ref{feb28eqn71}). Similarly,   by using the same strategy   used in the above estimate, the following estimate holds for  $ G_{m}^{n;3}(C)(t_1, t_2)$ from the first estimate (\ref{feb25eqn72}) in Lemma \ref{roughalongchar2},
\[
 \sum_{i=1,2}\sum_{n\in[l,2]\cap \Z}|G_{ m}^{n;3} (C)(t_1,t_2)|  \lesssim \sum_{K_1, K_2\in \{E,B\}} 2^{-j-2l} 2^{12k/7}  2^{-2k+2(1-\beta) M_t} 
   \int_{t_1}^{t_2}  |K_1(s, X(s ))| \]
\be\label{march1eqn42}
  \times  \int_{0}^{s}  \| \int_{\R^3} |K_2(\tau, x)| f (\tau,x,v) \varphi_j(v) d v \|_{L^{7/4}_x } d \tau d  s  \lesssim 	2^{-2k/7+5M_t}  2^{4j}  \big( \min\{2^{-j}, 2^{-n j+n M_t }  \} \big)^{1/14}.
\ee
Hence, our desired estimate (\ref{march1eqn31}) holds straightforwardly from the estimate (\ref{march1eqn41}) and the estimate (\ref{march1eqn42}). 
\end{proof}

\vo 

\noindent \textit{Proof of Proposition} \ref{mainproposition}.\qquad  
 
Recall the decomposition of the electromagnetic field in (\ref{march14eqn1}) and the decompositions of $K_j$ in (\ref{march16eqn41}),  (\ref{jan14eqn51}) and (\ref{feb28eqn1}).

 The desired estimate (\ref{march14eqn41}) follows from the second estimate in (\ref{feb25eqn72}) in Lemma \ref{roughalongchar2}, which is used for the case   $j\geq (1+\epsilon) M_t$,  the estimate (\ref{march4eqn1})  in Lemma \ref{roughestimateofelectromag} and the estimate (\ref{march15eqn35}) in Lemma \ref{roughestimateelectro}, which are used for the case   $j\leq (1-\alpha)  M_t$, the estimate (\ref{march2eqn40}) in Lemma \ref{nonormalform}, which is used for the case   $j\in[(1-\alpha)  M_t, (1+\epsilon)  M_t], k\leq -l+j+(1-3\alpha/2) M_t$, the  first estimate in  (\ref{march2eqn53}) in Lemma \ref{endpointcase}, the estimate (\ref{march4eqn92}) in Lemma \ref{largelandsmalln}, the estimate (\ref{march16eqn11}) in Lemma \ref{smallllargen},  which are used for the case  $j\in[(1-\alpha)  M_t, (1+\epsilon)M_t],  -l+  j+(1-3\alpha/2) M_t\leq k\leq 40 M_t$,  and the second estimate   in  (\ref{march2eqn53}) in Lemma \ref{endpointcase} and the estimate (\ref{march1eqn31}) in Lemma \ref{largefrequencycase}, which are used   for the case when $k\geq 40 M_t$.

 Similarly,  from the estimate (\ref{march4eqn1}) in Lemma \ref{roughestimateofelectromag}, the estimate (\ref{march2eqn40}) in Lemma \ref{nonormalform}, the estimate (\ref{march2eqn53}) in Lemma \ref{endpointcase}, the estimate (\ref{march16eqn3}) in Lemma \ref{largelandsmalln}, the estimate (\ref{march14eqn31}) in Lemma \ref{smallllargen}, the estimate (\ref{march1eqn31}) in Lemma \ref{largefrequencycase}, we know that our desired estimate (\ref{march16eqn51}) holds. 
 
\qed

\subsection{A pointwise bilinear estimate }

As summarized in the   Lemma \ref{bilinearestimaterough}, our main goal of this subsection is to prove  two   point-wise estimates for the bilinear form $T_{k;j}^{ l,n}(K, f)(t,s,x)$, which have been used as black boxes  previously in the proof of Lemma \ref{largelandsmalln} and Lemma \ref{smallllargen}.  These two estimates schematically give their dependence to the localized angles and the symbols of the bilinear form.

Recall (\ref{march10eqn1}). From the Kirchhoff's formulas in (\ref{dec20eqn1}) and (\ref{march4eqn100}).  We have the following formulation in terms of kernel, 
\be\label{march11eqn71}
T_{k;j}^{ l,n}(K, f)(s,\tau,x)=  M_{k;j}^{ l,n}(K, f)(s,\tau,x) +  Err_{k;j}^{ l,n}(K, f)(s,\tau,x), 
\ee
where
\[
M_{k;j}^{ l,n}(K, f)(s,\tau,x):=\int_{\R^3} \int_{\R^3} \int_{\R^3} \int_{\mathbb{S}^2}(s-\tau)  K(\tau, x-y-z+ (s-\tau)\theta)    f(\tau,x-y-z + (s-\tau)\theta, v )\]
\be\label{march10eqn11}
\times \mathcal{M}_{k;j}^{  l }(y,\theta,  v) K_{k;n}(z)\varphi_j(v) \psi_{\leq n+4}(\angle(v, -\omega_0))   d \theta d y d zd v , 
\ee
\[
Err_{k;j}^{ l,n}(K, f)(s,\tau,x)= \int_{\R^3} \int_{\R^3} \int_{\R^3} \int_{\mathbb{S}^2}   K(\tau, x-y-z+ (s-\tau)\theta)    f(\tau,x-y-z + (s-\tau)\theta, v )\]
\be\label{march10eqn41}
\times   \mathcal{E}_{k;j }^{  l }(  y, v) K_{k;n}(z)\varphi_j(v) \psi_{\leq n+4}(\angle(v,- \omega_0))  d \theta  d y d zd v, 
\ee
where we used the fact that $\angle( v, -\omega_0)\leq 2^{n+2}$ as $\angle(\xi, \omega_0)\leq 2^n$ and $\angle(\xi,- v)\leq 2^l$ due to the cutoff functions and the assumption that $n\in[l,2]\cap \Z$, and the kernels are defined as follows, 
\be\label{march11eqn1}
\mathcal{M}_{k;j}^{ l }( y, \theta,v) :=    \int_{\R^3} e^{iy\cdot\xi} i(|\xi| +\theta\cdot \xi)  m_1 ( \xi, v ) \psi_k(\xi) \varphi_{l;-j}(\angle(\xi, -v))  d \xi,  
\ee
\be\label{march11eqn2}
\mathcal{E}_{k;j}^{ l }( y, v) :=    \int_{\R^3} e^{iy\cdot\xi}  m_1 ( \xi, v ) \psi_k(\xi) \varphi_{l;-j}(\angle(\xi, -v))  d \xi,  
\quad 
K_{k;n}(z):=\int_{\R^3} e^{iy\cdot\xi}  \varphi_{n;l}(\angle(\xi, \omega_0)) m_2 ( \xi, \omega_0 )  \psi_{[k-4,k+4]}(\xi)  d \xi. 
\ee

  By  doing integration by parts in $\tilde{v}$ ($\omega_0$) direction and  $\tilde{v}^{\bot}$ ($\omega_0^{\bot}$) directions, the following estimate holds from assumptions of symbols in (\ref{symbolassumption1}) and  (\ref{symbolassumption2}),
\be\label{march10eqn10}
\sup_{\theta\in \S^2} \mathcal{M}_{k;j}^{ l }( y, \theta,v)  \lesssim   2^{4k+2l} \| m_1\|_{\mathcal{S}^\infty_{k,j,l}} (1+2^k|y\cdot \tilde{v}|)^{-10^3/\epsilon}(1+2^{k+l}|y\times  \tilde{v}|)^{- 10^3/\epsilon} .
\ee
\be\label{march10eqn50}
|\mathcal{E}_{k;j }^{l }( y, v)| \lesssim 2^{3k+2l} \| m_1\|_{\mathcal{S}^\infty_{k,j,l}} (1+2^k|y\cdot \tilde{v}|)^{-10^3/\epsilon}(1+2^{k+l}|y\times  \tilde{v}|)^{-10^3/\epsilon}. 
\ee 
\be\label{march10eqn12}
|K_{k;n}(z)| \lesssim    2^{3k+2n} \| m_2\|_{\mathcal{S}^\infty_{k;n,l }} (1+2^k|z\cdot {\omega}_0|)^{-10^3/\epsilon}(1+2^{k+n}|z\times  {\omega}_0|)^{-10^3/\epsilon} .
\ee

\begin{lemma}\label{bilinearestimaterough}
For any fixed   $x\in \R^3/\{0\},$ $s\in  [0,t],$ $k\in \mathbb{Z}, j\in\mathbb{Z}_{+}$, $l \in [-j, 2]\cap \mathbb{Z}, n\in [l, 2]\cap \mathbb{Z}$, s.t.,    $k\leq 40M_t, j\in [(1-\alpha)M_t,(1+\epsilon) M_t]$,   $k\geq -l +j + (1-3\alpha/2)M_t$,    the following estimate holds for the bilinear form defined in \textup{(\ref{march10eqn1})},
\be\label{march10eqn2}
\sum_{K\in \{E, B\}}\int_0^s   \big|T_{k;j}^{ l,n}(K, f)(s,\tau,x)\big| d \tau\lesssim  \frac{ \| m_1\|_{\mathcal{S}^\infty_{k,j,l}} \| m_2\|_{\mathcal{S}^\infty_{k;n,l}}}{2^n +| \tilde{x}\times \omega_0|  }  \frac{ 2^{k+l/2+ n+(1+3\alpha)  M_t } }{r_{-}}, \quad r:=|x|. 
\ee
Moreover, the following rough estimate also holds,
\be\label{march14eqn100}
 \sum_{K\in \{E, B\}}\int_0^s   \big|T_{k;j}^{ l,n}(K, f)(s,\tau,x)\big| d \tau\lesssim \| m_1\|_{\mathcal{S}^\infty_{k,j,l}}   \| m_2\|_{\mathcal{S}^\infty_{k;n,l }}\big[ 2^{3k/2+3j+2n +l+\epsilon  M_t }  r  + 2^{k+3j+2n+2l+(1+3\epsilon) M_t}\big] .
\ee
\end{lemma}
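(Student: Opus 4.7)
The plan is to work from the physical-space decomposition (\ref{march11eqn71}), $T_{k;j}^{l,n}=M_{k;j}^{l,n}+\mathrm{Err}_{k;j}^{l,n}$, and treat the two asserted bounds in parallel. By (\ref{march10eqn10})--(\ref{march10eqn12}), $\mathcal{M}_{k;j}^{l}$ and $\mathcal{E}_{k;j}^{l}$ are essentially supported on the $\tilde v$-tube $\{|y\cdot\tilde v|\lesssim 2^{-k},\ |y\times\tilde v|\lesssim 2^{-k-l}\}$, while $K_{k;n}$ is essentially supported on the $\omega_0$-tube $\{|z\cdot\omega_0|\lesssim 2^{-k},\ |z\times\omega_0|\lesssim 2^{-k-n}\}$; hence $L^1_y$-bounds on $\mathcal{M},\mathcal{E}$ cost $2^k\|m_1\|_{\mathcal{S}^\infty_{k,j,l}}$ and $\|m_1\|_{\mathcal{S}^\infty_{k,j,l}}$ respectively, and $L^1_z$ on $K_{k;n}$ costs $\|m_2\|_{\mathcal{S}^\infty_{k;n,l}}$. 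The $\mathrm{Err}$ term has no $(s-\tau)$ weight and is controlled by crude $L^1_y\cdot L^1_z$ kernel reductions against $\|Kf\|_{L^\infty}$; in both regimes this is dominated by the $M$-estimate below, so I would focus on $M$.

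For the rough bound (\ref{march14eqn100}), I would combine $\|f\|_{L^\infty_{x,v}}\lesssim 1$ with the $v$-volume $\sim 2^{3j+2n}$ produced by $\varphi_j(v)\psi_{\leq n+4}(\angle(v,-\omega_0))$, and split $K$ via (\ref{march20eqn35}) into a singular near-origin piece $\lesssim |y|^{-3/4}\psi_{\leq 2}(|y|)$ and a uniformly bounded far piece $\lesssim 2^{3(1+2\epsilon)M_t}$. After the $L^1_y, L^1_z$ reductions, what remains is a Duhamel integral on the backward cone,
\[
\int_0^s (s-\tau)\int_{\S^2}g(\tau,x+(s-\tau)\theta)\,d\theta\,d\tau=\int_{|w|\leq s}|w|^{-1}g(s-|w|,x+w)\,dw,
\]
against which the singular piece of $K$ integrates to an $O(r)$ factor, giving the first term of (\ref{march14eqn100}), while the far piece gives the second term.

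For the sharp bound (\ref{march10eqn2}), the radial symmetry of $f$ (in the sense of (\ref{radialsym})) and of the induced $E,B$ is exploited via Lemma \ref{basicestimateint}. The cleanest route is directly in Fourier on (\ref{march10eqn1}): the phase $x\cdot\xi+(s-\tau)|\xi|$ has $\xi$-gradient $x+(s-\tau)\tilde\xi$, whose component perpendicular to $\omega_0$ is $\gtrsim r|\tilde x\times\omega_0|-O(2^n(s-\tau))$ when $\tilde\xi$ lies in the $2^n$-cap around $\omega_0$ supplied by $\varphi_{n;l}(\angle(\xi,\omega_0))$; repeated integration by parts in $\xi$ transverse to $\omega_0$, using the symbol bounds (\ref{symbolassumption1})--(\ref{symbolassumption2}), extracts the anisotropic gain $2^n/(2^n+|\tilde x\times\omega_0|)$. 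In the complementary regime $|\tilde x\times\omega_0|\lesssim 2^n$ the sphere average of the radial product $Kf$ is estimated by (\ref{feb29eqn1}) of Lemma \ref{basicestimateint}, the $y$-tube coordinates $y=a\tilde v+b\omega_v$ with $\omega_v\in S_v$ aligning naturally with the hypothesis of that lemma; the sphere contribution $(r(s-\tau)|r-(s-\tau)|)^{-1}$ integrated in $\tau$ yields at most a logarithmic loss absorbed into $2^{(1+3\alpha)M_t}$, and combining with the moment control $M_{N_1}(t)\lesssim 2^{N_1M_t}$ (through (\ref{feb25eqn12})) produces the stated $2^{k+l/2+n+(1+3\alpha)M_t}/r_-$ profile.

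The main obstacle is synchronising the Fourier-side integration by parts that yields the cap factor $2^n/(2^n+|\tilde x\times\omega_0|)$ with the physical-side sphere-average estimate: one must verify that the number of derivatives of $m_1,m_2$ permitted by (\ref{symbolassumption1})--(\ref{symbolassumption2}) is enough to run the integrations by parts, and that the resulting $r$-weights correctly match the sharp sphere-crossing localisation $\tau\approx s-r$ inherent in Lemma \ref{basicestimateint} after the $y$- and $z$-translations are absorbed. Everything else is bookkeeping of tube volumes against the weights $2^{3j+2n}$ from the $v$-support and $2^k, 1$ from the kernel $L^1$-norms.
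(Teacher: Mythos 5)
Your starting point (the kernel decomposition (\ref{march11eqn71}) with the tube estimates (\ref{march10eqn10})--(\ref{march10eqn12}), plus Lemma \ref{basicestimateint} and the moment bound (\ref{feb25eqn12})) matches the paper, but the core mechanism you propose for the sharp bound (\ref{march10eqn2}) does not work. You want to extract the factor $(2^n+|\tilde{x}\times\omega_0|)^{-1}$ by integrating by parts in $\xi$ transverse to $\omega_0$, claiming the transverse phase gradient is $\gtrsim r|\tilde{x}\times\omega_0|-O(2^n(s-\tau))$. This lower bound fails on most of the domain of integration: since $s-\tau$ ranges up to $s$, the perpendicular component of $x+(s-\tau)\tilde{\xi}$ vanishes for suitable $\xi$ in the $2^n$-cap whenever $s-\tau\gtrsim r|\tilde{x}\times\omega_0|2^{-n}$, even when $|\tilde{x}\times\omega_0|\gg 2^n$, so a non-stationary-phase argument cannot run uniformly; moreover each such integration by parts differentiates $\widehat{K}(\tau,\xi-\eta)$, which amounts to unavailable weighted ($|y|K$-type) control of the electromagnetic field. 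In the paper the angular factor $C_n(x,\omega_0)$ comes from an entirely different place: after splitting the Kirchhoff sphere variable $\theta$ into a negligible ``good'' part and ``essential'' caps of radius $\bar{\theta}^l_{s-\tau}$ around the $v$-sectors (the partition (\ref{march11eqn140})--(\ref{march11eqn11})), one applies the radial sphere-average estimates (\ref{march6eqn1}) and (\ref{feb29eqn1}) inside a Cauchy--Schwarz splitting of $Kf$ into $(|K|^2f)^{1/2}\cdot f^{1/2}$ (see (\ref{march10eqn22})--(\ref{march10eqn37})), and then runs a four-subcase analysis in the size of $r$ and the distance of $\tau$ from the light-cone time $s-r$. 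Your sketch omits the $\theta$-cap decomposition, the $v$-sector partition, and the Cauchy--Schwarz splitting, and these are exactly what produce the exponent $2^{k+l/2+n}$; acknowledging the ``synchronisation'' issue at the end does not fill this gap.

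The rough bound (\ref{march14eqn100}) as you derive it also does not close numerically. After your $L^1_y,L^1_z$ kernel reductions (cost $2^k$) and the $v$-volume $2^{3j+2n}$, bounding $K$ by the uniform piece of (\ref{march20eqn35}) and integrating over the full Duhamel cone gives a contribution of order $2^{k+3j+2n}\,t^2\,2^{3(1+2\epsilon)M_t}$, which is far larger than the target second term $2^{k+3j+2n+2l+(1+3\epsilon)M_t}$ once $l$ is close to $-j$; and the claim that the singular piece $|y|^{-3/4}$ ``integrates to an $O(r)$ factor'' is not correct --- the spherical average of $|\cdot|^{-3/4}$ at radius $s-\tau$ from $x$ produces $r^{-1}$, not $r$. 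The paper instead obtains the $r$-proportional term from the crude Cauchy--Schwarz-in-$\xi$ bound (\ref{march15eqn91}) applied only on the short window $|s-\tau|\lesssim r+2^{-k-l+\epsilon M_t}$, and the second term from (\ref{march15eqn92}), which combines the decay $|K|\lesssim 2^{(1+\epsilon)M_t}/|s-\tau|$ away from the cone with the per-sector $\theta$-cap measure $(\bar{\theta}^l_{s-\tau})^2$; without that angular localisation the needed factor $2^{2l}$ is simply not there. So both halves of the proposal are missing the decisive ingredients.
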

\begin{proof}
 
Let $ \tilde{\delta}:=2^{-k-l+\epsilon  M_t}$. Based on the possible size of $t-s$, we separate into two cases as follows.

\textbf{Case $1$}:\quad If $|s-\tau|\leq  \tilde{\delta}$, i.e., $\tau\in [s-\tilde{\delta}, s]$  .\qquad  Recall (\ref{march10eqn1}).  Note that after using the Cauchy-Schwarz inequality for the integration with respect to $\xi$ and the volume of support of $\xi$ and $v$, the following estimate holds if $r:=|x|\leq \tilde{\delta} 2^{ \epsilon  M_t}$, 
\[
|  T_{k;j}^{ l,n}(K, f)(s,\tau,x) |\lesssim   \| m_1\|_{\mathcal{S}^\infty_{k,j,l}}  \| m_2\|_{\mathcal{S}^\infty_{k;n,l }} 2^{(3k+2l)/2} \int_{\R^3}\|K(\tau,x)f(\tau,x,v)\|_{L^2_x } \varphi_j(v)\psi_{\leq n+4}(\angle(-v, \omega_0))  dv
\]
\be\label{march10eqn13}
 \lesssim   \| m_1\|_{\mathcal{S}^\infty_{k,j,l}}  \| m_2\|_{\mathcal{S}^\infty_{k;n,l }}2^{3k/2+3j+2n+l } \lesssim   \| m_1\|_{\mathcal{S}^\infty_{k,j,l}} \| m_2\|_{\mathcal{S}^\infty_{k;n,l }} \frac{2^{ k/2+3 j+2n +2\epsilon M_t }}{r}.
\ee

 Note that, $|x-y-z+(s-\tau)\theta|\sim |x|$ if $|y|+|z|\leq \tilde{\delta}$ and $|x|\geq \tilde{\delta} 2^{ \epsilon  M_t}$. Moreover, from the estimates of kernels in  (\ref{march10eqn10}--\ref{march10eqn12}), we know that   the kernel is extremely small if $|y|+|z|\geq  \tilde{\delta}$. Therefore, from the rough estimate of electromagnetic field (\ref{march3eqn51}) in Lemma \ref{roughestimateofelectromag} and the volume of support of $v$,  the following estimate holds if  $r:=|x|\geq \tilde{\delta} 2^{ \epsilon  M_t}$,
\be\label{march10eqn14}
|T_{k;j}^{ l,n}(K, f)(s,\tau,x)|\lesssim   \| m_1\|_{\mathcal{S}^\infty_{k,j,l}}  \| m_2\|_{\mathcal{S}^\infty_{k;n,l }} \big[   (s-\tau) 2^k  \frac{2^{(1+\epsilon)M_t}}{r} 2^{3j+2n} + 2^{-10M_t }  \|\int_{\R^3}|K(\tau,x)|f(\tau,x,v)\varphi_j(v) d v \|_{L^1_x}\big].
\ee
Combining the estimate (\ref{march10eqn13}) and the estimate  (\ref{march10eqn14}), we have
\[
\int_{s-\tilde{\delta}}^s  |  T_{k;j}^{ l,n}(K, f)(s,\tau,x)|ds \lesssim   \| m_1\|_{\mathcal{S}^\infty_{k,j,l}} \| m_2\|_{\mathcal{S}^\infty_{k;n,l }} \big[ \frac{2^{- k/2+3j+2n -l+3\epsilon M_t }}{r} +\frac{ 2^{- k +3j+2n -2l+(1+6\epsilon) M_t }}{r} + 2^{-5M_t}\big]
\]
\be\label{march10eqn15}
\lesssim  \| m_1\|_{\mathcal{S}^\infty_{k,j,l}}   \| m_2\|_{\mathcal{S}^\infty_{k;n,l }}  \frac{   2^{k+l/2+ 2n+(1+10\alpha)  M_t/2 } }{r_{-}} .
\ee

\textbf{Case $2$}:\quad If $|s-\tau|\geq \tilde{\delta}$, i.e., $\tau\in [0,s-\tilde{\delta}]$. \qquad

To better see the angular relations, we localize the angle of $v$ by using the following partition of unity. 
Let $\eta_i(\omega), \omega \in \mathbb{S}^2, i\in \{1,\cdots, K\}$ be  a labeled 	partition of unit  for the unit  sphere such that the support of $\eta_i(\cdot)$ is contained in small ball on sphere with radius of size $2^l$ and the supports of $\eta_i(\cdot)$ overlaps only finite times.  More precisely, we have
\be\label{march11eqn140}
\forall \omega\in \mathbb{S}^2, \quad 1=\sum_{i=1,\cdots, L} \eta_i(\omega), \quad  |supp(\eta_i(\cdot))|\lesssim 2^{2l},\quad  |L|\sim 2^{-2l}, \quad \eta_i(\omega)\geq 0,\quad  \eta_i(\omega)\eta_j(\omega)=0\,\,\textup{if}\,\, |i-j|\geq C. 
\ee
Moreover, we fix a choice of   $\{\omega_i\}_{i=1}^L \subset \mathbb{S}^2$. s.t., $\omega_i \in supp(\eta_i(\cdot))$. Therefore, $ supp(\eta_i(\cdot))\subset supp(\psi_{\leq l+10}(\cdot -\omega_i))$.

Once we localize $\tilde{v}$ inside  $ supp(\eta_i(\cdot))$ , due to the cutoff function $\varphi_{l;-j}(\angle(-v, \xi))$, we know that $\xi$ is also  localized  in a sector of size $2^l$ centered at $\omega_i$. Moreover,    due to the cutoff function $\psi_{\leq n+4}(\angle(\xi,\omega_0)$, where $\omega_0\in \mathbb{S}^2$ is fixed,  we know that    there are at most $2^{2n-2l}$ sectors on sphere   to be  considered.

Recall the integral (\ref{march4eqn41}) in the proof of Kirchhoff's formula. From the stationary phase point of view, we know that $\theta$ is   localized near $\xi/|\xi|$ and $-\xi/|\xi|$ with radius of size $(|s-\tau||\xi|)^{-1/2}$. Hence, it is also localized roughly  near the fixed direction $\omega_0$ with radius of size $(|s-\tau||\xi|)^{-1/2} +2^n$. 

 Motivated from the above discussion, we  define $\bar{\theta}_{s}^l:=(1+|s| 2^k)^{-1/2 }2^{\epsilon M_t} +2^{l+20}$ and the following two cutoff functions, 
\be\label{march11eqn11}
\varphi_{ess;i}(s,  \theta):= \sum_{\mu\in\{+,-\}} \psi(\angle(\theta, \mu \omega_i)/\bar{\theta}_{s}^l) \psi(\angle(\theta, \mu \omega_0)/\bar{\theta}_{s}^n), \quad \varphi_{gd }(s,  \theta)= 1-  \sum_{i=1,\cdots L }  \varphi_{ess;i}(s,  \theta).
\ee 

Recall  (\ref{march10eqn11}). After using the partition of unity in (\ref{march11eqn140}) for the direction of $v$, $\tilde{v}$, and the partition of unity in (\ref{march11eqn11}) for $\theta$, we have
\be\label{march11eqn58}
M_{k;j}^{ l,n}(K, f)(s,\tau,x)=   M_{k;j}^{ l,n;gd}(K, f)(s,\tau,x)   + \sum_{i=1,\cdots K_n}  M_{k;j}^{ l,n;ess,i}(K, f)(s,\tau,x) ,\quad K_n\sim 2^{2n-2l}
\ee
where  the angular localized $ M_{k;j }^{l,n;u}(K, f)( s,\tau,x), u\in\{gd,ess;i\},$ in physical space are defined as follows, 
\[
M_{k;j}^{ l,n;u}(K, f)(s,\tau,x) := \int_{\R^3} \int_{\R^3}    \int_{\R^3} \int_{\mathbb{S}^2}(s-\tau)  \mathcal{M}_{k;j }^{ l  }(y,\theta,  v) K_{k;n}(z)  \psi_{\leq n+4}(\angle(-v, \omega_0))       \big) \varphi_j(v)  \varphi_{u }(s-\tau ,  \theta)   \eta_i(\tilde{v})\]
 \be\label{march10eqn30}
 \times  K(\tau, x-y-z+ (s-\tau)\theta) f(\tau,x-y -z+ ( s-\tau)\theta, v )    d y d z d v d \theta,\quad u\in\{gd,ess;i\}.
\ee

Let $\xi$ be fixed. After changing coordinates such that $\xi\cdot\theta=|\xi|\cos\sigma$, we  do integration by parts in $\sigma $ many times. As a result, the following estimate holds for any $N\in \mathbb{Z}_{+}$ if $|s-\tau|\geq 2^{-k+\epsilon  M_t}$, 
\be 
  \big|\int_{\mathbb{S}^2} e^{i  (s-\tau) \xi\cdot \theta} \varphi_{gd }(s-\tau,  \theta) d \theta\big|\lesssim_N  2^{-N M_t }. 
\ee
Therefore, from the above estimate and the volume of support of $\xi$, the following estimate holds for any $\tau\in [0,s-\tilde{\delta}]$,
\[
 \big|  M_{k;j }^{ l,n;gd}(K, f)(s,\tau,x) \big|   \lesssim  \| m_1\|_{\mathcal{S}^\infty_{k,j,l}}   \| m_2\|_{\mathcal{S}^\infty_{k;n,l }}  2^{-200M_t} 2^{(3k+2l)/2}  \int_{\R^3}\| |K(\tau,z)| f(\tau,z,v)\|_{L^2_z } \varphi_j(v) dv
\]
\be\label{march11eqn59}
 \lesssim 2^{-10M_t}  \| m_1\|_{\mathcal{S}^\infty_{k,j,l}}   \| m_2\|_{\mathcal{S}^\infty_{k;n,l }} . 
\ee

Now we focus on the essential part. We first rule out the case   $|y|+|z|\geq \tilde{\delta}2^{-\epsilon M_t/2}=2^{-k-l+\epsilon M_t/2}$, in which the kernels provide sufficiently fast decay. Note that, from the rough estimate of the electromagnetic field (\ref{march3eqn51}) in Lemma \ref{roughestimateofelectromag} and the estimates of kernels in (\ref{march10eqn10}) and (\ref{march10eqn12}), we have 
\[
  \big| \int_{\R^3} \int_{\mathbb{S}^2}\int_{|y|+ |z|\geq \tilde{\delta}2^{-\epsilon M_t/2} }   (s-\tau)    \mathcal{M}_{k;j }^{ l  }(y,\theta,  v) K_{k;n}(z)  \psi_{\leq n+4}(\angle(-v, \omega_0))       \big) \varphi_j(v)  \varphi_{u }(s-\tau ,  \theta)   \eta_i(\tilde{v})\]
\[
 \times  K(\tau, x-y-z+ (s-\tau)\theta) f(\tau,x-y -z+ ( s-\tau)\theta, v )    d y d z  d \theta d v\big|
\]
\[
\lesssim  \| m_1\|_{\mathcal{S}^\infty_{k,j,l}} \| m_2\|_{\mathcal{S}^\infty_{k;n,l}} \int_{\R^3} \int_{\mathbb{S}^2}\int_{|y|+ |z|\geq \tilde{\delta} 2^{-\epsilon M_t/2} } (s-\tau) \frac{ 2^{4k+2l}2^{3k+2n} (1+2^{k+l}|y|)^{-10^3/\epsilon}(1+2^{k+n}|z|)^{-10^3/\epsilon}}{|x-y-z +(s-\tau)\theta|_{-}} 
\]
\be\label{march24eqn110}
\times 2^{3j+(1+\epsilon)M_t} d y d z d \theta  \lesssim \| m_1\|_{\mathcal{S}^\infty_{k,j,l}} \| m_2\|_{\mathcal{S}^\infty_{k;n,l}}.
\ee

Recall the estimate of kernel $K_{k;n}(z)$ in  (\ref{march10eqn12}).  Note that, the following estimate holds from the above estimate (\ref{march24eqn110}) and  the Cauchy-Schwarz inequality for fixed  $z$,  
\[
 \sum_{i=1, \cdots, K_n}  \big|  M_{k;j }^{l,n;ess,i}(K, f)(s,\tau,x)  \big| \lesssim   \| m_2\|_{\mathcal{S}^\infty_{k;n,l }}  \big[\| m_1\|_{\mathcal{S}^\infty_{k,j,l}} 
\]
 \be\label{march10eqn21}
  +\int_{|z|\leq \tilde{\delta} 2^{-\epsilon M_t/2} } (s-\tau)  2^{3k+2n} (1+2^k|z\cdot {\omega}_0|)^{-10^3/\epsilon}(1+2^{k+n}|z\times  {\omega}_0|)^{-10^3/\epsilon} F(s,\tau,x,z) d z  \big],
 \ee
 where 
 \[
0\leq F(s,\tau,x,z)\]
\[
 \lesssim \big[ \sum_{i=1, \cdots, K_n}   \int_{|y|\leq\tilde{\delta} 2^{-\epsilon M_t/2} }\int_{\mathbb{S}^2}  \int_{\R^3} |    \mathcal{M}_{k;j }^{ l  }(y,\theta,  v)| |K(\tau, x-y-z+(s-\tau)\theta)|^2 \eta_i(\tilde{v}) \varphi_{ess;i}(s-\tau,  \theta) \varphi_j(v) d v d \theta  dy \big]^{1/2}
 \]
 \be\label{march10eqn22}
 \times \big[ \sum_{i=1,\cdots K_n}   \int_{|y|\leq\tilde{\delta}2^{-\epsilon M_t/2}}\int_{\mathbb{S}^2}  \int_{\R^3}  |  \mathcal{M}_{k;j }^{ l  }( y, \theta,v)| |f(\tau, x-z-y+(s-\tau)\theta, v)|  \eta_i(\tilde{v}) \varphi_{ess;i}(s-\tau,  \theta) \varphi_j(v) d v d \theta  dy  \big]^{1/2}, 
 \ee

Note that,   
\[
\textup{if\,\,} |\tilde{v}-\theta|\geq 2^5 \bar{\theta}_{s-\tau}^l\,\quad \Longrightarrow\quad (\tilde{v}, \theta)\notin \cup_{i=1,\cdots, L} supp(\eta_i(\tilde{v})\varphi_{ess;i}(t,  \theta))\subset \mathbb{S}^2\times \mathbb{S}^2. 
\]
With the above fact,   from the estimate of kernel in  (\ref{march10eqn10}) and the estimate  (\ref{march6eqn1}) in Lemma \ref{basicestimateint} and the point-wise estimate of electromagnetic field (\ref{march3eqn51}) in Lemma \ref{roughestimateofelectromag} and the volume of support of $v$,  the following estimate holds for any fixed $z\in \R^3$, 
\[
 \sum_{i=1, \cdots, K_n} \sum_{K\in \{E, B\}}   \int_{|y|\leq\tilde{\delta}2^{-\epsilon M_t/2}} \int_{\mathbb{S}^2}  \int_{\R^3}   |\mathcal{M}_{k;j }^{ l  }( y, \theta,v)| |K(\tau, x-y-z+(s-\tau)\theta)|^2 \eta_i(\tilde{v}) \varphi_{ess;i}(s-\tau,  \theta) \varphi_j(v) d v d \theta  dy 
\]
 \be\label{march10eqn36}
\lesssim      \int_{|y|\leq\tilde{\delta}2^{-\epsilon M_t/2}} 2^{4k+2l} \| m_1\|_{\mathcal{S}^\infty_{k,j,l}}    \frac{   \bar{\theta}_{s-\tau}^n 2^{3j  } \big( \bar{\theta}_{s-\tau}^l \big)^2 \min\{| |x-y-z|-(s-\tau)|^{-1},2^{(1+\epsilon)M_t} |s-\tau| \}}{(\bar{\theta}_{s-\tau}^n  +| \tilde{x}\times \omega_0|)|x-y-z|(s-\tau) }  d y. 
\ee

 Moreover, after changing coordinates for $y$ in terms of the  cylinder coordinates system with $\tilde{v}$ as axis as in (\ref{marcheqn16eqn69}),  from the estimate of kernel in  (\ref{march10eqn10}), the following   estimate holds after using the estimate (\ref{feb29eqn1}) in Lemma \ref{basicestimateint} or using  the volume of support of $\theta$ and $v$, 
\[
 \sum_{i=1,\cdots K_n}   \int_{|y|\leq\tilde{\delta} 2^{-\epsilon M_t/2} } \int_{\mathbb{S}^2}  \int_{\R^3}    | \mathcal{M}_{k;j }^{ l  }( y, \theta,v)|  |f(\tau, x-z-y+(s-\tau)\theta, v)|  \eta_i(\tilde{v}) \varphi_{ess;i}(s-\tau,  \theta) \varphi_j(v) d v d \theta  dy 
\]
\[
\lesssim  \sum_{i=1,\cdots K_n}\int_{\R} \int_{\R}   \int_{\R^3}  \int_{\mathbb{S}_v }  \int_{\mathbb{S}^2}     2^{4k+2l} \| m_1\|_{\mathcal{S}^\infty_{k,j,l}}  (1+2^k a )^{-10}(1+2^{k+l}r  )^{-10}  \eta_i(\tilde{v}) \varphi_{ess;i}(s-\tau,  \theta) \varphi_j(v)      \]
\[
\times  f(\tau, x-z- a\tilde{v}-r\omega_v +(s-\tau)\theta, v )  d\theta   d \omega_v d v rdr da
\]
 \be\label{march10eqn37} 
\lesssim \| m_1\|_{\mathcal{S}^\infty_{k,j,l}} \min\{ \frac{  2^{k } C_n(x, \omega_0)   \bar{\theta}_{s-\tau}^n 2^{-j}  }{|x-z|(s-\tau) | |x-z|-(s-\tau)|},   |K_n| 2^{k}   2^{3j+2l}\big( \bar{\theta}_{s-\tau}^{l}\big)^2\},
\ee
where $C_n(x, \omega_0) :=  ({2^n +| \tilde{x}\times \omega_0|})^{-1}$.

With the above preparations,  based on the possible size of ``$r$'' and ``$\tau$'', we separate into four sub-cases as follows.

\textbf{Subcase $1$}:\quad If $r\leq \tilde{\delta}2^{ \epsilon M_t}$  and  $\tau\in [s-r- \tilde{\delta}, s-r+ \tilde{\delta}]\cap[0,s-\tilde{\delta}]$. \qquad Recall (\ref{march10eqn30}). Note that we have $|  s-\tau|\lesssim \tilde{\delta}2^{\epsilon   M_t}$ for the case we are considering.  From the Cauchy-Schwarz inequality for the integration with respect to $y$, the estimate of kernels in (\ref{march10eqn10}) and (\ref{march10eqn12}), and the volume of support of $\theta$ and $v$, we have
\[  
\sum_{i=1,\cdots K_n}   \int_{ s-r-  \tilde{\delta}}^{ \min\{s-\tilde{\delta}, s-r+  \tilde{\delta}\}} \big|  M_{k;j}^{ l,n;ess,i}(K, f)(s,\tau,x) \big| d \tau\]
\[
 \lesssim   \| m_1\|_{\mathcal{S}^\infty_{k,j,l}}  \| m_2\|_{\mathcal{S}^\infty_{k;n,l }}    \int_{ s-r-  \tilde{\delta}}^{ s-r+  \tilde{\delta}} (s-\tau) |K_n| 2^{5k/2+l   }\big( \bar{\theta}_{s-\tau}^{l}\big)^2 2^{3j+2l}    d \tau
\]
\be\label{march11eqn15}   
 \lesssim  \| m_1\|_{\mathcal{S}^\infty_{k,j,l}}  \| m_2\|_{\mathcal{S}^\infty_{k;n,l }}   2^{5k/2 +3j +2n  + l+6\epsilon   M_t} 2^{-2k-l  } \lesssim \| m_1\|_{\mathcal{S}^\infty_{k,j,l}}  \| m_2\|_{\mathcal{S}^\infty_{k;n,l }}  {    2^{ k  +4\alpha M_t +2n  +l/2 }  } r^{-1}.
 \ee

\textbf{Subcase $2$}:\quad  If $r\geq \tilde{\delta}2^{ \epsilon  M_t}$  and  $\tau\in [s-r- \tilde{\delta}, s-r+ \tilde{\delta}]\cap[0,s-\tilde{\delta}]$. 

For this case we are considering,    we have $|s-\tau|\sim r  $ and $|x-y-z|,|x-z|\sim r $   
 if $|y|+|z|\leq \tilde{\delta}=2^{-k-l+\epsilon  M_t}$.  From the estimate  (\ref{march10eqn22}) and the estimates (\ref{march10eqn36}) and (\ref{march10eqn37}), the following estimate holds for fixed $z\in \R^3$ s.t., $|z|\leq \tilde{\delta},$
  \[
\int_{ s-r-  \tilde{\delta}}^{ s-r+  \tilde{\delta}}  (s-\tau) F(s,\tau,x,z) d \tau\lesssim  \| m_1\|_{\mathcal{S}^\infty_{k,j,l}} \int_{ s-r-  \tilde{\delta}}^{ s-r+  \tilde{\delta}}  (s-\tau)   \big( \min\{ 2^{3j+2n+k} \big( \bar{\theta}_{s-\tau}^{l}\big)^2, \frac{ C_n(x, \omega_0)  2^{k-j} \bar{\theta}_{s-\tau}^n}{r^2||x-z|-(s-\tau)|}	\} \big)^{1/2} \]
\[ 
\times  \big(\int_{|y|\leq \tilde{\delta} 2^{-\epsilon M_t/2} }  \frac{2^{2\epsilon  M_t}   C_n(x, \omega_0) \bar{\theta}_{s-\tau}^n 2^{4k+2l+3j}\big( \bar{\theta}_{s-\tau}^l \big)^2  }{r^2||x-y-z|-(s-\tau)|^{1-\epsilon}}  dy   \big)^{1/2}   d\tau
\]
\[
\lesssim\| m_1\|_{\mathcal{S}^\infty_{k,j,l}}\big[ C_n(x, \omega_0)  {2^{5 k/2 + l+3j/2+\epsilon    M_t }}\big( \int_{ s-r-  \tilde{\delta}}^{ s-r+  \tilde{\delta}}\int_{|y|\leq \tilde{\delta} 2^{-\epsilon M_t/2} } \frac{ 1}{ ||x-y-z|-(s -\tau)|^{1-\epsilon}}  dy d\tau    \big)^{1/2}\] 
\[
\times   \big( \int_{ s-r-  \tilde{\delta}}^{ s-r+  \tilde{\delta}} \min\{  2^{ 2n+3j} \big( \bar{\theta}_{s-\tau}^{ l}\big)^4 \bar{\theta}_{s-\tau}^n  , \frac{2^{ -j}(\bar{\theta}^l_{s-\tau})^2 ( \bar{\theta}_{s-\tau}^n)^2}{r^2||x-z|-(s-\tau)|} \} d \tau \big)^{1/2} \big]\lesssim  \| m_1\|_{\mathcal{S}^\infty_{k,j,l}}  C_n(x, \omega_0) 2^{k+3j/2-l/2+4\epsilon    M_t }
\]
\be\label{march11eqn31}
\times \big(\min\{  \tilde{\delta}2^{3j+2n}(2^{ 4l+n} +  r^{-2}2^{-2k+n} +r^{-5/2} 2^{-5k/2} ), r^{-2} 2^{-j+2l+2n}+ r^{-3} 2^{-j-k+2n} + r^{-4} 2^{-j-2k}\}\big)^{1/2}    
\ee
\be\label{march11eqn32}
\lesssim  \| m_1\|_{\mathcal{S}^\infty_{k,j,l}}C_n(x, \omega_0)2^{k+l/2+n+(1+3\alpha)   M_t} r^{-1}.
\ee
In the above final estimate,  we used the second estimate in (\ref{march11eqn31}) if $r\geq 2^{-k-2l}$ and used the first estimate  in (\ref{march11eqn31}) if $\tilde{\delta}2^{ \epsilon M_t}\leq r\leq 2^{-k-2l}$. 
Therefore, after combining the estimates (\ref{march10eqn21}) and  (\ref{march11eqn32}), we have
\be\label{march10eqn93}
  \sum_{i=1, \cdots, K_n}   \int_{ s-r-  \tilde{\delta}}^{ \min\{s-\tilde{\delta}, s-r+  \tilde{\delta}\}}  \big|  M_{k;j}^{ l,n;ess,i}(K, f)(s,\tau,x) \big| d\tau   \lesssim    C_n(x, \omega_0)   \| m_1\|_{\mathcal{S}^\infty_{k,j,l}}  \| m_2\|_{\mathcal{S}^\infty_{k;n,l }}  2^{k+l/2+n+(1+3\alpha) M_t} r_{-}^{-1}.
\ee
 
\textbf{Subcase $3$}:\quad  If $r\geq \tilde{\delta}2^{ -\epsilon M_t/3}$  and  $\tau\in [s-r- \tilde{\delta}, s-r+ \tilde{\delta}]^c\cap[0,s-\tilde{\delta}]$.\qquad For this case, we have $|x-y-z|, |x-z|\sim r $ and $||x-y|-(s-\tau)|\sim ||x|-(s-\tau)|$ if $|y|,|z|\leq 2^{-k- l+\epsilon M_t/2 }=\tilde{\delta}2^{-\epsilon M_t/2} $ .    From the estimate  (\ref{march10eqn22}) and the estimates (\ref{march10eqn36}) and (\ref{march10eqn37}), the following estimate holds for fixed $z\in \R^3$ s.t., $|z|\leq \tilde{\delta},$
  \[
  \int_{[0, s-\tilde{\delta} ]\cap [s-r-\tilde{\delta} , s-r+ \tilde{\delta} ]^c}    (s-\tau) F(s,\tau,x,z) d \tau \lesssim  C_n(x, \omega_0) \| m_1\|_{\mathcal{S}^\infty_{k,j,l}}  \big[\int_{[0, s-\tilde{\delta} ]\cap [s-r-\tilde{\delta} , s-r+ \tilde{\delta} ]^c} 
 (s-\tau)\]
\be\label{march10eqn61}
\times  \min\{ \frac{2^{k+j- l/2}(\bar{\theta}_{s-\tau}^l) \bar{\theta}_{s-\tau}^n  }{(r(s-\tau)|r-(s-\tau)|) }, \frac{2^{k+3j- l/2+ n}(\bar{\theta}^l_{s-\tau})^2   (\bar{\theta}_{s-\tau}^n )^{1/2} }{(r(s-\tau)|r-(s-\tau)|)^{1/2}}\} d s  \big].
 \ee
  If $|s-\tau|2^k\geq 2^{-2l} $, i.e., $\bar{\theta}_{s-\tau}^l\leq 2^{l+\epsilon M_t}$, then we use the first estimate in (\ref{march10eqn61}). Meanwhile, if  $|s-\tau|2^k\leq 2^{-2l}$, i.e., $\bar{\theta}_{s-\tau}^l\leq (s-\tau)^{-1/2}2^{-k/2+\epsilon M_t}$, then we use the second estimate in (\ref{march10eqn61}). As a result, we have
  \[ 
   \int_{[0, s-\tilde{\delta} ]\cap [s-r-\tilde{\delta} , s-r+ \tilde{\delta} ]^c}    (s-\tau) F(s,\tau,x,z) d \tau    \lesssim  C_n(x, \omega_0) \| m_1\|_{\mathcal{S}^\infty_{k,j,l}}  2^{k+l/2+n+(1+3 \alpha)M_t} r_{-}^{-1}. 
  \]
  After combining it with the estimate (\ref{march10eqn21}),  we have
\be\label{march11eqn43}
    \sum_{i=1, \cdots, K_n}    \int_{[0, s-\tilde{\delta} ]\cap [s-r-\tilde{\delta} , s-r+ \tilde{\delta} ]^c} \big|  M_{k;j}^{ l,n;ess,i}(K, f)(s,\tau,x) \big| d\tau \lesssim     C_n(x, \omega_0)   \| m_1\|_{\mathcal{S}^\infty_{k,j,l}}  \| m_2\|_{\mathcal{S}^\infty_{k;n,l }}  2^{k+l/2+n+(1+ 3\alpha) M_t} r_{-}^{-1}.
  \ee

 \textbf{Subcase $4$}:\quad   If $r\leq \tilde{\delta}2^{ -\epsilon M_t/3}$  and  $\tau\in [s-r- \tilde{\delta}, s-r+ \tilde{\delta}]^c\cap[0,s-\tilde{\delta}]$.\qquad  For this case, we have $||x-y-z|-(s-\tau)|\sim |s-\tau|$   if $|y|+|z|\leq 2^{-k- l+\epsilon  M_t/2 }=\tilde{\delta}2^{-\epsilon M_t/2} $. Recall (\ref{march10eqn30}).  From the estimates of kernels in (\ref{march10eqn10}) and (\ref{march10eqn12}),  the rapid decay rate of kernel if $|y|\geq 2^{-k- l+\epsilon M_t/2 }=\tilde{\delta}2^{-\epsilon M_t/2} $, the point-wise estimate of electromagnetic field (\ref{march3eqn51}) in Lemma \ref{roughestimateofelectromag}  and the estimate (\ref{march10eqn37}), the following estimate holds,  
  \[
    \sum_{i=1, \cdots, K_n}    \int_{[0, s-\tilde{\delta} ]\cap [ s-r-\tilde{\delta} , s-r+ \tilde{\delta} ]^c} \big|  M_{k;j}^{ l,n;ess,i}(K, f)(s,\tau,x) \big| d\tau \]
    \[
     \lesssim   \| m_1\|_{\mathcal{S}^\infty_{k,j,l}}   \| m_2\|_{\mathcal{S}^\infty_{k;n,l }}  \big[  \int_{|z|\leq  2^{-k-n+\epsilon M/2}}  \int_{[0, s-\tilde{\delta} ]\cap [s-r-\tilde{\delta} , s-r+ \tilde{\delta} ]^c}  2^{3k+2n}   (1+2^k|z\cdot {\omega}_0|)^{-100/\epsilon}(1+2^{k+n}|z\times  {\omega}_0|)^{-100/\epsilon}   \]
 \be\label{march10eqn71}
 \times (s-\tau) \frac{2^{(1+\epsilon)M_t}}{|s-\tau|}    \min\{ \frac{  2^{k } C_n(x, \omega_0)   \bar{\theta}_{s-\tau}^n 2^{-j}  }{|x-z|(s-\tau) | |x-z|-(s-\tau)|},   |K_n| 2^{k}   2^{3j+2l}\big( \bar{\theta}_{s-\tau}^{l}\big)^2\}  d \tau  d z  +2^{-10M_t}\big].
\ee 
If $s-\tau\leq 2^{-k-2l+\alpha M_t }$  then from the second estimate of (\ref{march10eqn71}), we have
\be\label{march11eqn50}
\textup{(\ref{march10eqn71})} \lesssim  \| m_1\|_{\mathcal{S}^\infty_{k,j,l}}   \| m_2\|_{\mathcal{S}^\infty_{k;n,l }}    2^{k+4(1+\epsilon)M_t+2n}\big(2^{-k-2l+\alpha M_t }2^{2l} + 2^{-k} \big) r^{-1}2^{-k-l+\epsilon M_t }  \lesssim r^{-1} 2^{k + l +2 n+6\alpha M_t},
\ee
If $s-\tau\geq 2^{-k-2l+\alpha M_t }$,( i.e., $ \bar{\theta}_{s-\tau}^{l}\sim 2^l$),    and $ 2^{-k-n+2\epsilon M_t}\leq r \leq \tilde{\delta}2^{\epsilon M_t}$,  then we have $|x-z|\sim r$ if $|z|\leq 2^{-k-n+\epsilon M/2} $. From the  geometric mean   of two estimates in  (\ref{march10eqn71}),  we have
\[
\textup{(\ref{march10eqn71})} \lesssim  C_n(x, \omega_0)  \| m_1\|_{\mathcal{S}^\infty_{k,j,l}}  \| m_2\|_{\mathcal{S}^\infty_{k;n,l }}   r^{-1/2} 2^{k+2(1+\epsilon)M_t+l+3n/2}
\]
\be\label{march11eqn52}
\lesssim  C_n(x, \omega_0) \| m_1\|_{\mathcal{S}^\infty_{k,j,l}}  \| m_2\|_{\mathcal{S}^\infty_{k;n,l }}   r^{-1} 2^{k+(1+2\alpha)M_t+l+3n/2}. 
\ee
If $s-\tau\geq 2^{-k-2l+\alpha M_t }$,( i.e., $ \bar{\theta}_{s-\tau}^{l}\sim 2^l$),   and $ r\leq 2^{-k-n+2\epsilon M_t} $,   then from the  geometric mean   of two estimates in  (\ref{march10eqn71}),  we have
\[
\textup{(\ref{march10eqn71})} \lesssim  C_n(x, \omega_0)  \| m_1\|_{\mathcal{S}^\infty_{k,j,l}}   \| m_2\|_{\mathcal{S}^\infty_{k;n,l }}   \int_{|z|\leq 2^{-k-n+2\epsilon M_t}} 2^{3k+2n}  |x-z|^{-1/2} 2^{k+2(1+\epsilon)M_t+l+3n/2}  d z\lesssim  C_n(x, \omega_0)  
\]
\be\label{march11eqn53}
\times    \| m_1\|_{\mathcal{S}^\infty_{k,j,l}}   \| m_2\|_{\mathcal{S}^\infty_{k;n,l }}    2^{k+2(1+\epsilon)M_t+l+3n/2} 2^{k/2-n/2+6\epsilon M_t} \lesssim C_n(x, \omega_0)  \| m_1\|_{\mathcal{S}^\infty_{k,j,l}}  \| m_2\|_{\mathcal{S}^\infty_{k;n,l }}  r^{-1}2^{k+ (1+2\alpha)M_t +3l/2  }.
\ee
To sum up, after combining the estimates (\ref{march11eqn50}--\ref{march11eqn53}),  in whichever case  we have 
\be\label{march11eqn54}
    \sum_{i=1, \cdots, K_n}    \int_{[0, s-\tilde{\delta} ]\cap [ s-r-\tilde{\delta} , s-r+ \tilde{\delta} ]^c} \big|  M_{k;j}^{ l,n;ess,i}(K, f)(s,\tau,x) \big| d\tau \lesssim C_n(x, \omega_0) \| m_1\|_{\mathcal{S}^\infty_{k,j,l}} \| m_2\|_{\mathcal{S}^\infty_{k;n,l }}   r^{-1} 2^{k+(1+3\alpha) M_t+l/2+ n }. 
\ee

 Recall the decomposition (\ref{march11eqn58}). To sum up, after combining the estimates (\ref{march11eqn59}), (\ref{march11eqn15}),  (\ref{march10eqn93}), (\ref{march11eqn43}), and  (\ref{march11eqn54}), we have
\be\label{march11eqn61}
 \int_0^{s-\tilde{\delta}}\big| M_{k;j}^{ l,n}(K, f)(s,\tau,x)\big| d \tau \lesssim  C_n(x, \omega_0) \| m_1\|_{\mathcal{S}^\infty_{k,j,l}}  \| m_2\|_{\mathcal{S}^\infty_{k;n,l }} r_{-}^{-1} 2^{k+(1+3\alpha) M_t+l/2+ n }.
 \ee

 Recall the detailed formulas of $M_{k;j}^{ l,n}(K, f)(s,\tau,x)$ in (\ref{march10eqn11}) and $Err_{k;j}^{ l,n}(K, f)(s,\tau,x)$ in (\ref{march10eqn41})  and the estimate of corresponding kernels in  (\ref{march10eqn10})     and (\ref{march10eqn50}).  Note that we have $s-\tau\geq \tilde{\delta}\geq 2^{-k+\epsilon M_t }$. The smallness of extra $2^{-k}$ comes from the symbol compensate the loss of smallness ``$s-\tau$'' in $Err_{k;j}^{ l,n}(K, f)(s,\tau,x)$. Therefore, with minor modification of the proof of the estimate (\ref{march11eqn61}), we have
 \be\label{march11eqn62}
 \int_0^{s-\tilde{\delta}}\big| Err_{k;j}^{ l,n}(K, f)(s,\tau,x)\big| d \tau \lesssim  C_n(x, \omega_0) \| m_1\|_{\mathcal{S}^\infty_{k,j,l}}  \| m_2\|_{\mathcal{S}^\infty_{k;n,l }}  r_{-}^{-1} 2^{k+(1+3\alpha)M_t +l/2+ n }.
 \ee
Recall the decomposition (\ref{march11eqn71}).  To sum up, our desired estimate (\ref{march10eqn2}) holds from the estimates (\ref{march10eqn15}), (\ref{march11eqn61}), and  (\ref{march11eqn62}).

Moreover,  we obtain a rough point-wise estimate   as a byproduct of the above argument, which will be used when $r$ is extremely small. Recall (\ref{march10eqn1}).  Note that after using the Cauchy-Schwarz inequality for the integration with respect to $\xi$ and the volume of support of $\xi$ and $v$, the following estimate holds
\[
|  T_{k;j}^{ l,n}(K, f)(s,\tau,x) |\lesssim   \| m_1\|_{\mathcal{S}^\infty_{k,j,l}}\| m_2\|_{\mathcal{S}^\infty_{k;n,l }} 2^{(3k+2l)/2} \int_{\R^3}\|K(\tau,x)f(\tau,x,v)\|_{L^2_x } \varphi_j(v)\psi_{\leq n+4}(\angle(-v, \omega_0))  dv
\]
\be\label{march15eqn91}
\lesssim   \| m_1\|_{\mathcal{S}^\infty_{k,j,l}}  \| m_2\|_{\mathcal{S}^\infty_{k;n,l }}  2^{3k/2+3j+2n+l }. 
\ee
Moreover, for any fixed $\tau\in[0,s]$ s.t., $|s-\tau|\geq 10 (r+2^{-k-l+\epsilon  M_t})$, we have   $|x-y-z-(s-\tau)|\sim (s-\tau)$ if   $|y|,|z|\leq 2^{-k-l+\epsilon M_t}$. Therefore, from the the rapid decay rate of kernels  in (\ref{march10eqn10}) and (\ref{march10eqn12}) if $|y|, |z|\geq 2^{-k- l+\epsilon  M_t/2 }=\tilde{\delta}2^{-\epsilon  M_t/2} $, the estimate of good part in (\ref{march11eqn59}),  the point-wise estimate of electromagnetic field (\ref{march3eqn51}) in Lemma \ref{roughestimateofelectromag}, we have
\be\label{march15eqn92}
|  T_{k;j}^{ l,n}(K, f)(s,\tau,x) |\lesssim \| m_1\|_{\mathcal{S}^\infty_{k,j,l}}  \| m_2\|_{\mathcal{S}^\infty_{k;n,l }}   \big[(s-\tau) \frac{2^{(1+\epsilon) M_t}}{|s-\tau|}   |K_n| 2^{k}   2^{3j+2l}\big( \bar{\theta}_{s-\tau}^{l}\big)^2   \big].
\ee
Combining the estimates (\ref{march15eqn91}) and (\ref{march15eqn92}), we have
\[
\int_0^s |  T_{k;j}^{ l,n}(K, f)(s,\tau,x) d \tau |\lesssim \| m_1\|_{\mathcal{S}^\infty_{k,j,l}}  \| m_2\|_{\mathcal{S}^\infty_{k;n,l }}  \big[\int_{[0,s]\cap[s-  10 (r+2^{-k-l+\epsilon M_t}), s+ 10 (r+2^{-k-l+\epsilon  M_t})]}  2^{3k/2+3j+2n+l }  d \tau
\]
\[
+ \int_{[0,s]\cap[s-  10 (r+2^{-k-l+\epsilon  M_t}),s+ 10 (r+2^{-k-l+\epsilon M_t})]^c} (s-\tau) \frac{2^{(1+\epsilon)M_t}}{|s-\tau|}   |K_n| 2^{k}   2^{3j+2l}\big( \bar{\theta}_{s-\tau}^{l}\big)^2  d \tau \big]
\]
\be
\lesssim  \| m_1\|_{\mathcal{S}^\infty_{k,j,l}}  \| m_2\|_{\mathcal{S}^\infty_{k;n,l }}  \big( 2^{3k/2+3j+2n+l +\epsilon M_t}  r + 2^{k+3j+2n+2l+(1+3\epsilon) M_t}\big). 
\ee
Hence finishing the proof of our desired estimate  (\ref{march14eqn100}). 
\end{proof}

\end{document}